\def\internallinenumbers{} %uncomment if no line numbering
\newrobustcmd*{\mytriangle}{\tikz{\draw (0,0) -- (0.2cm,0) -- (0.1cm,0.3cm) -- (0,0);}}
\newcounter{algocounter}
\newenvironment{coloralgorithm}[1][]{\refstepcounter{algocounter}
\begin{tcolorbox}[colback=red!5!white,colframe=red!75!black,adjusted title={Algorithm~\thealgocounter~(Output:~#1})]\internallinenumbers
\vspace{-\baselineskip}
~  \internallinenumbers
}{\end{tcolorbox}}
\newtheorem{theorem}{Theorem}[section]
\newtheorem{definition}[theorem]{Definition}
\newtheorem{proposition}[theorem]{Proposition}
\newtheorem{lemma}[theorem]{Lemma}
\newtheorem{remark}[theorem]{Remark}
\newtheorem{example}[theorem]{Example}
\definecolor{darkgreen}{rgb}{0,0.4,0}
\definecolor{BrickRed}{rgb}{0.65,0.08,0}
\newcommand{\LandauO}{O}
\newcommand{\Landauo}{o}
\newcommand{\E}{\mathbb{E}} 
\newcommand{\N}{\mathbb{N}} 
\newcommand{\Dc}{\mathcal{D}} 
\newcommand{\PR}{\mathbb{P}} 
\newcommand{\R}{\mathbb{R}} 
\newcommand{\Vb}{\mathbb{V}} 
\newcommand{\Prob}{\PR}
\newcommand{\Ac}{\mathcal{A}}
\newcommand{\Pc}{\mathcal{P}}
\newcommand{\Sc}{\mathcal{S}}
\newcommand{\Tc}{\mathcal{T}}
\newcommand{\Yc}{\mathcal{Y}}
\newcommand{\OEIS}[1]{\text{\href{https://oeis.org/#1}{{\small \tt #1}}}} 
\def\GG{\operatorname{GenGamma}}
\def\PGG{\operatorname{GenGammaProd}}
\def\bGamma{{\mathbf \Gamma}}
\def\PGGgen{\operatorname{GenGammaProd}}
\gdef\V{v_m}
\gdef\Yn{X_n}
\gdef\Ynv{E_\Yc(v)}
\def\longversionmode{no}        
\newcommand{\longversion}[2]{\ifthenelse{\equal{\longversionmode}{yes}}{%
#1%
\if\relax\detokenize{#1}\relax\ignorespaces\fi  %kill spaces after if argument is empty
}{%
#2%
\if\relax\detokenize{#2}\relax\ignorespaces\fi  %kill spaces after if argument is empty
}}
\newcommand{\ffac}[2]{{#1}^{\underline{#2}}}
\newcommand{\rfac}[2]{{#1}^{\overline{#2}}}
\newcommand*{\MyDef}{\, \mathcal{L}\,}
\newcommand*{\eqdefU}{\ensuremath{\mathop{\overset{\MyDef}{=}}}}% Unscaled version
\renewcommand*{\=}{\,\mathop{\overset{\MyDef}{\resizebox{\widthof{\eqdefU}}{\heightof{=}}{=}}}\,} %macro1 for "equals in law"
\def\inlaw{\quad \stackrel{\scriptstyle \mathcal L}{\longrightarrow} \quad } %macro2 for "equals in law"
\def\fill{\operatorname{ext}}
\def\ext{\operatorname{ext}}
\def\Ss{|\Sc|}
\newcommand{\addorcid}[1]{\protect\includegraphics[height=3mm]{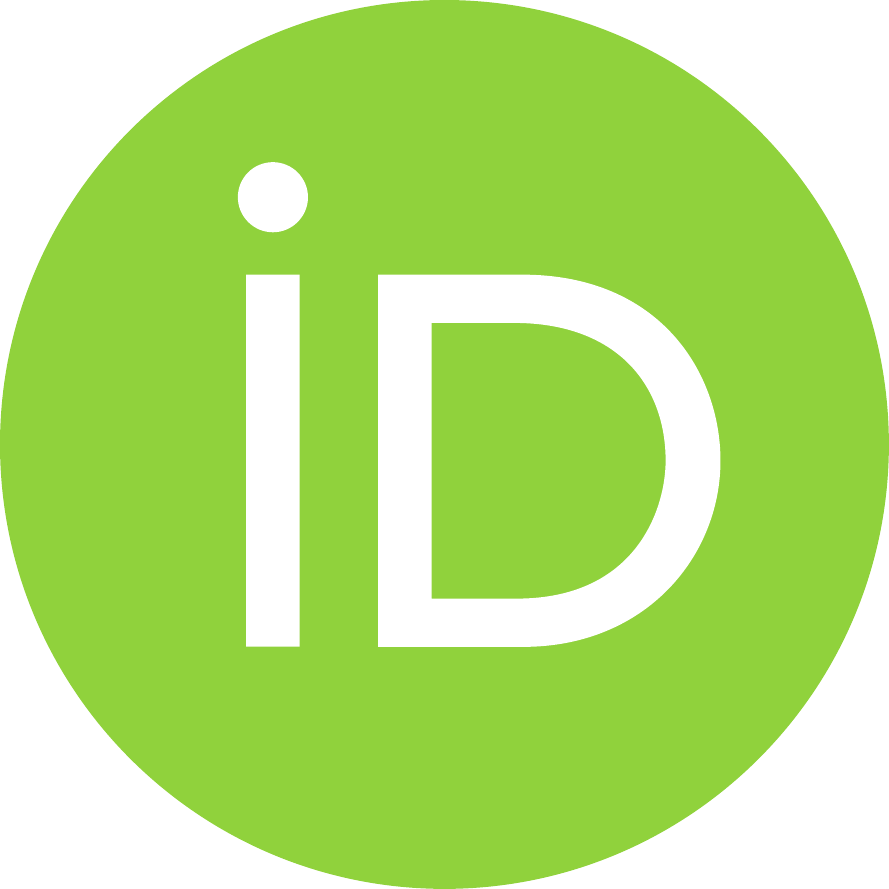} \href{{https://orcid.org/#1}}{#1}}
\newcommand{\addurl}[1]{URL: \url{#1}}
\begin{document}

\begin{frontmatter}

\title{Periodic P\'olya Urns, the Density Method,\\ and Asymptotics of Young Tableaux}
\runtitle{P\'olya Urns and Young Tableaux}

\begin{aug}
  \author{\fnms{Cyril}  \snm{Banderier}\thanksref{m1}
  },
  \author{\fnms{Philippe} \snm{Marchal}\thanksref{m1}
  }
  \and
  \author{\fnms{Michael}  \snm{Wallner}\corref{}\thanksref{m2}\thanksref{t3}
  }

  \thankstext{t3}{Michael Wallner was supported by a MathStic funding from the University Paris Nord September--December 2017 and the Erwin Schr{\"o}dinger Fellowship of the Austrian Science Fund (FWF):~J~4162-N35.}

  \runauthor{C. Banderier, P. Marchal, M. Wallner}

  \affiliation{Universit\'e Paris 13\thanksmark{m1} and Universit\'e de Bordeaux\thanksmark{m2}}
  
\address{Cyril Banderier\\
CNRS; Universit\'e Paris 13, LIPN\\
99, avenue Jean-Baptiste Cl\'ement\\
93430 Villetaneuse, France\\
\addurl{https://lipn.fr/~banderier}\\
\addorcid{0000-0003-0755-3022}}

\address{Philippe Marchal\\
CNRS; Universit\'e Paris 13, LAGA\\
99, avenue Jean-Baptiste Cl\'ement\\
93430 Villetaneuse, France\\
\addurl{https://math.univ-paris13.fr/~marchal/}\\
\addorcid{0000-0001-8236-5713}}

\address{Michael Wallner\\
Universit\'e de Bordeaux, LaBRI\\
351 Cours de la Lib\'eration\\
33405 Talence Cedex, France\\
\addurl{https://dmg.tuwien.ac.at/mwallner/}\\
\addorcid{0000-0001-8581-449X}}
\end{aug}

\begin{abstract}
\centering\begin{minipage}{0.775\textwidth}
\qquad P\'olya urns are urns where at each unit of time a ball is drawn and replaced with some other balls according to its colour.
We introduce a more general model: the replacement rule depends on the colour of the drawn ball \textit{and} the value of the time ($\operatorname{mod} p$). 
We extend the work of Flajolet et~al.~on P\'olya urns: the generating function encoding the evolution of the urn is studied by methods of analytic combinatorics.
We show that the initial  \textit{partial} differential equations lead to \textit{ordinary} linear differential equations which are related 
to
hypergeometric functions (giving the exact state of the urns at time~$n$). 
When the time goes to infinity, we prove that these \textit{periodic P\'olya urns} 
have asymptotic fluctuations which are described by a product of generalized gamma distributions. 
With the additional help of what we call the {\em density method} (a method which offers access to enumeration and random generation of poset structures),
we prove that the law of the south-east corner of a triangular Young tableau follows asymptotically a product 
of generalized gamma distributions.
This allows us to tackle some questions related to the continuous limit of large random Young tableaux and links with random surfaces.
\end{minipage}
\end{abstract}

\begin{keyword}[class=MSC]
\kwd[Primary ]{60C05}
\kwd[; secondary ]{05A15}
\kwd{60F05}
\kwd{60K99}
\end{keyword}

\begin{keyword}
\kwd{P\'olya urn}
\kwd{Young tableau}
\kwd{generating functions}
\kwd{analytic combinatorics}
\kwd{pumping moment}
\kwd{D-finite function}
\kwd{hypergeometric function}
\kwd{generalized gamma distribution}
\kwd{Mittag-Leffler distribution}
\end{keyword}

\end{frontmatter}

%%%%%%%%%%%%%%%%%%%%%%%%%%%%%%%%%%%%%%%%%%%%%%%%%%%%%%%%%%%%%%

{\footnotesize 
\tableofcontents %\newpage
}

\section{Introduction}\label{sec:urn}
\subsection{Periodic P\'olya urns}

\textit{P\'olya urns} were introduced in a simplified version by George P\'olya and his PhD student Florian Eggenberger 
in~\cite{EggenbergerPolya23,EggenbergerPolya28,Polya30}, with applications to disease spreading and conflagrations.
They constitute a powerful model, which regularly finds new applications: see e.g.~Rivest's recent work on auditing elections~\cite{Rivest18}, or the analysis of deanonymization in Bitcoin's peer-to-peer network~\cite{FantiViswanath17}. 
They are well-studied objects in combinatorial and probabilistic literature~\cite{AthreyaNey04,FlajoletGabarroPekari05,Mahmoud09},
because they offer fascinatingly rich links with numerous objects like random recursive trees, $m$-ary search trees, and branching random walks~(see 
e.g.~\cite{BagchiPal85, Janson04,Janson05,ChauvinMaillerPouyanne15}).
In this paper we introduce a variation which leads to new links with another important combinatorial structure: Young tableaux.
What is more, we solve the enumeration problem of this new P\'olya urn model,
derive the limit law for the evolution of the urn, and give some applications to Young tableaux. 

\smallskip

In the \textit{P{\'o}lya urn model}, one starts with an urn with $b_0$ black balls and $w_0$ white balls at time~$0$. 
At every discrete time step one ball is drawn uniformly at random.
After inspecting its colour this ball is returned to the urn. 
If the ball is black, $a$ black balls and $b$ white balls are added; if the ball is white, $c$ black balls and $d$ white balls are added (where $a,b,c,d \in \N$ are non-negative integers). 
This process can be described by the so-called \textit{replacement matrix}:
\begin{align*}
	M &=
		\begin{pmatrix}
			a & b \\
			c & d
		\end{pmatrix},
		\qquad
		a,b,c,d \in \N.
\end{align*}
\indent We call an urn and its associated replacement matrix \textit{balanced} if $a+b = c+d$.
In other words, in every step the same number of balls is added to the urn.
This results in a deterministic number of balls after $n$ steps: $b_0 + w_0 + (a+b) n$ balls.

Now, we introduce a more general model which has rich combinatorial, probabilistic, and analytic properties. 

\begin{definition}
	A \textit{periodic P\'olya urn} of period~$p$ with replacement matrices~${M_1,M_2,\ldots,M_p}$
is a variant of a P\'olya urn in which the replacement matrix $M_k$ is used at steps $np+k$. 
Such a model is called \textit{balanced} if each of its replacement matrices is balanced. 
\end{definition}

For $p=1$, this model reduces to the classical model of P\'olya urns with one replacement matrix. 
In this article, we illustrate the aforementioned rich properties via the following model.

\begin{definition}
	\label{def:youngpolyaurn}
	Let $p,\ell \in \N$. We call a \textit{Young--P\'olya urn of period $p$ and parameter~$\ell$} the periodic P\'olya urn of period~$p$ (with 
$b_0\geq 1$ to avoid degenerate cases) and replacement matrices
\begin{equation*}
M_1= M_2 = \dots = M_{p-1} = 
	\begin{pmatrix}
			1 & 0 \\
			0 & 1
		\end{pmatrix} 
		\text{\ and \ }
		M_p=
		\begin{pmatrix}
			1 & \ell \\
			0 & 1+\ell
		\end{pmatrix}.
\end{equation*}
\end{definition}

\begin{example}
Consider a Young--P{\'o}lya urn with parameters $p=2$, $\ell=1$, and initial conditions $b_0=w_0=1$.
The replacement matrices are~$M_1:=
\begin{pmatrix}
	1 & 0 \\
	0 & 1
\end{pmatrix}$ for every odd step, and~$M_2:=
\begin{pmatrix}
	1 & 1 \\
	0 & 2
\end{pmatrix}$ for every even step.
This case was analysed by the authors in the extended abstract~\cite{BanderierMarchalWallner18}.
In the sequel, we will use it as a running example to explain our results.

Let us illustrate the evolution of this urn in Figure~\ref{fig:youngurnevolution}.
Each node of the tree corresponds to the current composition of the urn 
(number of black balls, number of white balls).
One starts with $b_0=1$ black ball and $w_0=1$ white.
In the first step, the matrix $M_1$ is used and leads to two different compositions. 
In the second step, matrix $M_2$ is used,
in the third step, matrix $M_1$ is used again, in the fourth step, matrix $M_2$, etc.
Thus, the possible compositions are
	$(2,1)$ and $(1,2)$ at time $1$,
	$(3,2), (2,3)$ and $(1,4)$ at time~$2$, 
	$(4,2), (3,3), (2,4)$ and $(1,5)$ at time $3$. 
	\end{example}

\begin{figure}[!ht]
		\begin{center}	
\begin{flushleft}
\begin{tabular}{@{}ll@{}}
\begin{tabular}{@{}l@{}}	\includegraphics[width=.65\textwidth]{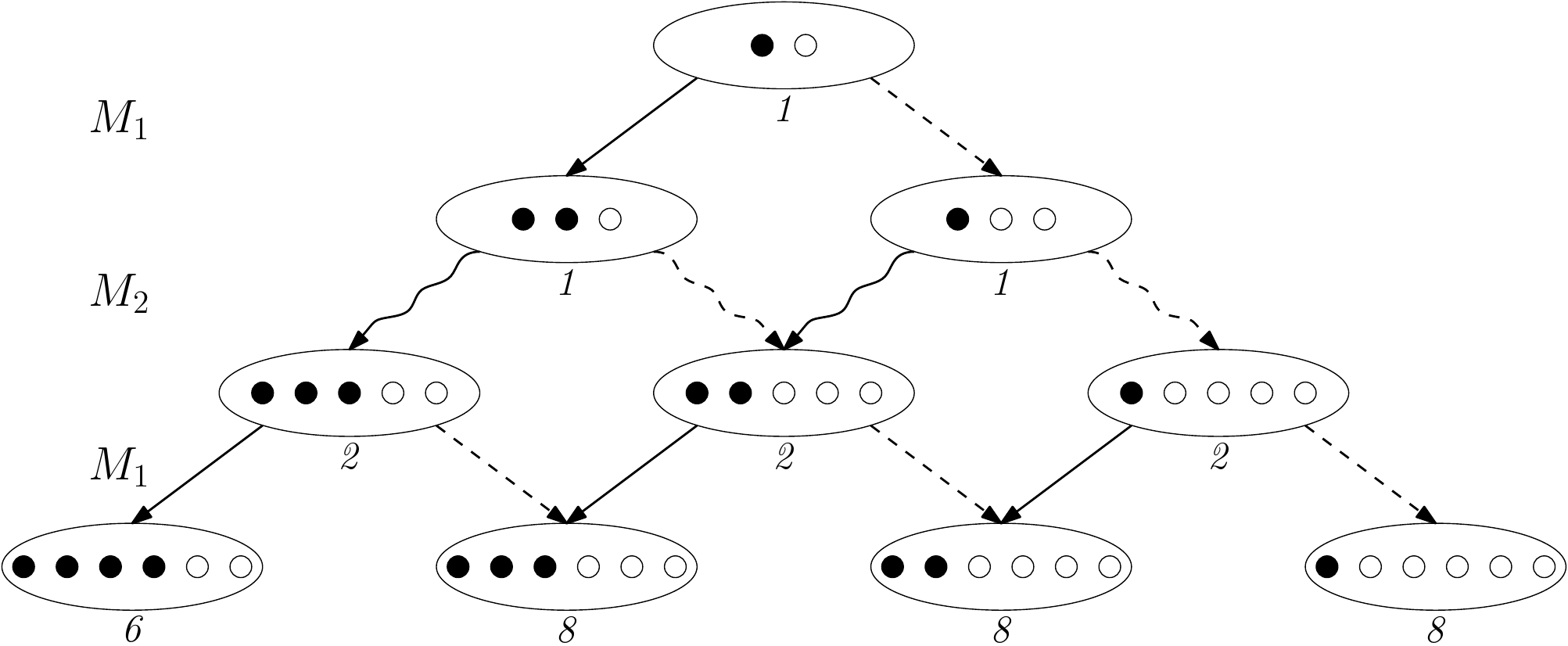} \end{tabular}
&	\hspace{-3.3mm} 		
\scalebox{.99}{\begin{tabular}{@{}l@{}}\\[0mm]
		 $h_0=xy$\\[7mm]
 $h_1=x^2y+xy^2$\\[7mm]
 $h_2=2x^3y^2+2x^2y^3+2xy^4$\\[7mm]
 $h_3=6x^4y^2+8x^3y^3+8x^2y^4+8xy^5$ \\[1cm]
			\end{tabular}} 
			\end{tabular}
			\end{flushleft}\internallinenumbers
			\setlength{\abovecaptionskip}{-3mm}
			\setlength{\belowcaptionskip}{-3mm}
			\caption{The evolution of the Young--P\'olya urn with period $p=2$ and parameter~$\ell=1$ with one initial black and one initial white ball. 
			Black arrows mark that a black ball was drawn, dashed arrows mark that a white ball was drawn. 
			Straight arrows indicate that the replacement matrix $M_1$ was used, curly arrows show that the replacement matrix $M_2$ was used.
			The number below each node is the number of possible transitions to reach this state. 
			In this article we give a formula for $h_n$ (which encodes all the possible states of the urn at time $n$) 
			and the corresponding asymptotic behaviour.}
			\label{fig:youngurnevolution}
		\end{center}
\end{figure}

\pagebreak 

In fact, each of these states may be reached in different ways, and such a sequence of transitions is called a \textit{history}.
(Some authors also call it a \textit{scenario}, an \textit{evolution}, or a \textit{trajectory}.) 
Each history comes with weight one. Implicitly, they induce a probability measure on the states at step $n$.
So, let $B_n$ and $W_n$ be random variables for the number of black and white balls after $n$ steps, respectively. 
As our model is balanced, $B_n + W_n$ is a deterministic process, reflecting the identity 
\begin{equation}B_n + W_n = b_0 + w_0 + n + \ell \left\lfloor\frac{n}{p}\right\rfloor.\end{equation}
So, from now on, we concentrate our analysis on $B_n$.

\subsection{The generalized gamma product distribution}

For the classical model of a single balanced P{\'o}lya urn, the limit law of the random variable $B_n$ is fully known:
the possible limit laws include a rich variety of distributions. 
To name a few, let us mention 
the uniform distribution~\cite{FlajoletDumasPuyhaubert06}, 
the normal distribution~\cite{BagchiPal85}, and the beta and Mittag-Leffler distributions~\cite{Janson04,Janson06}. 
Now, periodic P\'olya urns (which include the classical model) lead to an even larger variety of distributions 
involving a product of \textit{generalized gamma distributions}~\cite{Stacy62}. 

\begin{definition}\label{def:GG} 
	The generalized gamma distribution $\GG(\alpha,\beta)$ with real parameters $\alpha,\beta>0$ is defined on $(0,+\infty)$ by the density function 
	\begin{align*}
		f(t; \alpha,\beta) &:= \frac{ \beta \, t^{\alpha-1} \exp(-t^\beta)}{\Gamma\left(\alpha/\beta\right)},
	\end{align*}
	where $\Gamma$ is the classical gamma function $\Gamma(z):= \int_0^\infty t^{z-1}\exp(-t) \, dt$.
\end{definition}

The fact that $f(t; \alpha,\beta)$ is indeed a probability density function
can be seen by a change of variable $t\mapsto t^\beta$ in the definition of the $\Gamma$ function, or via the following link.

\begin{remark}	
	Let ${\mathbf \Gamma}(\alpha)$ be the gamma distribution\footnote{Caveat: it is traditional to use the same letter for both the $\Gamma$ function and the $\mathbf \Gamma$ distribution.
Also, some authors add a second parameter to the $\mathbf \Gamma$ distribution, which is set to $1$ here.} of parameter $\alpha>0$, 
given on $(0,+\infty)$ by 
	\begin{align*}	
		g(t; \alpha) &= \frac{t^{\alpha-1} \exp(-t)}{\Gamma(\alpha)}.
	\end{align*}	
Then, one has ${\mathbf \Gamma}(\alpha)\= \GG(\alpha,1)$ and, for $r>0$, 
the distribution of the $r$-th power
of a random variable distributed according to ${\mathbf \Gamma}(\alpha)$ is 
\begin{equation*}{\mathbf \Gamma}(\alpha)^r\=\GG(\alpha/r,1/r).\end{equation*}
\end{remark}

The limit distribution of our urn models is then expressed as a product of such generalized gamma distributions.
We prove in Theorem~\ref{ProdGenGammaGeneral} a more general version of the following:

\begin{theorem}[The generalized gamma product distribution $\PGG$ for Young--P\'olya urns] \label{theo:PGG}
The renormalized distribution of black balls in a Young--P\'olya urn of period~$p$ and parameter~$\ell$
is asymptotically for $n \to \infty$ given by the following product of distributions:
\begin{equation}\label{ProdGenGamma}
\frac{p^\delta}{p+\ell} \frac{B_n }{n^\delta}\inlaw \operatorname{Beta}(b_0,w_0) \prod_{i=0}^{\ell-1} \GG(b_0+w_0+p+i, p+\ell),
\end{equation}
with $\delta=p/(p+\ell)$, and $\operatorname{Beta}(b_0,w_0)=1$ when $w_0=0$ or 
$\operatorname{Beta}(b_0,w_0)$ is the beta distribution with support $[0,1]$ and density $\frac{\Gamma(b_0+w_0)}{\Gamma(b_0)\Gamma(w_0)} x^{b_0-1} (1-x)^{w_0-1}$ otherwise.
\end{theorem}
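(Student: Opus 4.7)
The approach is the method of moments. The plan is to derive a closed-form expression for $\mathbb{E}[B_n^s]$ via the generating-function machinery that the paper develops, extract its $n\to\infty$ asymptotics, recognise the resulting constants as the $s$-th moments of the claimed product distribution, and then upgrade convergence of moments to convergence in law by moment determinacy. This last step is routine: each $\GG(\alpha,\beta)$ with $\beta\geq 1$ satisfies Carleman's condition (in our setting $\beta=p+\ell\geq 1$), and the bounded Beta factor cannot spoil this.

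For the moment computation I would exploit the balancedness and the history generating function $h_n(x,y)$ illustrated in the running example; its evaluation under the differential operator $(x\partial_x)^s$ at $x=y=1$ recovers $h_n(1,1)\,\mathbb{E}[B_n^s]$. Grouping the steps in blocks of $p$ (namely $p-1$ identity replacements followed by the non-trivial $M_p$), the PDE-to-ODE reduction announced in the introduction yields, along the sub-sequence $n=mp$, a hypergeometric representation of $h_n(x,y)$ whose coefficients are Pochhammer symbols $(a)_m$ with shifts depending on $b_0$, $w_0$, $p$, and $\ell$. The resulting expression for $\mathbb{E}[B_n^s]$ is then a finite product of such Pochhammer symbols, and Stirling applied to each factor $(a)_m=\Gamma(a+m)/\Gamma(a)$ yields a limit of the form $\lim_{n\to\infty}\mathbb{E}[(B_n/n^\delta)^s]=:\mu_s$, an explicit ratio of gamma values whose exponent of $n$ equals $s\delta=sp/(p+\ell)$.

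To identify $\mu_s$ with the $s$-th moment of $\operatorname{Beta}(b_0,w_0)\prod_{i=0}^{\ell-1}\GG(b_0+w_0+p+i,p+\ell)$ rescaled by $(p+\ell)/p^\delta$, I combine independence with the elementary formulas
\[ \mathbb{E}[\GG(\alpha,\beta)^s] = \frac{\Gamma((\alpha+s)/\beta)}{\Gamma(\alpha/\beta)}, \qquad \mathbb{E}[\operatorname{Beta}(b_0,w_0)^s] = \frac{\Gamma(b_0+s)\,\Gamma(b_0+w_0)}{\Gamma(b_0)\,\Gamma(b_0+w_0+s)}. \]
The match should fall out of the Gauss multiplication theorem applied with $\beta=p+\ell$: this is precisely what turns a single Pochhammer symbol with step $p+\ell$ into an $\ell$-fold $\GG$ product, while the residual Beta factor records the classical identity-replacement phase $M_1,\ldots,M_{p-1}$, which in isolation is a standard Pólya urn with Beta limit.

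The main obstacle is the bookkeeping in the first step: arranging $\mathbb{E}[B_n^s]$ in the exact factorised shape required for the Gauss multiplication formula demands careful tracking of both the residue $n\bmod p$ and the non-commuting actions of the two differential operators attached to $M_1$ and $M_p$ over one full period. Once the moment formula is written in that shape, the asymptotic identification and the appeal to moment determinacy are essentially automatic.
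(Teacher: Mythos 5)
Your plan is the paper's own: method of moments, hypergeometric closed forms extracted from the history generating function, Stirling, the Gauss multiplication formula to recognise the Beta and $\ell$-fold $\GG$ factors, and Carleman plus Fr\'echet--Shohat to upgrade moment convergence to convergence in law (the paper works with factorial moments $\partial_x^r H|_{x=1}$ rather than $(x\partial_x)^s$, but that is cosmetic).

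There is, however, one step that fails as written. You assert that the exact expression for the $s$-th moment ``is a finite product of Pochhammer symbols.'' This is true only for $s=0,1$. For $s\geq 2$ the coefficient extraction from the PDE system gives the recurrence
$h_{n+1}^{(r)} = \bigl((1+\tfrac{\ell}{p})n + b_n + r\bigr)h_n^{(r)} + r(r-1)\,h_n^{(r-1)}$,
which is \emph{inhomogeneous}: the exact $r$-th factorial moment is not a pure gamma-quotient but a linear combination $C_r\,h^{(r)}_{n,\mathrm{hom}} - \sum_{j<r} L(r,j)\,h_n^{(j)}$ of the lower-order moments (the $L(r,j)$ turn out to be Lah numbers, and the constant $C_r$ is pinned down by the initial condition, producing the rising factorial $\rfac{b_0}{r}$ in the limit). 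So before ``Stirling applied to each factor'' can be invoked, one must solve this inhomogeneous recurrence and prove that the particular solution is of relative order $O(n^{-\delta})$ compared with the homogeneous one --- this is the actual technical core of the paper's Section~3, and it is not subsumed by the ``bookkeeping of residues mod $p$ and non-commuting operators'' you flag as the main obstacle. Once that lemma is in place, the rest of your argument (Gauss multiplication, the componentwise Carleman check --- which is safest done directly on the explicit limit moments $m_r\sim r^{(1-\delta)r}$, since determinacy is not generally preserved under products) goes through exactly as in the paper.
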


In the sequel, we call this distribution the \textit{generalized gamma product distribution} and denote it by $\PGG(p,\ell,b_0,w_0)$.
We will see in Section~\ref{sec:Moments} that this distribution is characterized by its moments, which have a nice factorial shape given in Formula~\eqref{m_r}.

\begin{example}
	\label{ex:YoungPolyaMainresult}
	In the case of the Young--P\'olya urn with $p=2$, $\ell=1$, and $w_0=b_0=1$, one has $\delta=2/3$. Thus, the previous result shows that the number of black balls
	converges in law to a generalized gamma distribution:
		\begin{align*}
\frac{2^{2/3}}{3} \frac{B_n}{n^{2/3}} & \inlaw \operatorname{Unif}(0,1) \cdot \GG(4,3) \quad = \quad \GG\left(1, 3 \right).\end{align*}
See Section~\ref{duality} and~\cite[Proposition~4.2]{Dufresne10} for more identities of this type.
\end{example}

\begin{remark}[Period one]\label{rem:p1}
When $p=1$, our results recover a classical (non-periodic) urn behaviour.	By~\cite[Theorem~1.3]{Janson06} the renormalization for the limit distribution of $B_n$ in an urn with replacement matrix 
	$\begin{pmatrix}
	1 & \ell \\
	0 & 1+\ell
\end{pmatrix}$
	is equal to $n^{-1/(1+\ell)}$. 
	For $\ell=0$ the limit distribution is the uniform distribution, whereas for $\ell=1$ it is a Mittag-Leffler distribution (see~\cite[Example~3.1]{Janson06}, \cite[Example~7]{FlajoletDumasPuyhaubert06}),
and even simplifies to a half-normal distribution\footnote{See~\cite{Wallner16} for other occurrences of the half-normal distribution in combinatorics.}
 when $b_0=w_0=1$.
	Thus, the added periodicity by using this replacement matrix only every $p$-th round and otherwise P\'olya's replacement matrix
	$\begin{pmatrix}
	1 & 0 \\
	0 & 1
	\end{pmatrix}$
	changes the renormalization to $n^{-p/(p+\ell)}$.
\end{remark}

\pagebreak 

	The rescaling factor $n^{-\delta}$ with $\delta=p/(p+\ell)$ on the left-hand side of~\eqref{ProdGenGamma} can also be obtained via a martingale computation. The true challenge is to get exact enumeration and the limit law.
It is interesting that there exist other families of urn models exhibiting the same rescaling factor, 
	however, these alternative models lead	to different limit laws.
	\smallskip
	\begin{itemize}
		\item A first natural alternative model consists in averaging
		 the $p$ replacement matrices. This leads to a classical triangular P\'olya urn model. 
		 The asymptotics is then 
		 \begin{align}\label{eq:deltadifferentmodels}
		 	\frac{B_n}{n^{\delta}} \inlaw \mathfrak{B}, 
		 \end{align}
		 
		 \noindent where the distribution of $\mathfrak{B}$ is e.g.~analysed by Flajolet et~al.~\cite{FlajoletDumasPuyhaubert06} via an analytic combinatorics approach, or by Janson~\cite{Janson06} and Chauvin et~al.~\cite{ChauvinMaillerPouyanne15} via a probabilistic approach relying on a continuous-time embedding introduced by Athreya and Karlin~\cite{AthreyaKarlin68}.		 
		 For example, averaging the Young--P\'olya urn with $p=2$, $\ell=1$, and $b_0=w_0=1$ leads to the replacement matrix 
		$
			\begin{pmatrix}
				1 & 1/2 \\
				0 & 3/2 
			\end{pmatrix}.
		$
The corresponding classical urn model leads to a limit distribution with moments given e.g.~by Janson in~\cite[Theorem~1.7]{Janson06}:
	\begin{align*}
		\E ({\mathfrak B}^r ) = \frac{\Gamma(4/3) \, r!}{\Gamma(2r/3+4/3)}.
	\end{align*}
	 Comparing these moments with the moments of our distribution
(Equation~\eqref{m_r} hereafter) proves that these two distributions are distinct. 
However, it is noteworthy that they have similar tails: we discuss this universality in Section~\ref{defTails}.
		 \smallskip 
		 
		\item Another interesting alternative model, called multi-drawing P\'olya urn model, consists in drawing multiple balls at once; see Lasmar et~al.~\cite{LasmarMaillerSelmi18} or 
		Kuba and Sulzbach~\cite{KubaSulzbach17}.
		Grouping $p$ units of time into one drawing leads to a new replacement matrix. 
		For example, for $p=2$ and $\ell=1$ we can approximate a Young--P\'olya urn by an urn where at each unit of time $2$ balls are drawn uniformly at random. 
		If both of them are black we add $2$ black balls and $1$ white ball, if one is black and one is white we add $1$ black and $2$ white ball, and if both of them are white we add $3$ white balls.
		Then, the same convergence as in Equation~\eqref{eq:deltadifferentmodels} holds, yet again with a different limit distribution, 
as can be seen by comparing the means and variances; compare Kuba and Mahmoud~\cite[Theorem~1]{KubaMahmoud15} with our Example~\ref{ex:varYP}. 
	\end{itemize}
		
		\smallskip
For all these alternative models, the corresponding histories are inherently different: 
none of them gives the exact generating function of periodic P\'olya urns 
nor gives the closed form of the underlying distribution. 
This also motivates the exact and asymptotic analysis of our periodic model, 
which therefore enriches the urn world with new special functions. 

\begin{figure}[!ht]
	\begin{minipage}{0.49\textwidth}
		\centering
		\includegraphics[width=.94\textwidth]{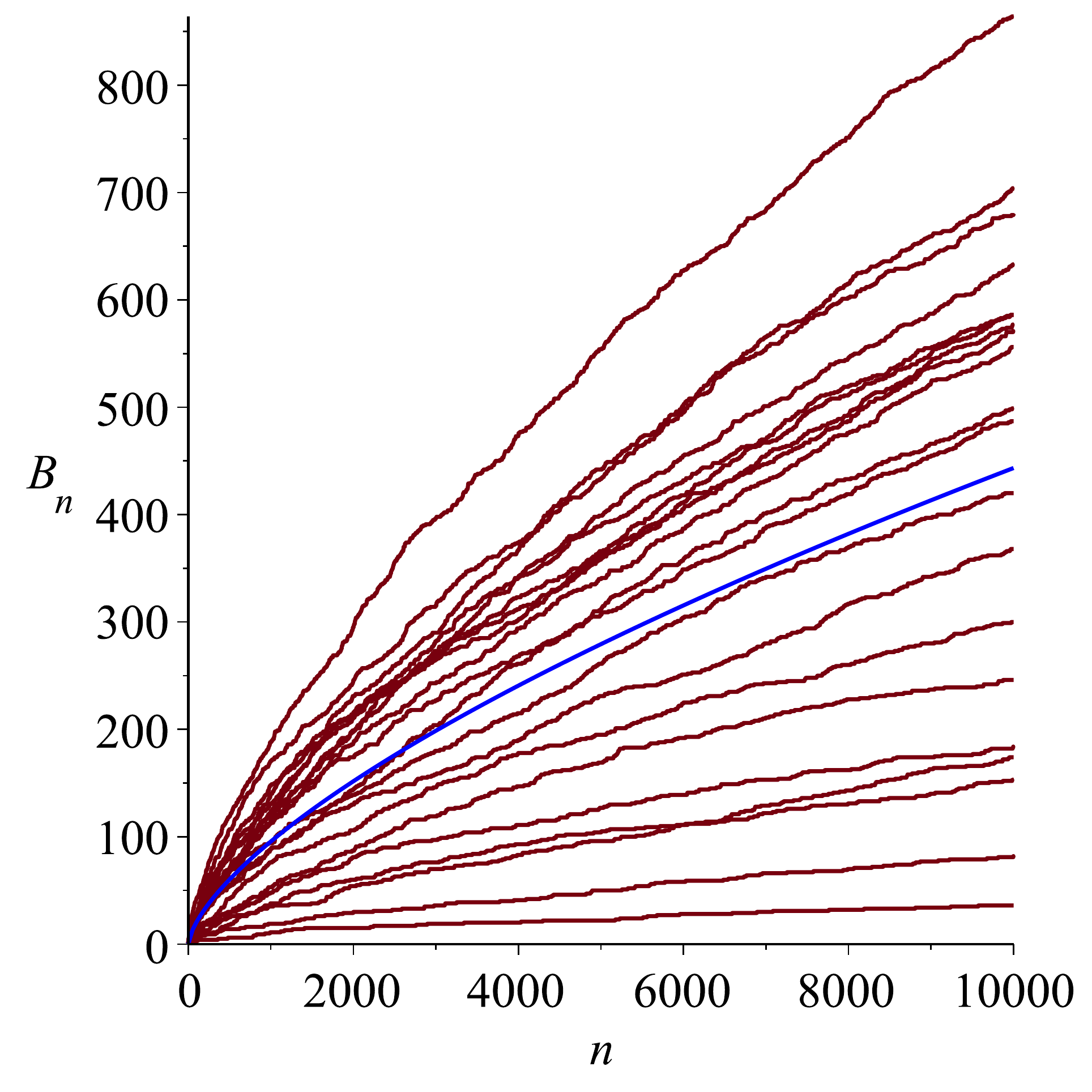}
	\end{minipage}
	\begin{minipage}{0.49\textwidth}
		\centering
		\includegraphics[width=.94\textwidth]{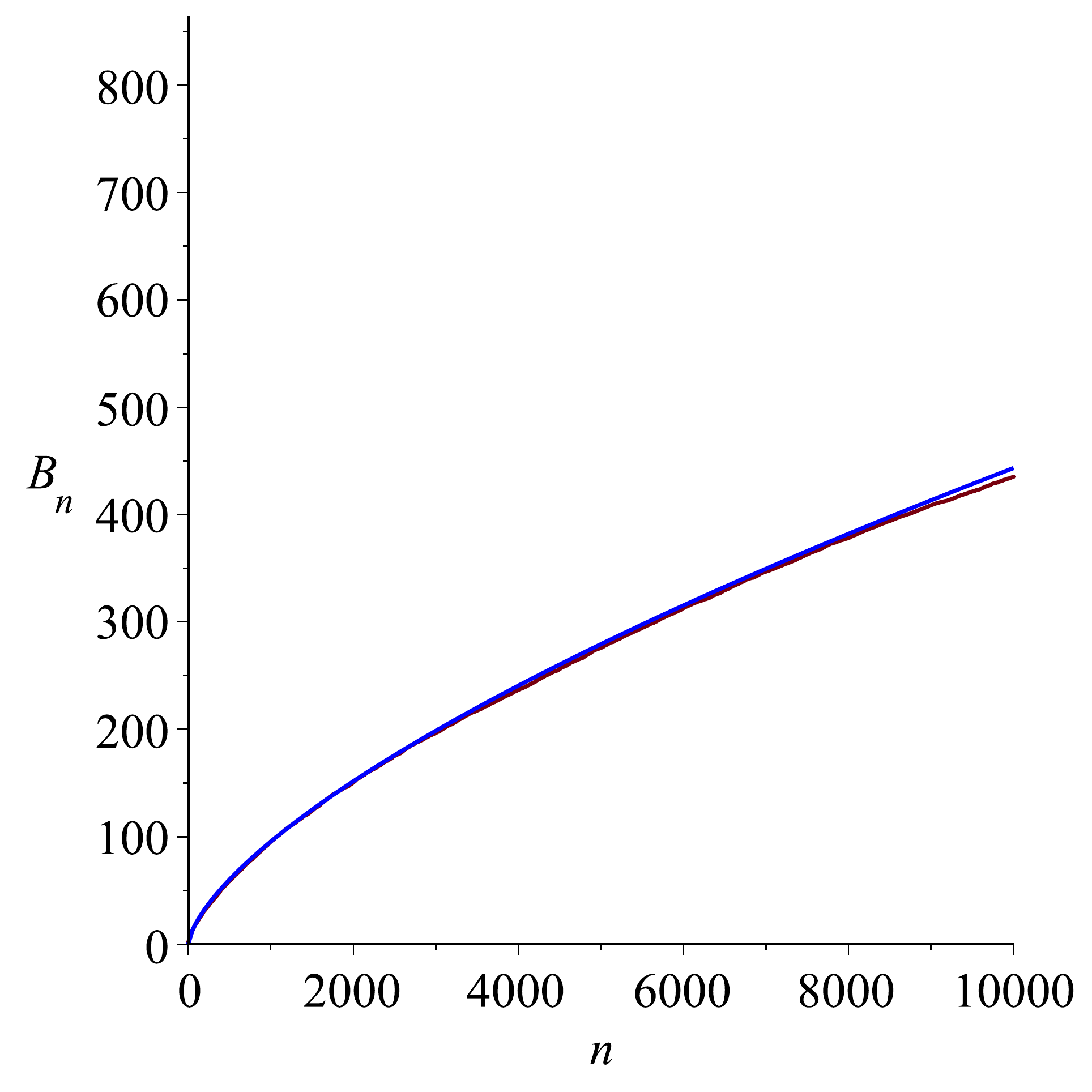}
	\end{minipage}
\internallinenumbers
	\caption{Left: $20$ simulations (drawn in red) of the evolution of $B_n$, the number of black balls in the Young--P\'olya urn with period $p=2$ and parameter~$\ell=1$
(first $10000$ steps, with initially $b_0=1$ black and $w_0=1$ white balls), and the mean $\E(B_n)$ (drawn in blue).
Right: the average (in red) of the $20$ simulations, fitting neatly (almost indistinguishable!) the limit curve $\E(B_n)=\Theta(n^{2/3})$ (in blue). 
} 
	\label{fig:randomgen}
\end{figure}

\pagebreak

Figure~\ref{fig:randomgen} shows that the distribution of $B_n$ is spread; this is consistent with our result 
that the standard deviation and the mean $\E(B_n)$ (drawn in blue) have the same order of magnitude\footnote{ 
The classical urn models with replacement matrices being either $M_1$ or $M_2$
	also have such a spread; see~\cite[Figure~1]{FlajoletDumasPuyhaubert06}.}.
The fluctuations around this mean 
are given by the generalized gamma product limit law from Equation~\eqref{ProdGenGamma},
as proven in Section~\ref{sec:Moments}. Let us first mention some articles where this distribution has already appeared before:
\begin{itemize}
	\item in Janson~\cite{Janson10}, as an instance of distributions with moments of gamma type, 
	like the distributions occurring for the area of the supremum process of the Brownian motion;
	
	\smallskip 
 	\item in Pek\"oz, R\"ollin, and Ross~\cite{PekoezRoellinRoss16}, as distributions of processes on walks, trees, urns, and preferential attachments in graphs, where these authors also consider what they call a P\'olya urn with immigration, 
which is a special case of a periodic P\'olya urn (other models or random graphs 
have these distributions as limit laws~\cite{Senizergues19,BubeckMosselRacz15});
	
	\smallskip 
	\item in Khodabin and Ahmadabadi~\cite{KhodabinAhmadabadi10} following a tradition to 
 generalize special functions by adding parameters in order to capture several probability distributions, such as e.g.~the normal, Rayleigh, and half-normal distribution, as well as the MeijerG function (see also the addendum of~\cite{Janson10}, mentioning a dozen
other generalizations of special functions).
\end{itemize}

\subsection{Plan of the article}
Our \emph{main results} are the explicit enumeration results
and links with hypergeometric functions
(Theorems~\ref{theo:DiffEq} 
and~\ref{theo_hypergeometric}), 
and the limit law involving a product of generalized gamma distributions
(Theorem~\ref{ProdGenGammaGeneral}, 
or the simplified version of it given for readability in Theorem~\ref{theo:PGG} above). 
It is a nice cherry on the cake that
this limit law also describes the fluctuations of the south-east\footnote{In this article, 
we use the French convention to draw the Young tableaux; see Section~\ref{sec:Tableaux} and~\cite{Macdonald15}.} corner of a random triangular Young tableau (as proven in Theorem~\ref{TheoremCornerGen}).
We believe that the methods used, 
i.e.~the generating functions for urns (developed in Section~\ref{Sec2}),
the way to access the moments (developed in Section~\ref{sec:Moments}),
and the density method for Young tableaux (developed in Section~\ref{sec:Tableaux})
are an original combination of tools, which should find many other applications in the future.
Finally, Section~\ref{Sec5} gives a relation between the south-east and the north-west corners of triangular Young tableaux
(Proposition~\ref{prop:factorizationOfGamma}) and a link with factorizations of gamma distributions.
Additionally, we discuss some universality properties of random surfaces,
and we show to what extent the tails of our distributions are related to the tails of 
Mittag-Leffler distributions (Theorem~\ref{prop:tailsML}), 
and when they are subgaussian (Proposition~\ref{prop:tailsSubgaussian}).

\smallskip

In the next section, we translate the evolution of the urn into the language of generating functions by encoding the dynamics of this process into partial differential equations.

\section{A functional equation for periodic P\'olya urns}\label{Sec2}
\subsection{Urn histories and differential operators}\label{Sec2.1}
Let $h_{n,b,w}$ be the number of histories of a periodic P\'olya urn after~$n$ steps with~$b$ black balls and~$w$ white balls, with an initial state of $b_0$ black 
and $w_0$ white balls. We define the polynomials
\begin{align*}
	h_{n}(x,y):= \sum_{b,w \geq 0} h_{n,b,w} x^b y^{w}.
\end{align*}
Note that these are indeed polynomials as there is just a finite number of histories after $n$ steps. 
Due to the balanced urn model these polynomials are homogeneous.
We collect all these histories in the trivariate exponential generating function
\begin{equation} \label{def_H}
	H(x,y,z):= \sum_{n \geq 0} h_n(x,y) \frac{z^n}{n!}.
\end{equation}

\begin{example}
For the Young--P\'olya urn with $p=2$, $\ell=1$, and $b_0=w_0=1$, we get for the first three terms of $H(x,y,z)$ the expansion (compare Figure~\ref{fig:youngurnevolution})\vspace{-3mm}
\begin{minipage}{.99\textwidth}
\begin{align*}
	H(x,y,z) = xy + \left( xy^2 + x^2y \right) z
	 + \left( 2xy^4 + 2x^2 y^3 + 2x^3 y^2 \right) \frac{z^2}{2} 
								+ \dots 
\end{align*}
\end{minipage}\vspace{-4mm}\quad\flushright\qquad                 
\end{example}

In this section, our goal is to derive a partial differential equation describing the evolution of the periodic P\'olya urn model. 
 
The periodic nature of the problem motivates to split the number of histories into $p$ residue classes. 
Let $H_{0}(x,y,z),H_{1}(x,y,z),\ldots,H_{p-1}(x,y,z)$ be the generating functions of histories after $0,1,\ldots,p-1$ draws modulo $p$, respectively. In particular, we have 
\begin{align*}
	H_i(x,y,z) &:= \sum_{n \geq 0} h_{pn+i}(x,y) \frac{z^{pn+i}}{(pn+i)!}, 
\end{align*}
for $i=0,1,\ldots,p-1$ such that
\begin{align*}
	H(x,y,z) &= H_{0}(x,y,z) + H_{1}(x,y,z) + \dots + H_{p-1}(x,y,z).
\end{align*}

Next, we associate with 
 the two distinct replacement matrices
\begin{equation*}\begin{pmatrix}
	1 & 0 \\
	0 & 1
\end{pmatrix}
\text{\qquad and \qquad }
\begin{pmatrix}
	1 & \ell \\
	0 & 1+\ell
\end{pmatrix}\end{equation*}
from Definition~\ref{def:youngpolyaurn} the differential operators $\Dc_1$ and $\Dc_2$, respectively. We get
\begin{equation*}
	\Dc_1:= x^2 \partial_x + y^2 \partial_y \text{\qquad and \qquad}
	\Dc_2:= y^\ell \Dc_1,
\end{equation*}
where $\partial_x$ and $\partial_y$ are defined as the partial derivatives $\frac{\partial}{\partial x}$ and $\frac{\partial}{\partial y}$, respectively. 
This models the evolution of the urn. For example, in the term $x^2\partial_x$, the derivative $\partial_x$ represents drawing a black ball and the multiplication by $x^2$ returning this black ball and an additional black ball into the urn. 
The other terms have analogous interpretations. 

With these operators we are able to link the consecutive drawings with the following system
\begin{align}
	\label{eq:funceqgen1}
	\begin{cases}
	\partial_z H_{i+1}(x,y,z) = \Dc_{1} H_i
	(x,y,z), & \text{ for } i = 0,1,\ldots,p-2,\\
	\partial_z H_0(x,y,z) = \Dc_2 H_{p-1}(x,y,z).
	\end{cases}
\end{align}
Note that the derivative $\partial_z$ models the evolution in time.
We see two types of transitions: in the first $p-1$ rounds the urn behaves like a normal P\'olya urn, but in the $p$-th round we additionally add $\ell$ white balls. The first transition type is modelled by the $\Dc_1$ operator and the second type by the $\Dc_2$ operator.
This system of partial differential equations naturally corresponds 
to recurrences on the level of coefficients $h_{n,b,w}$, and \textit{vice versa}.
This philosophy is well explained in the \textit{symbolic method} part of~\cite{FlajoletSedgewick09}
(see also~\cite{FlajoletGabarroPekari05,FlajoletDumasPuyhaubert06,MorcretteMahmoud12,HwangKubaPanholzer07} for examples of applications to urns).

As a next step, we want to eliminate the $y$ variable in these equations. This is possible as the number of balls in each round and the number of black and white balls are connected due to the fact that we are dealing with balanced urns. 
As observed previously, one has
\begin{align}
	\label{eq:ballnumber}
\text{number of balls after $n$ steps} =	s_0 + n + \ell \left\lfloor \frac{n}{p} \right\rfloor,
\end{align}
with $s_0 := b_0 + w_0$ being the number of initial balls. 
Therefore, for any $x^b y^{w} z^n$ appearing in $H(x,y,z)$, 
we have
\begin{align*}
		b + w &= s_0 + n + \ell \frac{n-i}{p} \qquad \text{ if $n \equiv i \mod p$,}
\end{align*}
which directly translates into the following system of equations (for $i=0,\dots,p-1$)
\begin{align}
	\label{eq:funceqgen2}	\quad
	x\partial_x H_i(x,y,z) + y \partial_y H_i(x,y,z) &= \left(1 + \frac{\ell}{p}\right) z\partial_z H_i(x,y,z) + \left(s_0-\frac{i \ell}{p}\right) H_i(x,y,z).
\end{align}
These equations are contractions in the metric space of formal power series in $z$ (see e.g.~\cite{BanderierDrmota15} or~\cite[Section A.5]{FlajoletSedgewick09}), 
so, given the initial conditions $[z^0] H_i(x,y,z)$, 
the Banach fixed-point theorem entails that this system has a unique solution:
our set of generating functions. 
Now, because of the deterministic link between the number of black balls and the number of white balls,
it is natural 
to introduce the shorthands $H(x,z) := H(x,1,z)$ and $H_i(x,z) := H_i(x,1,z)$. 
What is the nature of these functions? This is what we tackle now.

\subsection{D-finiteness of history generating functions}

Let us first give a formal definition of the fundamental concept of D-finiteness.
\begin{definition}[D-finiteness]
A power series $F(z) = \sum_{n \geq 0} f_n z^n$ with coefficients in some ring~$\mathbb{A}$ 
is called \emph{D-finite} if it satisfies a linear differential equation $L.F(z) =0$, where $L\neq 0$ is a differential operator, $L\in \mathbb{A}[z,\partial_z]$. Equivalently, 
the sequence $(f_n)_{n\in\N}$ is called \emph{P-recursive}: it
satisfies a linear recurrence with polynomial coefficients in $n$. 
Such functions and sequences are also sometimes called \emph{holonomic}.
\end{definition}

D-finite functions are ubiquitous in combinatorics, computer science, probability theory, number theory, 
physics, etc.; see e.g.~\cite{AbramowitzStegun83} or \cite[Appendix~B.4]{FlajoletSedgewick09}.
They possess closure properties galore; this provides an ideal framework for handling (via computer algebra) 
sums and integrals involving such functions~\cite{PetkovsekWilfZeilberger96,BostanChyzakGiustiLebretonLecerfSalvySchost17}. 
The same idea applies to a full family of linear operators 
(differentiations, recurrences, finite differences, $q$-shifts)
and is unified by what is called holonomy theory.
This theory leads to a fascinating algorithmic universe to deal 
with orthogonal polynomials, Laplace and Mellin transforms, and most of the integrals of special functions:
it offers powerful tools to prove identities, 
asymptotic expansions, numerical values, structural properties; see~\cite{PrudnikovBrychkovMarichev92,OlverLozierBoisvertClark10,KauersPaule11}. 

We have seen in Section~\ref{Sec2.1} that the dynamics of urns is intrinsically related to \emph{partial} differential equations (mixing $\partial_x$, $\partial_y$, and $\partial_z$).
It is therefore a nice surprise that it is also possible to describe their evolution in many cases with 
\emph{ordinary} differential equations (i.e.~involving only $\partial_z$).

\pagebreak

\begin{theorem}[Differential equations for histories]\label{theo:DiffEq}
	The generating functions describing a Young--P\'olya urn of period $p$ and parameter~$\ell$ with initially $s_0 = b_0 + w_0$ balls, where $b_0$ are black and $w_0$ are white, 
 satisfy the following system of $p$ partial differential equations:
\begin{align}
	\label{eq:fundamentalpde}
	\partial_z H_{i+1}(x,z) = x(x-1) \partial_x H_i(x,z) + \left(1 + \frac{\ell}{p}\right) z\partial_z H_i(x,z) + \left(s_0-\frac{i\ell}{p}\right) H_{i}(x,z),
\end{align}
for $i=0,\ldots,p-1$ with $H_{p}(x,z) := H_{0}(x,z)$.
Moreover, if any of the corresponding generating functions (ordinary, exponential, ordinary probability, or exponential probability)
 is D-finite in $z$, then all of them are D-finite in $z$.
\end{theorem}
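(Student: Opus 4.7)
My plan is to derive the ordinary differential system~\eqref{eq:fundamentalpde} directly from the transition equations~\eqref{eq:funceqgen1} and the balance equations~\eqref{eq:funceqgen2} by specializing at $y=1$, and then to establish the D-finiteness claim by invoking standard closure properties.

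The obstruction to simply setting $y=1$ in~\eqref{eq:funceqgen1} is the presence of $y^2\partial_y$ inside $\Dc_1$: knowing $H_i(x,z)=H_i(x,1,z)$ does not by itself determine $\partial_y H_i(x,y,z)|_{y=1}$. The balance equation~\eqref{eq:funceqgen2} supplies exactly this missing information: evaluating it at $y=1$ and solving for $\partial_y H_i|_{y=1}$ yields
\begin{equation*}
\partial_y H_i(x,y,z)|_{y=1}=-x\,\partial_x H_i(x,z)+\Bigl(1+\tfrac{\ell}{p}\Bigr)z\,\partial_z H_i(x,z)+\Bigl(s_0-\tfrac{i\ell}{p}\Bigr)H_i(x,z).
\end{equation*}
Substituting this into $[\Dc_1 H_i]_{y=1}=x^2\partial_x H_i(x,z)+\partial_y H_i|_{y=1}$ collapses the $x^2\partial_x-x\partial_x$ contribution to $x(x-1)\partial_x$, matching~\eqref{eq:fundamentalpde} for $i=0,\dots,p-2$. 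For $i=p-1$ the only additional observation is that $\Dc_2=y^\ell\Dc_1$ becomes $\Dc_1$ when $y=1$, so the same formula holds with the cyclic identification $H_p=H_0$.

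For the D-finiteness part I would rely purely on the standard closure properties of D-finite series in the variable $z$ with coefficients in $\mathbb{Q}(x)$: closure under addition, under multiplication by polynomials in $x$ and $z$, under the commuting operators $\partial_x$ and $\partial_z$, and under antidifferentiation in $z$. Assuming that some $H_i$ is D-finite in $z$, the right-hand side of~\eqref{eq:fundamentalpde} is D-finite in $z$, and integrating once transfers the property to $H_{i+1}$. Because the system is cyclic of period~$p$, iterating this argument from any starting index yields D-finiteness of every $H_j$, and hence of $H=\sum_j H_j$. The transition between the four flavours of generating functions (ordinary versus exponential, normalized or not by the total number of histories $T_n=\prod_{k=0}^{n-1}(s_0+k+\ell\lfloor k/p\rfloor)$) is a Hadamard product with $(n!)^{\pm 1}$ or $(T_n)^{\pm 1}$; both sequences visibly satisfy linear recurrences with polynomial coefficients in $n$, so the Hadamard product preserves D-finiteness in $z$.

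I do not expect a genuine obstacle here. The most delicate point to check is the closure of D-finiteness in~$z$ under the operator $\partial_x$: this follows from the general fact that differentiating, with respect to~$x$, the linear ODE in $z$ annihilating $H_i$ and reducing modulo the original operator produces a linear ODE in $z$ annihilating $\partial_x H_i$. Once this is granted, every step above is a mechanical application of a standard closure rule.
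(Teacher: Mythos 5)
Your derivation of \eqref{eq:fundamentalpde} --- solving the balance equation \eqref{eq:funceqgen2} at $y=1$ for $\partial_y H_i|_{y=1}$ and substituting into \eqref{eq:funceqgen1} so that $x^2\partial_x - x\partial_x$ collapses to $x(x-1)\partial_x$ --- is exactly the paper's elimination argument, and your D-finiteness transfer via Hadamard products with $(n!)^{\pm1}$ and $(T_n)^{\pm1}$ matches the paper's proof as well. The one point you should make explicit is why $1/T_n$ is P-recursive (the reciprocal of a P-recursive sequence is \emph{not} P-recursive in general, and the floor in $T_{n+1}=(s_0+n+\ell\lfloor n/p\rfloor)T_n$ means the coefficient is only a quasi-polynomial in $n$): as the paper emphasizes, the rescue is that on each residue class mod $p$ the subsequence $T_{pm+i}$ is hypergeometric in $m$, so its reciprocal satisfies the reversed first-order polynomial-coefficient recurrence and the interleaving of these $p$ hypergeometric subsequences is again holonomic.
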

\begin{proof}
First, let us prove the system involving $\partial_z$ and $\partial_x$ only. 
Combining \eqref{eq:funceqgen1} and \eqref{eq:funceqgen2}, we eliminate~$\partial_y$. Then it is legitimate to insert $y=1$ as there appears no differentiation with respect to $y$ anymore. This gives~\eqref{eq:fundamentalpde}.

Now, assume the ordinary generating function is D-finite. 
Multiplying a holonomic sequence by $n!$ (or by $1/n!$, or more generally by any holonomic sequence)
gives a new sequence, which is also holonomic.
In other words, the Hadamard product of two holonomic sequences is still holonomic~\cite[Chapter 6.4]{Stanley99}.
This proves that the ordinary and exponential versions of our generating functions $H$ and~$H_i$ are D-finite in $z$.

Finally, for the probability generating function defined as 
\begin{equation}
\sum_{n,b,w} \Prob\left(B_n=b \text{ and } W_n=w\right) x^b y^w z^n 
= \sum_{n} \frac{h_n(x,y)}{h_n(1,1)} z^n,
\end{equation}
it is in general not the case that it is holonomic if the initial ordinary generating function is holonomic!
But in our case a miracle occurs:
in each residue class of $n$ mod $p$, the sequence $(h_{pm+i}(1,1))_{m\in \N}$ is hypergeometric 
(as shown in Theorem~\ref{theo_hypergeometric}), therefore the $p$
subsequences $(1/h_{pm+i}(1,1))_{m \in \N}$ are also hypergeometric, and thus the above probability generating function 
(which is the sum of $p$ holonomic functions, each one being the Hadamard product of two holonomic functions)
is holonomic.
\end{proof}

Experimentally, in most cases a few terms suffice to guess a holonomic sequence in $z$. 
We believe that this sequence is always holonomic, yet we were not able to prove it in full generality. 
We plan to comment more on this and other related phenomena in the forthcoming article~\cite{BanderierWallner19}.

\pagebreak 

\begingroup 
\def\arraystretch{1.7}
\begin{table}[ht]
	\begin{center}
	\begin{tabular}{|c|c||c|c|c|}
		\hline Type & Generating function & Order in $\partial_z$ & Degree in $z$ & Degree in $x$ \\
	\hline 
			EGF & $\displaystyle{\sum_{n,b,w} h_{n,b,w} x^b y^w \frac{z^n}{n!}}$ &
				$5$ & $13$ & $16$ \\ 
			OGF & $\displaystyle{\sum_{n,b,w} h_{n,b,w} x^b y^w z^n}$ &
				$7$ & $23$ & $20$ \\ 
			EPGF & $\displaystyle{\sum_{n,b,w} \Prob\left(B_n=b \text{ and } W_n=w\right) x^b y^w \frac{z^n}{n!}}$ &
				$8$ & $4$ & $15$ \\ 
			OPGF & $\displaystyle{\sum_{n,b,w} \Prob\left(B_n=b \text{ and } W_n=w\right) x^b y^w z^n}$ &
				$3$ & $13$ & $14$ \\ 
		\hline 
 \end{tabular}
\end{center}\internallinenumbers
\caption{Size of the D-finite equations for the four types of generating functions of histories
(for the urn model of Example~\ref{ex:YoungPolyaODE}). 
We use the abbreviations EGF (exponential generating function), OGF (ordinary generating function), EPGF (exponential probability generating function), OPGF (ordinary probability generating function).
We omit the degree of the variable~$y$, as, for balanced urns, it is trivially related to the degree in $x$.}
\label{tab:dfinitegftypes}
\end{table}
\endgroup

\begin{example}\label{ex:YoungPolyaODE}
	In the case of the Young--P\'olya urn with $p=2$, $\ell=1$, and $b_0=w_0=1$, the differential equations for histories~\eqref{eq:fundamentalpde} are
	\begin{align*}
		\begin{cases}
		\begin{aligned}
		\partial_z H_0(x,z) &= x(x-1) \partial_x H_1(x,z) + \frac{3}{2} z \partial_z H_1(x,z) + \frac{3}{2} H_1(x,z),\\[2mm]
		\partial_z H_1(x,z) &= x(x-1) \partial_x H_0(x,z) + \frac{3}{2} z \partial_z H_0(x,z) + 2 H_0(x,z).
		\end{aligned}
		\end{cases}
	\end{align*}

In addition to this system of {\bf partial} differential equations, 
there exist also two {\bf ordinary} linear differential equations in~$z$
 for $H_0$ and $H_1$, and therefore for their sum $H:=H_0+H_1$, the generating function of all histories. 

In Table~\ref{tab:dfinitegftypes} we compare the size of the D-finite equations\footnote{When we say {\bf the} equation, we mean the linear differential equation of minimal order in $\partial_z$, and then minimal degrees in $z$ and $x$, up to a constant factor for its leading term.}
 for the different generating functions. For example, for the ordinary probability generating function one has the equation $L.F(x,z) =0$, 
where $L$ is the following differential operator of order 3 in $\partial_z$:\\[-2mm]
\def\Dz{\partial_z}
{
\begin{minipage}{\textwidth}
\begin{align*}
	L&= 9\,z \textcolor{black}{ \left( z-1 \right)} \left( z+1 \right) \left( 15\,{x}^{13}{z}^{10}+\dots+3 \right) \textcolor{black}{ \Dz^{3}}+ 3\, \left( 375\,{x}^{13}{z}^{12}+\dots-21 \right) \textcolor{black}{ \Dz^{2}} \\
	&\quad + 2\, \left( 1020\,{x}^{13}{z}^{11}+\dots+42 \right) \textcolor{black}{ \Dz}+600\,{x}^{13}{z}^{10}+\dots+1.
\end{align*}
\end{minipage}
}

\vspace{2mm}

The singularity at $z=1$ of the leading coefficient reflects the fact that $F$ is a probability generating function (and thus has radius of convergence equal to~$1$).
It is noteworthy that some roots of the indicial polynomial of $L$ at $z=1$ differ by an integer, 
this phenomenon is sometimes called resonance, and often occurs in the world of hypergeometric functions; we will come back to these facts and what they imply for the asymptotics (see also~\cite[Chapter~IX.~7.4]{FlajoletSedgewick09}).
\end{example}

Note that the fact to be D-finite has an unexpected consequence:
it allows a surprisingly fast computation of~$h_n$ in time $O(\sqrt n \log^2 n)$ 
(see~\cite[Chapter 15]{BostanChyzakGiustiLebretonLecerfSalvySchost17} for a refined complexity analysis
of the corresponding algorithm).
Such efficient computations are e.g.~implemented in the Maple package \texttt{gfun} (see~\cite{SalvyZimmermann94}).
This package, together with some packages for differential elimination (see~\cite{GerdLangeHegermannRobertz2019,BoulierLemairePoteauxMorenomaza2019}),
allows us to compute the different D-finite equations from Table~\ref{tab:dfinitegftypes}, 
via the union of our Theorem~\ref{theo_hypergeometric} on the hypergeometric closed forms 
and the closure properties mentioned above. 

Another important consequence of the D-finiteness is that the type of the singularities that the function can have 
is constrained. In particular, one important subclass of D-finite functions can be automatically analysed:

\begin{remark}\label{remark:FlajoletLafforgue}
Flajolet and Lafforgue have proven that under some ``generic'' conditions, 
such D-finite equations lead to a Gaussian limit law (see~\cite[Theorem~7]{FlajoletLafforgue94} and \cite[Chapter IX. 7.4]{FlajoletSedgewick09}).
It is interesting that these generic conditions are not fulfilled in our case:
we have a cancellation of the leading coefficient of $L$ at $(x,z)=(1,1)$, a confluence for the indicial polynomial, and the resonance phenomenon mentioned above!
The natural model of periodic P\'olya urns thus leads to an original analytic situation, which offers a new (non-Gaussian) limit law.
\end{remark}
We thus need another strategy to determine the limit law.
In the next section, we use the system of equations~\eqref{eq:fundamentalpde} to iteratively derive the moments of the distribution of black balls after~$n$~steps.

\section{Moments of periodic P\'olya urns} \label{sec:Moments}

In this section, we give the proof of Theorem~\ref{theo:PGG} and a generalization of it. As it will use the method of moments,
let us introduce $m_r(n)$, the $r$-th factorial moment of the distribution of black balls after $n$ steps, i.e.
\begin{align*}
	m_r(n):= \E\left( B_n(B_n-1)\cdots(B_n-r+1) \right).
\end{align*}
Expressing them in terms of the generating function $H(x,z)$, it holds that
\begin{align*}
	m_r(n) = \frac{[z^{n}] \left. \frac{\partial^r}{\partial x^r} H(x,z) \right|_{x=1}}{[z^{n}]H(1,z)},
\end{align*}
where $[z^n]\sum_{n}f_n z^n := f_n$ is the coefficient extraction operator. 

We will compute the sequences of the numerator and denominator separately.
We start with the denominators, the total number of histories after $n$ steps.

\subsection{Number of histories: a hypergeometric closed form}

We prove that $H(1,z)$ satisfies a miraculous property which does not hold for 
$H(x,z)$: it is a sum of generalized hypergeometric functions 
(see e.g.~\cite{AndrewsAskeyRoy99} for an introduction to this important class of special functions).

\pagebreak

\begin{theorem}[Hypergeometric closed forms]\label{theo_hypergeometric}
Let $h_n := n![z^n] H(1,z)$ be the number of histories after $n$ steps in a Young--P\'olya urn 
of period $p$ and parameter~$\ell$ with initially $s_0 = b_0 + w_0$ balls, where $b_0$ are black and $w_0$ are white.
Then, for each~$i$, $(h_{pm+i})_{m\in \N}$ is a hypergeometric sequence,
satisfying the recurrence
\begin{align}\label{hpmi_rec}
	h_{p(m+1)+i} 
	&= \prod_{j=0}^{i-1} \left((p+\ell)(m+1) + s_0+j \right) \prod_{j=i}^{p-1} \left((p+\ell)m + s_0+j \right) h_{pm+i}.
\end{align}
Equivalent closed forms are given in Equations~\eqref{hpmi1} and~\eqref{hpmi2}.
\end{theorem}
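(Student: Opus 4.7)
The plan is to set $x=1$ in the system of ordinary differential equations~\eqref{eq:fundamentalpde} from Theorem~\ref{theo:DiffEq}. The term $x(x-1)\partial_x H_i(x,z)$ vanishes identically, so writing $H_i(z) := H_i(1,z)$ I obtain the clean scalar system
\begin{equation*}
H_{i+1}'(z) \;=\; \left(1+\frac{\ell}{p}\right) z\, H_i'(z) \,+\, \left(s_0 - \frac{i\,\ell}{p}\right) H_i(z), \qquad i = 0, 1, \ldots, p-1,
\end{equation*}
with the convention $H_p := H_0$. The key observation is that at $x=1$ there is no coupling to $\partial_x$: the dynamics of the total count of histories decouples completely from the fine-grained evolution in $x$, which is exactly the phenomenon that will give us hypergeometric closed forms.

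Next I would extract coefficients. Since $H_i(z)=\sum_{m\geq 0} h_{pm+i}\,z^{pm+i}/(pm+i)!$ and $zH_i'(z)=\sum_m (pm+i)\, h_{pm+i}\, z^{pm+i}/(pm+i)!$, matching the coefficient of $z^{pm+i}/(pm+i)!$ in the ODE yields
\begin{equation*}
h_{pm+i+1} \;=\; \left[\tfrac{1}{p}\bigl((p+\ell)(pm+i) + p s_0 - i\,\ell\bigr)\right] h_{pm+i} \;=\; \bigl[(p+\ell)m + s_0 + i\bigr]\, h_{pm+i},
\end{equation*}
valid for $i = 0, 1, \ldots, p-1$ (with the left-hand side interpreted as $h_{p(m+1)}$ when $i = p-1$). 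Iterating this one-step recurrence first from $h_{pm+i}$ up to $h_{p(m+1)}$ contributes the factors $\prod_{j=i}^{p-1}\bigl[(p+\ell)m + s_0 + j\bigr]$, after which continuing from $h_{p(m+1)}$ up to $h_{p(m+1)+i}$ contributes $\prod_{j=0}^{i-1}\bigl[(p+\ell)(m+1) + s_0 + j\bigr]$. Multiplying these telescoping factors gives exactly~\eqref{hpmi_rec}. Since $h_{p(m+1)+i}/h_{pm+i}$ is then a polynomial in $m$, each of the $p$ subsequences $(h_{pm+i})_{m\in\N}$ is hypergeometric, and unrolling from the initial value $h_i$ yields the explicit product closed forms announced as~\eqref{hpmi1} and~\eqref{hpmi2}.

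The main bookkeeping hurdle will be the wrap-around step from $i = p-1$ back to $0$. In the original system~\eqref{eq:funceqgen1} this transition is governed by $\Dc_2 = y^\ell \Dc_1$ rather than $\Dc_1$, so one might worry that it produces a different scalar equation. The point is that the extra factor $y^\ell$ becomes $1$ upon setting $y=1$, and the index-dependent constant in~\eqref{eq:fundamentalpde} is already written as $s_0 - i\ell/p$, which at $i = p-1$ delivers exactly the coefficient needed to produce the factor $(p+\ell)m + s_0 + (p-1)$. I would verify this wrap-around explicitly, and also check that the initial values $h_0 = 1$ (or the appropriate starting term in each residue class) are consistent, which is routine.
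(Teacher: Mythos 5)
Your proposal is correct and follows essentially the same route as the paper: substitute $x=1$ into the system~\eqref{eq:fundamentalpde}, extract the coefficient of $z^n$ to obtain the one-step recurrence $h_{n+1} = \bigl((1+\tfrac{\ell}{p})n + s_0 - \tfrac{\ell}{p}(n \bmod p)\bigr)h_n$, and iterate it $p$ times to get~\eqref{hpmi_rec}. Your explicit verification of the wrap-around step and of the simplification $(1+\tfrac{\ell}{p})(pm+i) + s_0 - \tfrac{i\ell}{p} = (p+\ell)m + s_0 + i$ is exactly the computation the paper leaves implicit.
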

\begin{proof}Substituting $x=1$ into \eqref{eq:fundamentalpde} and extracting the coefficient of $z^{n}$ for $i=0,\ldots,p-1$ gives the recurrence
\begin{align}
	h_{n+1} &= \left((1+\frac{\ell}{p}) n + b_n \right) h_{n}, \quad \text{ with } \label{eq:hneq}\\
	b_n : &= s_0 - \frac{\ell}{p} (n \mod p), \label{eq:hneqab}
\end{align}
where $n \mod p$ gives values in $\{0,1,\ldots,p-1\}$. Iterating this recurrence relation $p$ times gives~\eqref{hpmi_rec}.
This leads to the following equivalent closed forms
\begin{align} 
	h_{pm+i} &= \frac{ (p+\ell)^{pm+i}}{\prod_{j=0}^{p-1}\Gamma\left(\frac{s_0+j}{p+\ell}\right)} \prod_{j=0}^{i-1} \Gamma\left(m+1 + \frac{s_0+j}{p+\ell}\right) \prod_{j=i}^{p-1} \Gamma\left(m + \frac{s_0+j}{p+\ell}\right), \label{hpmi1}\\
h_{pm+i} &= (p+\ell)^{pm} \frac{\Gamma(s_0+(p+\ell) m+i) }{\Gamma(s_0+(p+\ell) m)}
\prod_{j=0}^{p-1} \frac{\Gamma\left(m+\frac{s_0+j}{p+\ell}\right)}{\Gamma\left(\frac{s_0+j}{p+\ell}\right)}. \label{hpmi2}
\end{align}
Accordingly, the function $H(1,z)$ is the sum of $p$ generalized hypergeometric functions ${}_pF_0$.
\end{proof}

\begin{example}
	In the case of the Young--P\'olya urn with $p=2$, $\ell=1$, and $b_0=w_0=1$,
	one has the hypergeometric closed forms for $h_n:=n! [z^n] H(1,z)$:
	\begin{align}\label{hn}
		h_n &=
		\begin{cases}
			 3^n \frac{\Gamma\left(\frac{n}{2} + 1\right) \Gamma\left(\frac{n}{2} + \frac{2}{3}\right)}{\Gamma(2/3)} & \text{ if $n$ is even,} \\
			3^n \frac{\Gamma\left(\frac{n}{2} + \frac{1}{2}\right) \Gamma\left(\frac{n}{2} + \frac{7}{6}\right)}{\Gamma(2/3)} & \text{ if $n$ is odd.}
		\end{cases}
	\end{align}
	Alternatively, this sequence satisfies $h(n+2)=\frac{3}{2} h(n+1)+\frac{1}{4}(9\,n^2+21\,n+12)h(n)$.
	This sequence was not in the On-Line Encyclopedia of Integer Sequences, accessible at \href{https://oeis.org}{https://oeis.org}.
	We added it there; it is now \OEIS{A293653}, and it starts like this:
	$1, 2, 6, 30, 180, 1440, 12960, 142560, 1710720, \ldots$
The exponential generating function can be written as the sum of two hypergeometric functions:\\[2mm]
\begin{minipage}{.99\textwidth}
\begin{equation*}H(1,z)={}_2F_1\left(\left[\frac{2}{3}, 1\right], \left[\frac{1}{2}\right], \left(\frac{3z}{2}\right)^2\right)+ 
2z \,\, {}_2F_1\left(\left[\frac{5}{3}, 1\right], \left[\frac{3}{2}\right], \left(\frac{3z}{2}\right)^2\right).\end{equation*}
\end{minipage}
\vspace{-8mm}\quad\flushright\qquad
\end{example}

\subsection{Mean and critical exponent}

Let us proceed with the computation of moments. For this purpose, define
\begin{align}\label{defhnr}
	h_{n}^{(r)} := n! [z^n] \left.\frac{\partial^r}{\partial x^{r}} H(x,z) \right|_{x=1},
\end{align}
as the coefficient of $\frac{(x-1)^r z^n}{r! \, n!}$ of $H(x,z)$. Then the $r$-th moment is obviously computed as
$
	m_{r}(n) = \frac{h_n^{(r)}}{h_n}.
$
The key idea why to use these quantities comes from the differential equations for histories~\eqref{eq:fundamentalpde}. The derivative of $H_i(x,z)$ with respect to $x$ has a factor $(x-1)$, which makes it possible to compute $h_{n}^{(r)}$ iteratively by taking the $r$-th derivative with respect to $x$ and substituting $x=1$. Let us define the auxiliary functions 
\begin{align*}
	H_i^{(r)}(z) := \left. \frac{\partial^r}{\partial x^{r}} H_i(x,z) \right|_{x=1}.
\end{align*}
We get for $i=0,\ldots,p-1$ (with $b_i$ as defined in~\eqref{eq:hneqab}):
\begin{align*}
	\partial_z H_{i+1}^{(r)}(z) &= (1+\frac{\ell}{p}) z \partial_z H_{i}^{(r)}(z) + (b_i + r) H_{i}^{(r)}(z) + (r-1) r H_{i}^{(r-1)}(z).
\end{align*}	
 From this equation we extract the $n$-th coefficient with respect to $z$ and multiply by $n!$ to get
\begin{align}
	h_{n+1}^{(r)} &= \left( (1+\frac{\ell}{p}) n + b_n + r \right) h_{n}^{(r)} + (r-1) r h_{n}^{(r-1)}. \label{eq:hneqr}
\end{align}
We reveal a perturbed version of~\eqref{eq:hneq}. In particular, this is a non-homogeneous linear recurrence relation. Yet, the inhomogeneity only emerges for $r\geq 2$. Thus, the mean is derived directly with the same approach as $h_{n}$ previously. Note that for $r=1$ Equation~\eqref{eq:hneqr} is exactly of the same type as~\eqref{eq:hneq} after replacing $s_0$ by $s_0+r$ and $h_0$ by $b_0$. We get without any further work
\begin{align*}
	h_{pm+i}^{(1)} &= C_{1} \, (p+\ell)^{pm+i} \prod_{j=0}^{i-1} \Gamma\left(m+1 + \frac{s_0+1+j}{p+\ell}\right) \prod_{j=i}^{p-1} \Gamma\left(m + \frac{s_0+1+j}{p+\ell}\right),\\
	C_{1} &= b_0 \prod_{j=0}^{p-1}\Gamma\left(\frac{s_0+1+j}{p+\ell}\right)^{-1}.
	\end{align*}
Combining the last two results, we get a (surprisingly) simple expression
\begin{align*}
	\E B_{pm+i} &= \frac{h_{pm+i}^{(1)}}{h_{pm+i}} 
		 = \frac{C_{1}}{C_{0}} \frac{\prod_{j=0}^{i-1} \Gamma\left(m+1 + \frac{s_0+1+j}{p+\ell}\right) \prod_{j=i}^{p-1} \Gamma\left(m + \frac{s_0+1+j}{p+\ell}\right)}{\prod_{j=0}^{i-1} \Gamma\left(m+1 + \frac{s_0+j}{p+\ell}\right) \prod_{j=i}^{p-1} \Gamma\left(m + \frac{s_0+j}{p+\ell}\right)}\\
		&= b_0 \frac{\Gamma\left(\frac{s_0}{p+\ell}\right)}{\Gamma\left(\frac{s_0+p}{p+\ell}\right)} \left(m + \frac{s_0+i}{p+\ell}\right)\frac{\Gamma\left(m + \frac{s_0+p}{p+\ell}\right)}{\Gamma\left(m+1 + \frac{s_0}{p+\ell}\right)}.
\end{align*}
In particular, it is straightforward to compute an asymptotic expansion for the mean by Stirling's approximation. 
For $i=0,1,\ldots,p-1$, we get
\begin{align*}
	\E B_{pm+i} 
		&= b_0\frac{\Gamma\left(\frac{s_0}{p+\ell}\right)}{\Gamma\left(\frac{s_0+p}{p+\ell}\right)} 
		 m^{\frac{p}{p+\ell}} \left(1 + \LandauO\left(\frac{1}{m}\right)\right).
\end{align*}

This leads to the following proposition.

\begin{proposition}[Formula for the mean of Young--P\'olya urns]
	\label{prop:mean}
	The expected number of black balls in a Young--P\'olya urn of period~$p$ and parameter~$\ell$ with initially $s_0 = b_0 + w_0$ balls, where $b_0$ are black and $w_0$ are white, satisfies for large~$n$
	\begin{align*}
		\E B_{n} 
			&= b_0\frac{\Gamma\left(\frac{s_0}{p+\ell}\right)}{\Gamma\left(\frac{s_0+p}{p+\ell}\right)} 
				 \left(\frac{n}{p}\right)^{\frac{p}{p+\ell}} \left(1 + \LandauO\left(\frac{1}{n}\right)\right).
	\end{align*}
\end{proposition}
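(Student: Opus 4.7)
The plan is to exploit the calculations already set up: pass from the recurrence for the first factorial moment to a hypergeometric closed form, take a telescoping ratio with the number of histories, and then apply Stirling's formula. The key observation that makes this easy is that the inhomogeneity $(r-1)r \, h_n^{(r-1)}$ in recurrence~\eqref{eq:hneqr} vanishes for $r=1$, so the recurrence for $h_n^{(1)}$ has exactly the same shape as the one for $h_n$, with the initial value $h_0=s_0$ replaced by $b_0$ and the parameter $s_0$ shifted to $s_0+1$ inside the coefficients. This observation has already been exploited in the text above to produce a closed form for $h_{pm+i}^{(1)}$ analogous to~\eqref{hpmi1}.

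Next, I would form the ratio $\E B_{pm+i} = h_{pm+i}^{(1)}/h_{pm+i}$. The powers of $p+\ell$ cancel, and most of the $\Gamma$ factors telescope because the two products differ only by a shift of the numerator parameter from $s_0+j$ to $s_0+1+j$. What remains is exactly
\begin{equation*}
\E B_{pm+i} = b_0\,\frac{\Gamma\!\left(\tfrac{s_0}{p+\ell}\right)}{\Gamma\!\left(\tfrac{s_0+p}{p+\ell}\right)}\left(m+\frac{s_0+i}{p+\ell}\right)\frac{\Gamma\!\left(m+\tfrac{s_0+p}{p+\ell}\right)}{\Gamma\!\left(m+1+\tfrac{s_0}{p+\ell}\right)},
\end{equation*}
which is the clean expression already derived above the proposition.

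Finally, I would apply the standard Stirling asymptotic $\Gamma(m+\alpha)/\Gamma(m+\beta)=m^{\alpha-\beta}\bigl(1+O(1/m)\bigr)$ with $\alpha=\tfrac{s_0+p}{p+\ell}$ and $\beta=1+\tfrac{s_0}{p+\ell}$, giving $\alpha-\beta=\tfrac{p}{p+\ell}-1$. Multiplying by the linear factor $m+\tfrac{s_0+i}{p+\ell}=m(1+O(1/m))$ absorbs the missing $-1$ in the exponent, yielding $m^{p/(p+\ell)}(1+O(1/m))$. Writing $n=pm+i$ with $i\in\{0,\dots,p-1\}$, one has $m=n/p+O(1)$, so $m^{p/(p+\ell)} = (n/p)^{p/(p+\ell)}(1+O(1/n))$ uniformly in $i$, which matches the stated formula.

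The only mildly subtle point is uniformity in the residue class $i$, but since $i$ ranges over a finite set and the Stirling expansion is uniform under bounded shifts of the argument, no extra work is required. I do not anticipate a genuine obstacle here: the whole argument is essentially a hypergeometric cancellation followed by Stirling, and the bulk of the effort has already been carried out in the preceding paragraphs.
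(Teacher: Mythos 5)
Your proposal is correct and follows essentially the same route as the paper: the authors likewise observe that the inhomogeneity in the moment recurrence vanishes for $r=1$, solve the resulting homogeneous recurrence in closed hypergeometric form, telescope the ratio $h_{pm+i}^{(1)}/h_{pm+i}$ to the exact expression you display, and conclude by Stirling's approximation. No gaps.
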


\begin{remark}[Critical exponent]
As will be more transparent from discussions in the next sections, the exponent $\delta:=\frac{p}{p+\ell}$ 
is here the crucial quantity to keep in mind. It is sometimes called 
``critical exponent'' as such exponents can often be captured by ideas from statistical mechanics, as a signature of a phase transition phenomenon.
\end{remark}

\begin{example}
For the Young--P\'olya urn with $p=2$, $\ell=1$, and $b_0=w_0=1$, the expected number of 
black balls at time $n$ is thus
	\begin{align*}
		\E B_{n} 
			&= \frac{\Gamma(2/3)}{\Gamma(4/3)} 
				 \left(\frac{n}{2}\right)^{\frac{2}{3}} \left(1 + \LandauO\left(\frac{1}{n}\right)\right)
\approx 0.9552\,\, n^{2/3} \left(1 + \LandauO\left(\frac{1}{n}\right)\right).
	\end{align*}
This is coherent with the renormalization used
for the limit law of $B_n$ in Example~\ref{ex:YoungPolyaMainresult}.
\end{example}

\subsection{Higher moments}

When computing higher moments, the first idea is to transform the non-homogeneous recurrence relation~\eqref{eq:hneqr}
into a homogeneous one. To this aim, one rewrites this equation into 
\begin{align}
	\label{eq:recgenmom}\qquad
	y_{n+1} - \left(a n + b_n + r \right) y_{n} &= (r-1) \, r\, h_{n}^{(r-1)} && \text{ and } &
	y_0 &= \partial_x^r H(x,0)|_{x=1}.
\end{align}
Note that we have $y_n = h_n^{(r)}$, the $r$-th moment we want to determine.
 From now on we speak of the \textit{homogeneous equation} to refer to the left-hand side of Equation~\eqref{eq:recgenmom}
set equal to $0$, whereas Equation~\eqref{eq:recgenmom} itself is called the \textit{non-homogeneous equation}.
In order to get $h_n^{(r)}$ we proceed by induction on~$r$: we assume 
that the $(r-1)$-st moment is known (thus, we know the right-hand side of~\eqref{eq:recgenmom}), 
and we want to express the $r$-th moment $h_{n}^{(r)}$ (i.e.~we want to solve the recurrence~\eqref{eq:recgenmom} for $y_n$)
in terms of this previously computed quantity.

As for any linear recurrence, its solution is given by a combination of a solution $h_{n,\text{hom}}^{(r)}$ of the homogeneous equation and of 
a particular solution $h_{n,\text{par}}^{(r)}$ such that
\begin{align}\label{eq:hnansatz}
	h_{n}^{(r)} &= C_{r} \, h_{n,\text{hom}}^{(r)} - h_{n,\text{par}}^{(r)}\,,
\end{align}
with $C_{r} \in \R$ such that the initial condition in~\eqref{eq:recgenmom} is satisfied. 
We will show that asymptotically only the solution $h_{n,\text{hom}}^{(r)}$ 
of the homogeneous equation 
is dominant. 
First of all, this solution is easy to compute, as it is again of the same type as~\eqref{eq:hneq}. We have 
\begin{align}\label{eq:hnhom}\qquad
	h_{pm+i,\text{hom}}^{(r)} &= (p+\ell)^{pm+i} \prod_{j=0}^{i-1} \Gamma\left(m+1 + \frac{s_0+r+j}{p+\ell}\right) \prod_{j=i}^{p-1} \Gamma\left(m + \frac{s_0+r+j}{p+\ell}\right).
\end{align}
The next idea is to find a particular solution of the non-homogeneous recurrence relation~\eqref{eq:recgenmom}. We will show that the equation exhibits a phenomenon similar to resonance and we will show that the particular solution is 
\begin{align}\label{hn2}
	h_{n,\text{par}}^{(r)} &= \sum_{j=1}^{r-1} d_j h_{n}^{(j)}, \text{\quad for constants $d_j \in \R$.}
\end{align}

We will compute the coefficients $d_j$ by induction from $r-1$ to $1$. 
First, we observe that the inhomogeneous part in the $r$-th equation is a multiple of the solution $h_n^{(r-1)}$ of the $(r-1)$-st equation. 
This motivates us to set $y_n = h_{n}^{(r-1)}$ in the homogeneous equation of the $r$-th equation. Using~\eqref{eq:recgenmom} then leads to
\begin{align*}
	h_{n+1}^{(r-1)} - \left(a n + b_n + r \right) h_{n}^{(r-1)} = (r-1)(r-2) h_{n}^{(r-2)} - h_{n}^{(r-1)}.
\end{align*}
Thus, by linearity we choose $h^{(r)}_{n,\text{par}} = z_n - (r-1) r h_n^{(r-1)}$, i.e.~$d_{r-1} = (r-1) r$, as a first candidate for a particular solution where $z_n$ is 
(still) an undetermined sequence. Inserting this into~\eqref{eq:recgenmom}, we get a recurrence relation for $z_n$,
where we reduced the order of the inhomogeneity by one in~$r$ (in comparison with~\eqref{eq:recgenmom}):
\begin{align*}
	z_{n+1} - \left(a n + b_n + r \right) z_{n} = r (r-1)^2 (r-2) h_{n}^{(r-2)}.
\end{align*}
Continuing this approach, we compute all $d_j$'s inductively. 
As the order in $r$ decreases, this approach terminates at $r = 1$. 
One thus identifies the constants $d_j$ of Formula~\eqref{hn2}:
\begin{align*}
	d_{j} = \prod_{i=j+1}^{r} \frac{(i-1) i}{r-i+1} 
					= \binom{r-1}{j-1} \frac{r!}{j!}
					= L(r,j),
\end{align*}
with $L(r,j)$ being the Lah numbers, which express the rising factorials in terms of falling factorials\footnote{\label{fn:fallingfac}The falling factorial $\ffac{x}{r}$ is defined by $\ffac{x}{r}:= x (x-1) \cdots (x-r+1)=\Gamma(x+1)/\Gamma(x-r+1)$,
while the rising factorial $\rfac{x}{r}$ is defined by $\rfac{x}{r} := \Gamma(x+r)/\Gamma(x)=x(x+1)\cdots(x+r-1)$. 
These notations were introduced as an alternative to the Pochhammer symbols 
by Graham, Knuth, and Patashnik in~\cite{GrahamKnuthPatashnik94}.} (see~\cite{Lah54} and~\cite[p.~43]{Riordan02}):
\begin{align}
	\label{eq:Lahrisingfalling}
	 \sum_{j=1}^{r} L(r,j) \ffac{x}{j} = \rfac{x}{r}.
\end{align}
Then, by~\eqref{eq:hnansatz} we get the general solution of the $r$-th moment
\begin{align}\label{eqtiti}
	h_{n}^{(r)} &= C_{r} \, h_{n,\text{hom}}^{(r)} - \sum_{j=1}^{r-1} L(r,j) h_{n}^{(j)}.
\end{align}
For $n=0$, Equation~\eqref{eqtiti} becomes 
\begin{align*}
h_0^{(r)} & = \partial_x^r H(x,0)|_{x=1} = \ffac{b_0}{r} = C_{r} \, h_{0,\text{hom}}^{(r)} - \sum_{j=1}^{r-1} L(r,j) \ffac{b_0}{j},
\end{align*}
which gives together with~\eqref{eq:Lahrisingfalling} that $C_{r} \, h_{0,\text{hom}}^{(r)} = \rfac{b_0}{r}$.

Finally, we are now 
able to compute the asymptotic expansion of the $r$-th (factorial) moment. 
Using Stirling's approximation, 
the quotient of the quantities given by~\eqref{eqtiti} and \eqref{eq:hnhom} gives that
$\frac{h_{n}^{(j)}}{h_{n,\text{hom}}^{(r)}}= \LandauO\left( n^{-\frac{(r-j)p}{p+\ell}} \right),$ 
for $j=1,\ldots,r-1$. 
Hence, for the $r$-th moment given by~\eqref{eqtiti}, we proved that the contribution of $h_{n,\text{hom}}^{(r)}$ 
is the asymptotically dominant one. 
This leads to the main result on the asymptotics of the moments:
\begin{proposition}[Moments of Young--P\'olya urns] 
\label{prop:highmoments}
The $r$-th (factorial) moment of $B_n$ (the number of black balls in the Young--P\'olya urn of period~$p$ and parameter~$\ell$ with initially $s_0 = b_0 + w_0$ balls, where $b_0$ are black and $w_0$ are white) for large~$n$ satisfies\addtocounter{footnote}{-1}\addtocounter{Hfootnote}{-1}\footnotemark
\begin{align*}
	m_r(n) = \gamma_r \, n^{\delta r} \left(1+\LandauO\left(\frac{1}{n}\right)\right),  \text{ \, with \, } 
	\gamma_r = \frac{\rfac{b_0}{r}}{p^{\delta r}} \prod_{j=0}^{p-1} \frac{\Gamma\left(\frac{s_0+j}{p+\ell}\right)}{\Gamma\left(\frac{s_0+r+j}{p+\ell}\right)} 
 \text{ \, and\, $\delta = \frac{p}{p+\ell}$}.
\end{align*}
\end{proposition}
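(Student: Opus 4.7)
The plan is to combine the explicit (but unwieldy) formulas already established for $h_n^{(r)}$ and $h_n$ into a clean asymptotic statement for the ratio $m_r(n)=h_n^{(r)}/h_n$, and then show that only the homogeneous piece contributes at leading order. Concretely, I would start from the decomposition in~\eqref{eqtiti},
\begin{equation*}
h_n^{(r)} = C_r\, h_{n,\text{hom}}^{(r)} - \sum_{j=1}^{r-1} L(r,j)\, h_n^{(j)},
\end{equation*}
and use the initial-value computation $C_r\, h_{0,\text{hom}}^{(r)} = \rfac{b_0}{r}$ (obtained by setting $n=0$ and applying the Lah identity~\eqref{eq:Lahrisingfalling}) to pin down $C_r$ explicitly as the ratio $\rfac{b_0}{r}/h_{0,\text{hom}}^{(r)}$ via the closed form~\eqref{eq:hnhom}.

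Next, I would pick any residue class $n = pm+i$ with $0\le i\le p-1$ and divide~\eqref{eqtiti} by the hypergeometric closed form~\eqref{hpmi1} for $h_{pm+i}$. Setting $\delta = p/(p+\ell)$, a direct application of Stirling's formula to the quotient of products of Gamma functions
\begin{equation*}
\frac{h_{pm+i,\text{hom}}^{(r)}}{h_{pm+i}} = \prod_{j=0}^{p-1} \frac{\Gamma\!\left(m+\alpha_{i,j}+\frac{r}{p+\ell}\right)}{\Gamma\!\left(m+\alpha_{i,j}\right)},
\end{equation*}
with $\alpha_{i,j}$ the appropriate shifts from~\eqref{hpmi1} and~\eqref{eq:hnhom}, yields the leading order $m^{\delta r}(1+O(1/m))$. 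Substituting $m = (n-i)/p$ converts this to $(n/p)^{\delta r}(1+O(1/n))$, and collecting the Gamma-function prefactors from $C_r$ produces precisely the advertised constant $\gamma_r = \rfac{b_0}{r} p^{-\delta r}\prod_{j=0}^{p-1}\Gamma((s_0+j)/(p+\ell))/\Gamma((s_0+r+j)/(p+\ell))$.

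It then remains to verify that the lower-order terms $L(r,j) h_n^{(j)}/h_n$ for $j<r$ are negligible. Here I would argue by induction on $r$: assuming the proposition holds up to order $r-1$, each such term is $O(n^{\delta j}) = O(n^{\delta(r-1)})$, which is a factor $n^{-\delta}$ smaller than the dominant contribution $\gamma_r n^{\delta r}$. Since $\delta >0$, these corrections are absorbed into the $O(1/n)$ error (in fact an $O(n^{-\delta})$ bound suffices to conclude, but one can sharpen to $O(1/n)$ by a second Stirling expansion — a routine but somewhat tedious check).

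The main technical obstacle I anticipate is bookkeeping the Gamma-function prefactors across the $p$ different residue classes so that the final constant $\gamma_r$ comes out uniform in $i$. A secondary subtlety is confirming the error term $O(1/n)$ rather than $O(n^{-\delta})$: this requires that the subleading Stirling correction in $h_{n,\text{hom}}^{(r)}/h_n$ is of order $1/n$, and that the lower-order $h_n^{(j)}$ contributions, after cancellations forced by the Lah-number identity~\eqref{eq:Lahrisingfalling}, do not spoil this. Everything else reduces to manipulations of hypergeometric-type products that the authors have already set up in~\eqref{hpmi1}--\eqref{hpmi2} and~\eqref{eq:hnhom}.
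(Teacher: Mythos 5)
Your proposal follows essentially the same route as the paper: it starts from the decomposition~\eqref{eqtiti} together with $C_r\,h_{0,\text{hom}}^{(r)}=\rfac{b_0}{r}$, applies Stirling's formula to the Gamma-quotient $h_{n,\text{hom}}^{(r)}/h_n$ to extract $\gamma_r n^{\delta r}$, and discards the lower-order terms $L(r,j)h_n^{(j)}$ exactly as the authors do via $h_n^{(j)}/h_{n,\text{hom}}^{(r)}=\LandauO\bigl(n^{-(r-j)\delta}\bigr)$. Your observation that the relative error contributed by these terms is a priori only $\LandauO(n^{-\delta})$ rather than $\LandauO(1/n)$ is apt (the paper's argument yields no better bound either), but this is harmless since only the leading order enters the method of moments.
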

\begin{example}
\label{ex:varYP}
For the Young--P\'olya urn with $p=2$, $\ell=1$, and $b_0=w_0=1$, the variance of the number of 
black balls at time~$n$ is thus
	\begin{align*}
		\Vb B_{n} 
			&= \frac{27}{8} \frac{ \Gamma\left(\frac{2}{3}\right)^2\left( 3\Gamma\left(\frac{4}{3}\right) - \Gamma\left(\frac{2}{3}\right)^2 \right)}{2^{1/3} \pi^2} 
				 n^{4/3} \left(1 + \LandauO\left(\frac{1}{n}\right)\right)
\approx 0.42068\,\, n^{4/3} \left(1 + \LandauO\left(\frac{1}{n}\right)\right). 
	\end{align*}
\quad\flushright\qquad
\end{example}

\textit{Nota bene:} The reasoning following Equation~\eqref{eqtiti} 
shows that these asymptotics are the same for the moments and the factorial moments,
so in the sequel we refer to this result indifferently from both points of view. 
              
\subsection{Limit distribution for periodic P\'olya urns}
\label{sec:carleman}
We use the method of moments to prove Theorem~\ref{theo:PGG} (the generalized gamma product distribution for Young--P\'olya urns). 
The natural factors occurring in the constant $\gamma_r$ of Proposition~\ref{prop:highmoments},
may they be $1/\Gamma(\frac{s+r+j }{p+\ell})$ or $(\rfac{b_0}{r})^{1/p}/\Gamma(\frac{s+r+j }{p+\ell})$,
do not satisfy the determinant/finite difference positivity tests for the Stieltjes/Hamburger/Hausdorff moment problems, therefore no continuous distribution has such moments (see~\cite{Wall48}). 
However, the full product does correspond to moments of a distribution which is easier to identify if 
we start by transforming the constant $\gamma_r$ by the Gauss multiplication formula of the gamma function; this gives
\begin{align*}
	\gamma_r 
			&= \frac{(p+\ell)^{ r}}{p^{\delta r}} 
						\frac{\Gamma\left(b_0+r\right) \Gamma\left(s_0\right)}{\Gamma\left(b_0\right) \Gamma\left(s_0+r\right)}
						\prod_{j=0}^{\ell-1} 
						\frac{\Gamma\left(\frac{s_0+r+p+j}{p+\ell}\right)}{\Gamma\left(\frac{s_0+p+j}{p+\ell}\right)}.
\end{align*}

\noindent Combining this result with 
the r-th (factorial) moment $m_r(n)$ from Proposition~\ref{prop:highmoments}, 
we see that the moments $\E\left( {B_n^*}^r\right)$ of the rescaled random variable 
$B_n^*:= \frac{p^{\delta}}{p+\ell} \frac{B_n}{n^{\delta}}$
converge for $n \to \infty$ to the limit
\begin{align} \label{m_r}
	m_r:= \frac{\Gamma\left(b_0+r\right) \Gamma\left(s_0\right)}{\Gamma\left(b_0\right) \Gamma\left(s_0+r\right)}
						\prod_{j=0}^{\ell-1} 
						\frac{\Gamma\left(\frac{s_0+r+p+j}{p+\ell}\right)}{\Gamma\left(\frac{s_0+p+j}{p+\ell}\right)},
\end{align}

\noindent a simple formula involving the parameters $(p,\ell,b_0,w_0)$ of the model (with $s_0:=b_0+w_0$). 

Next note that the following sum diverges (recall that $0 \leq (1-\delta) < 1$):
\begin{align}\label{Carleman}
	\sum_{r > 0} m_{r}^{-1/(2r)} = \sum_{r > 0} \left(\frac{(p+\ell)e}{r}\right)^{(1-\delta)/2}(1+\Landauo(1)) = +\infty\,.
\end{align}

\noindent Therefore, a result by Carleman 
(see~\cite[pp.~189--220]{Carleman23}) implies that there exists a unique distribution (let us call it $\mathcal D$) with such moments~$m_r$.
Then, by the limit theorem of Fr\'echet and Shohat~\cite[p.~536]{FrechetShohat31}\footnote{As a funny coincidence, Fr\'echet and Shohat mention in~\cite{FrechetShohat31} that 
the generalized gamma distribution with parameter $p\geq 1/2$ is uniquely characterized by its moments.}, $B_n^*$ converges to $\mathcal D$.

Finally, we use the shape of the moments in~\eqref{m_r} in order to express this distribution $\cal D$ in terms of the main functions defined in Section~\ref{sec:urn}.
First, note that if for some independent random variables $X, Y, Z$, one has 
$\E(X^r) = \E(Y^r) \E(Z^r)$ (and if $Y$ and $Z$ are determined by their moments), then $X \= Y Z$.
Therefore, we treat the factors independently.
The first factor corresponds to a beta distribution $\operatorname{Beta}(b_0,w_0)$.
For the other factors it is easy to check that if $X \sim \GG(\alpha,\beta)$ is a generalized gamma distributed random variable (as defined in Definition~\ref{def:GG}),
then it is a distribution determined by its moments, which are given by
$\E(X^r) = \frac{\Gamma\left(\frac{\alpha+r}{\beta}\right)}{\Gamma\left(\frac{\alpha}{\beta}\right)}.$
Therefore, the expression in~\eqref{m_r} characterizes the $\PGG$ distribution. 
This completes the proof of Theorem~\ref{theo:PGG}. \qed

\bigskip
For reasons which would be clear in Section~\ref{sec:Tableaux},
it was natural to focus first on Young--P\'olya urns. 
However, the method presented is this section allows us to handle more general models. 
It would have been quite indigestible to present directly the general proof with heavy notations 
and many variables but now that the reader got the key steps of the method,
she should be delighted to recycle all of this for free in the following much more general result:

\begingroup 
\begin{theorem}[The generalized gamma product distribution for triangular balanced urns]
\label{theo:general}
Let $p\geq 1$ and $\ell_1,\ldots,\ell_p\geq 0$ be non-negative integers. Consider a periodic P\'olya urn of period $p$ with replacement matrices $M_1,\ldots,M_p$ given by
$
M_j:=
	\begin{pmatrix}
			1 & \ell_j \\
			0 & 1+\ell_j
		\end{pmatrix}.
$ 
Then, the renormalized distribution of black balls is asymptotically for $n \to \infty$ given by the following product of distributions: 
\begin{equation}\label{ProdGenGammaGeneral}
\frac{p^\delta}{p+\ell} \frac{B_n }{n^\delta}\inlaw \operatorname{Beta}(b_0,w_0) 
	\prod_{\substack{i=1 \\i\neq\ell_1+\dots+\ell_j+j \text{ with } 1\leq j \leq p-1}}^{p+\ell-1} \GG(b_0+w_0+i, p+\ell).
\end{equation}
with $\ell = \ell_1+\dots+\ell_p$, $\delta=p/(p+\ell)$, and $\operatorname{Beta}(b_0,w_0)=1$ when $w_0=0$.
\end{theorem}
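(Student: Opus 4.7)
The plan is to recycle the method of Sections~\ref{Sec2} and~\ref{sec:Moments}, adapting each step to the $p$-tuple of parameters $(\ell_1,\ldots,\ell_p)$. Set $L_j := \ell_1+\cdots+\ell_j$ (with $L_0 := 0$) and $\ell := L_p$. First, to each matrix~$M_j$ one associates the differential operator $\mathcal{D}_j := y^{\ell_j}(x^2\partial_x + y^2\partial_y)$, giving the PDE system $\partial_z H_{i+1} = \mathcal{D}_{i+1} H_i$ for $i=0,\ldots,p-1$ (with $H_p := H_0$). Balancedness now reads: after $n = mp+i$ steps the urn deterministically contains $s_0 + n + m\ell + L_i$ balls. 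Substituting $y=1$ as in Theorem~\ref{theo:DiffEq} produces the corresponding ODE system in $x,z$, with the Young--P\'olya constant $s_0 - i\ell/p$ replaced by $s_0 + L_i - i\ell/p$.

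Setting $x=1$ further reduces the history count to a first-order recurrence $h_{n+1} = N_n h_n$, where $N_n = s_0 + r + L_r + q(p+\ell)$ (for $n=pq+r$) is the deterministic ball count. Grouping by residue class modulo~$p$ yields the hypergeometric closed form
\begin{align*}
h_{pm+i} = (p+\ell)^{pm+i}\prod_{r=0}^{i-1}\frac{\Gamma\bigl(m+1+\tfrac{s_0+r+L_r}{p+\ell}\bigr)}{\Gamma\bigl(\tfrac{s_0+r+L_r}{p+\ell}\bigr)}\prod_{r=i}^{p-1}\frac{\Gamma\bigl(m+\tfrac{s_0+r+L_r}{p+\ell}\bigr)}{\Gamma\bigl(\tfrac{s_0+r+L_r}{p+\ell}\bigr)},
\end{align*}
the direct analogue of~\eqref{hpmi1}. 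The higher factorial moments $h_n^{(r)}$ satisfy the same perturbed recurrence as~\eqref{eq:hneqr}, with inhomogeneity $r(r-1)h_n^{(r-1)}$, so the Lah-number resonance trick~\eqref{hn2}--\eqref{eq:Lahrisingfalling} goes through unchanged and yields $m_r(n) = \gamma_r n^{\delta r}(1+O(1/n))$ with $\delta = p/(p+\ell)$ and
\begin{align*}
\gamma_r = \frac{\rfac{b_0}{r}}{p^{\delta r}} \prod_{r'=0}^{p-1}\frac{\Gamma\bigl(\tfrac{s_0+r'+L_{r'}}{p+\ell}\bigr)}{\Gamma\bigl(\tfrac{s_0+r+r'+L_{r'}}{p+\ell}\bigr)}.
\end{align*}

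The distribution is then identified by the method of moments. The $p$ indices appearing in $\gamma_r$ form the set $\mathcal{A} := \{0\}\cup\{L_j+j : 1\leq j\leq p-1\}$, which are precisely the $p$ ``period-boundary'' indices \emph{absent} from the product in~\eqref{ProdGenGammaGeneral}; write the complementary surviving set as $\mathcal{S} := \{1,\ldots,p+\ell-1\}\setminus\{L_j+j\}$, of size $\ell$. Rewriting the product in $\gamma_r$ as a full product over $\{0,\ldots,p+\ell-1\}$ divided by the $\mathcal{S}$-contribution, Gauss's multiplication formula $\prod_{i=0}^{n-1}\Gamma(z+i/n) = (2\pi)^{(n-1)/2}n^{1/2-nz}\Gamma(nz)$ collapses the full product into $(p+\ell)^r\Gamma(s_0)/\Gamma(s_0+r)$; combined with $\rfac{b_0}{r} = \Gamma(b_0+r)/\Gamma(b_0)$ this is exactly the $r$-th moment of $\operatorname{Beta}(b_0,w_0)$, while the residual factors indexed by $\mathcal{S}$ are recognised as the $r$-th moments of the generalised gamma distributions $\GG(s_0+i,p+\ell)$ for $i\in\mathcal{S}$. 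Carleman's criterion~\eqref{Carleman} (which only requires $\delta<1$) and the Fr\'echet--Shohat theorem then conclude the convergence in law. The main obstacle is this last bookkeeping: carefully checking that the index set $\mathcal{A}$ in~$\gamma_r$ is complementary to the product in~\eqref{ProdGenGammaGeneral}, and cleanly separating the beta factor from the generalised gammas. The Young--P\'olya specialisation ($\ell_1=\cdots=\ell_{p-1}=0$, $\ell_p=\ell$) gives $\mathcal{A}=\{0,\ldots,p-1\}$ and $\mathcal{S}=\{p,\ldots,p+\ell-1\}$, recovering Theorem~\ref{theo:PGG} as a sanity check.
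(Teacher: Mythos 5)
Your proposal is correct and follows essentially the same route as the paper: the same PDE system with operators $\mathcal{D}_j = y^{\ell_j}(x^2\partial_x + y^2\partial_y)$, the same first-order recurrence for $h_n$ driven by the deterministic ball count, the same Lah-number treatment of the higher moments, and the same Gauss-multiplication/Carleman/Fr\'echet--Shohat identification of the limit --- in fact you supply more detail (the explicit hypergeometric closed form and the index-set bookkeeping $\mathcal{A}$ versus $\mathcal{S}$) than the paper's own sketch. Your constant $s_0 + L_i - i\ell/p$ in the reduced PDE is the consistent choice (the sign displayed in the paper's generalized system appears to be a typo, since its final moment formula carries $+\sum_{k}\ell_k$ in the gamma arguments, matching yours).
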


In the sequel, we denote this distribution by $\PGG([\ell_1,\ldots,\ell_p];b_0,w_0)$.

\begin{proof} The proof relies on the same steps as in Sections~\ref{Sec2} and \ref{sec:Moments}
with some minor technical changes, so we only point out the main differences.

The behaviour of the urn is now modeled by the $p$ differential operators 
$\Dc_j = y^{\ell_j} ( x^2 \partial_x + y^{2} \partial_y)$. 
As the matrices are balanced, there is (like in Equation~\eqref{eq:ballnumber})
a direct link between the number of black balls and the total number of balls.
This allows to eliminate the $y$ variable and leads to the following system of partial differential equations
(which generalizes Equation~\eqref{eq:fundamentalpde}):
\begin{align}
	\label{eq:funceqgenGEN}	
	\partial_z H_{i+1}(x,z) = x(x-1) \partial_x H_i(x,z) + \left(1 + \frac{\ell}{p}\right) z\partial_z H_i(x,z) + \left(s_0 - \sum_{j=1}^i \ell_j - \frac{i\ell}{p}\right) H_{i}(x,z),
\end{align}
for $i=0,\ldots,p-1$ with $H_{p}(x,z) := H_{0}(x,z)$.
Here, one again applies the method of moments used in this Section~\ref{sec:Moments}. 
In particular, Equation~\eqref{eq:hneq} remains the same.
 Only the coefficients $b_n$ in Equation~\eqref{eq:hneqab} 
change to $s_0 - \sum_{j=1}^i \ell_j - \frac{\ell}{p}(i \mod p)$.

Hence, we get the following asymptotic result for the moments generalizing Proposition~\ref{prop:highmoments}:
\begin{align}\label{MomentsOfLove}
	m_r(n) = \gamma_r \, n^{\delta r} \left(1+\LandauO\left(\frac{1}{n}\right)\right), & \text{ with } 
	\gamma_r = \frac{\rfac{b_0}{r}}{p^{\delta r}} \prod_{j=0}^{p-1} \frac{\Gamma\left(\frac{s_0}{p+\ell}+\frac{j+\sum_{k=1}^j \ell_k}{p+\ell}\right)}{\Gamma\left(\frac{s_0+r}{p+\ell}+\frac{j+\sum_{k=1}^j \ell_k}{p+\ell}\right)}.
\end{align}
After rewriting $\gamma_r$ via the Gauss multiplication formula,
we recognize the product of distributions (characterized by their moments) which we wanted to prove.
\end{proof}

\pagebreak

Let us illustrate this theorem with what we call 
the staircase periodic P\'olya urn (this model will reappear later in the article). 

\begin{example}[Staircase periodic P\'olya urn]
 \label{othermodel}
For the P\'olya urn of period $3$ with replacement matrices
\begin{equation*}
M_1:=
	\begin{pmatrix}
			1 & 0 \\
			0 & 1
		\end{pmatrix}, \text{\,}
M_2:=
	\begin{pmatrix}
			1 & 1 \\
			0 & 2
		\end{pmatrix}, \text{\, and }
M_3:=	\begin{pmatrix}
			1 & 2 \\
			0 & 3
		\end{pmatrix},
\end{equation*}
the number $B_n$ of black balls has the limit law $\PGG([0,1,2];b_0,w_0)$:\\[-1.7mm]
\begin{minipage}{.99\textwidth}
\begin{align*}\label{eq:slopevariant}
\frac{\sqrt{3}}{6} \frac{B_n }{\sqrt{n}}\inlaw \operatorname{Beta}(b_0,w_0) 
 \prod_{i=2,4,5} \GG(b_0+w_0+i, 6).
\end{align*}
\end{minipage}
\vspace{-9mm}\quad\flushright\qquad 
\end{example}
\endgroup

\smallskip

In the next section, we will see what are the implications of our results for urns on an apparently unrelated topic: Young tableaux.

\section{Urns, trees, and Young tableaux}\label{sec:Tableaux}

\vspace{-1mm}
As predicted by Anatoly Vershik in~\cite{Vershik01}, 
the $21^{\text{st}}$ century should see a lot of challenges and advances on the links between probability theory 
and (algebraic) combinatorics. 
A key r{\^o}le 
is played here by Young tableaux\footnote{A Young tableau of size~$n$ is an array with columns of (weakly) decreasing height,
in which each cell is labelled, 
and where the labels run from $1$ to $n$ and are strictly increasing along rows from left to right and columns from bottom to top; see Figure~\ref{fig:tabtreeurn}. 
We refer to~\cite{Macdonald15} for a thorough discussion on these objects.} because of their ubiquity in representation theory.
Many results on their asymptotic shape have been collected, 
but very few results are known on their asymptotic content when the shape is fixed
(see e.g.~the works by Pittel and Romik, Angel et~al., Marchal~\cite{PittelRomik07,AngelHolroydRomikVirag07,Romik15,Marchal16}, who have studied 
the distribution of the values of the cells in random rectangular or staircase Young tableaux, 
while the case of Young tableaux with a more general shape seems to be very intricate).
It is therefore pleasant that our work on periodic P\'olya urns allows us to get advances on the case of a triangular shape, with any rational slope. \vspace{-1mm}

\begin{definition}\label{triangYoung} For any fixed integers $n,\ell, p\geq 1$,
we define a \textit{triangular Young tableau} of parameters $(\ell,p,n)$ 
as a classical Young tableau with $N:= p\ell n(n+1)/2$ cells, with length $n\ell$, and height $np$ such that the first $\ell$ columns 
have $np$ cells, the next $\ell$ columns have $(n-1)p$ cells, and so on (see Figure~\ref{fig:tabtreeurn}).
\end{definition}

For such a tableau, we now study what is the typical value of its south-east corner (with the French convention of drawing tableaux; see~\cite{Macdonald15}
but, however, take care that on page~$2$ therein, Macdonald advises readers preferring the French convention to ``read this book upside down in a mirror''! Some French authors quickly propagated the joke that Macdonald was welcome to apply his own advice while reading their articles!).

\pagebreak

It could be expected (e.g.~via the Greene--Nijenhuis--Wilf hook walk algorithm for generating Young tableaux; see~\cite{GreeneNijenhuisWilf84}) 
that the entries near the hypotenuse should be $N-o(N)$. 
Can we expect a more precise description of these $o(N)$ fluctuations? Our result on periodic urns enables us to exhibit the right critical exponent, and the limit law in the corner:

\begin{theorem}\label{TheoremCorner}
 Choose a uniform random triangular Young tableau of parameters $(\ell,p,n)$ and of size
 $N = p\ell n(n+1)/2$ and put $\delta=p/(p+\ell)$. 
Let $\Yn$ be the entry of the south-east corner.
Then $(N-\Yn)/n^{1+\delta}$ converges in law to the same limiting
distribution as the number of black balls in the periodic Young--P\'olya urn 
with initial conditions $b_0=p$, $w_0=\ell$ and
with replacement matrices $M_1= \dots = M_{p-1} = 
	\begin{pmatrix}
			1 & 0 \\
			0 & 1
		\end{pmatrix}$ 
		and 
		$M_p=
		\begin{pmatrix}
			1 & \ell \\
			0 & 1+\ell
		\end{pmatrix}$,
i.e.~we have the convergence in law, as $n$ goes to infinity, towards 
 $\PGG$ (the distribution defined by Formula~\eqref{ProdGenGamma}, page~\pageref{ProdGenGamma}):
				\begin{equation*}
				\frac{2}{p \ell} \frac{N-\Yn}{ n^{1+\delta} } \stackrel{\mathcal L}{\longrightarrow} 
				\PGG(p,\ell,p,\ell	).
				\end{equation*}
\end{theorem}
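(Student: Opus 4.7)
The plan is to construct, via the density method developed in this section, an identity in distribution between $N-\Yn$ (suitably rescaled) and the number of black balls in the urn described in the statement, which then allows invoking Theorem~\ref{theo:PGG} directly. First I would reformulate the problem via the backward peeling dynamics: a uniform random standard Young tableau of shape $\lambda$ can be generated by iteratively removing a uniformly random corner of the current shape (weighted by the hook length formula), and in this formulation $N-\Yn$ is precisely the number of peeling steps before the south-east corner $(n\ell,1)$ is itself removed.

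Next, using the density method, I would represent the uniform random tableau as a uniform point of Stanley's order polytope, obtained by placing $N$ i.i.d. uniform $[0,1]$ labels on the cells and conditioning on the partial order of $\lambda$. In this representation $\Yn$ is the rank of the label at the south-east corner, and the probability $\Prob(\Yn=N-k)$ becomes a ratio of volumes of sub-polytopes of $[0,1]^N$. The virtue of this reformulation is that it converts the unwieldy hook-length products into clean integral expressions that are amenable to asymptotic analysis in the regime $k\asymp n^{1+\delta}$.

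The core of the proof is then to identify the resulting continuous density with the one produced by the Young--P\'olya urn. The hypotenuse of $\lambda$ has a period-$p$ staircase structure with steps of width $\ell$ and height $p$; in the density representation, the peeling process near the south-east corner factors into (i) a deterministic drift along the hypotenuse accounting for the $n^{1+\delta}$ scale and the prefactor $2/(p\ell)$ (an extra factor $n$ beyond the urn's $n^\delta$ comes from the linear extent of the hypotenuse, while $p\ell/2$ arises from $N=p\ell n(n+1)/2$), and (ii) a fluctuation component whose local transitions match exactly the periodic urn dynamics of Definition~\ref{def:youngpolyaurn}. In this matching, the $p$ rows of the final block contribute the initial $b_0=p$ black balls, the $\ell$ columns contribute the initial $w_0=\ell$ white balls, the $p-1$ within-period peelings correspond to the trivial replacements $M_1=\ldots=M_{p-1}$, and every $p$-th peeling (corresponding to crossing a step of the staircase, which exposes $\ell$ new cells) corresponds to the replacement $M_p$. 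Applying Theorem~\ref{theo:PGG} with parameters $(p,\ell,b_0,w_0)=(p,\ell,p,\ell)$ then yields the stated convergence to $\PGG(p,\ell,p,\ell)$.

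The main obstacle will be the third step: turning the heuristic matching between peeling transitions and urn replacements into a precise identity of (factorial) moments. The density method is designed to bypass the combinatorial complexity of hook products by working with integrals over the order polytope, but verifying that the limiting fluctuation is \emph{exactly} the urn of Definition~\ref{def:youngpolyaurn}, with the correct initial conditions and critical exponent $\delta=p/(p+\ell)$, requires careful bookkeeping of how the period-$p$ structure of the hypotenuse controls the factorization of the integrals, and of how Stirling-type expansions in the asymptotic regime reproduce the moment formula~\eqref{m_r}.
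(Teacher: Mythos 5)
Your high-level skeleton (order polytope / density method, then reduce to Theorem~\ref{theo:PGG}) matches the paper's, and your accounting for the prefactor $2/(p\ell)$ and the extra factor $n$ in the scaling $n^{1+\delta}$ is the right intuition. But the step you yourself flag as ``the main obstacle'' is exactly where the proof lives, and your proposed route through it --- matching the corner-peeling transitions of the tableau directly with the urn replacements --- is not how it can be made to work. The peeling process removes, at each step, a uniformly weighted corner of the \emph{whole} staircase shape, with probabilities given by hook products over the entire diagram; there is no exact finite-$n$ identification between this process (or its restriction ``near the south-east corner'') and a two-colour periodic P\'olya urn. The heuristic ``$p$ rows give $b_0=p$ black balls, $\ell$ columns give $w_0=\ell$ white balls'' cannot be turned into an identity of transition probabilities at the level of the tableau itself.

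The missing idea is an intermediate poset: a comb-shaped tree $\Tc$ built so that the hook lengths along its leftmost branch coincide with the hook lengths along the first row of the tableau. The paper's argument then splits into three exact or quantitative steps. First, Proposition~\ref{theo:tableautree} (via Lemma~\ref{lemma:filament}, the ``filament'' trick: append $L$ cells to the corner of the tableau and $L$ vertices to the distinguished tree vertex, compare the two counts of linear extensions through the hook length formulas for tableaux and for trees, and deduce that the two corner densities $g_M$ and $h_{v_m}$ are proportional as polynomials) shows that the corner entry $\Yn$ has \emph{exactly} the law of $E_\Tc(v_m)-1$. Second, Proposition~\ref{prop:urntree} shows that for the tree --- whose poset structure is simple enough that conditional uniformity of linear extensions can be exploited vertex by vertex --- the quantity $|\Sc|-E_\Sc(v_m)$ is \emph{exactly} distributed as the periodic urn after $(n-1)p$ steps with $b_0=p$, $w_0=\ell$. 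Third, Proposition~\ref{prop:TeqS} converts $|\Sc|-E_\Sc(v_m)$ (scale $n^\delta$) into $N-E_\Tc(v_m)$ (scale $n^{1+\delta}$) by a Bienaym\'e--Chebyshev concentration argument for order statistics of the $|\Sc'|$ free labels. Without the tree (or an equivalent device making the urn identification exact rather than asymptotic-heuristic), your step (iii) remains a conjecture, and the proof is incomplete.
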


\begin{remark}
The case $p=1$ corresponds to a classical (non-periodic) urn; see Remark~\ref{rem:p1}.
The case $p=2$ and $\ell=1$ corresponds to our running example of a Young--P\'olya urn; see Example~\ref{ex:YoungPolyaMainresult}.
\end{remark}

\begin{remark}
If we replace the parameters $(\ell,p,n)$ by $(K\ell,Kp,n)$ for some integer $K>1$, 
we are basically modelling the same triangle, 
yet the limit law is $\PGG(Kp,K\ell,Kp,K\ell)$, which differs from $\PGG(p,\ell,p,\ell)$.
It is noteworthy that one still has some universality: 
the critical exponent $\delta$ remains the same and, besides, the limit laws are closely related in the sense that
they have similar tails. We address these questions in Section~\ref{slope}.
\end{remark}

\begin{proof}
As this proof involves several technical lemmas (which we prove in the next subsections),
we first present its structure 
so that the reader gets a better understanding of the key ideas.
Our proof starts by establishing a link between Young tableaux 
and linear extensions of trees. After that we will be able to conclude via a second link between these trees and 
periodic P\'olya urns. 

Let us begin with Figure~\ref{fig:tabtreeurn} which describes the link 
between the main characters of this proof: the Young tableau~$\Yc$ 
and the ``big'' tree $\cal T$ (which contains the ``small'' tree $\cal S$).
More precisely, we define the rooted planar tree $\cal S$ as follows:
\begin{itemize} 
\item The leftmost branch of~$\cal S$ is a sequence of vertices which we call $v_1, v_2, \dots$  
\item Set $m:=n\ell$. The vertex $\V$ (the one in black in Figure~\ref{fig:tabtreeurn}) has $p-1$ children. 
\item For $2\leq k\leq n-1$, the vertex $v_{k\ell}$ has $p+1$ children.
\item All other vertices $v_j$ (for $j<m, j\neq k\ell$) have exactly one child.
\end{itemize}
\pagebreak

Now, define $\Tc$ as the ``big'' tree obtained from the ``small'' tree $\Sc$ by adding a vertex~$v_0$ as the parent of~$v_1$ 
and adding a set $\Sc'$ of children to~$v_0$. The size of~$\Sc'$ is chosen such that $|\Tc|= 1+|\Sc|+|\Sc'|= 1+N$, 
where $N$ is the number of cells of the Young tableau~$\cal Y$.
Moreover, the hook length of each cell (in grey) in the first row 
of~$\cal Y$ is equal to the hook length\footnote{The hook length of a vertex in a tree is the size of the subtree rooted at this vertex.} 
of the corresponding vertex (in grey also) in the leftmost branch of~$\Sc$.

Let us now introduce a linear extension~$E_{\cal T}$ of~$\cal T$, i.e.~a bijection
from the set of vertices of~$\cal T$ to $\{1, \ldots, N+1\}$ 
such that $E_{\cal T}(u)<E_{\cal T}(u')$ whenever $u$ is an ancestor of $u'$.
A key result, which we prove hereafter in Proposition~\ref{theo:tableautree}, 
 is the following: if $E_{\cal T}$ is a uniformly random linear extension of~$\cal T$, 
then $\Ynv$
(the entry of the south-east corner $v$ in a uniformly random Young tableau $\cal Y$) has the same law as $E_{\cal T}(\V)$: 
\begin{equation}\label{eq1} 1+\Ynv \= E_{\cal T}(\V). \end{equation}

Note that in the statement of the theorem, $\Ynv$ is denoted by $\Yn$ to initially help the reader to follow the dependency on~$n$.

Furthermore, recall that $\cal T$ was obtained from $\cal S$ by adding a root and some children to this root. Therefore, one can obtain
a linear extension of the ``big'' tree\, $\cal T$ from
 a linear extension of the ``small'' tree $\cal S$. 
In Section~\ref{linkyu}, we show that this allows us to construct a uniformly random linear extension~$E_{\cal T}$ of~$\cal T$ 
and a uniformly random linear extension~$E_{\cal S}$ of~$\cal S$ such that
\begin{equation} \label{eq2} |{\cal T} |-E_{\cal T}( \V )
\= n \, (\Ss-E_{\cal S}(\V) + \text{smaller order error terms}). \end{equation} 

The last step, which we prove in Proposition~\ref{prop:urntree}, is that 
\begin{equation} \label{eq3}\quad \Ss-E_{\cal S}(\V) \= \text{distribution of periodic P\'olya urn} + \text{deterministic quantity.} \end{equation}

Indeed, more precisely 
$\Ss-E_{\cal S}(\V)$ has the same law as the number of black balls in a periodic urn
after $(n-1)p$ steps (an urn with period $p$, with parameter~$\ell$, and with initial conditions 
$b_0=p$ and $w_0=\ell$). Thus, our results on periodic urns from Section~\ref{sec:Moments} 
and the conjunction of Equations~\eqref{eq1}, \eqref{eq2}, and \eqref{eq3} give the convergence in law for $\Ynv$ which we wanted to prove.
\end{proof}

The subsequent sections are dedicated to the proofs of the auxiliary propositions that are crucial for the proof of Theorem~\ref{TheoremCorner}. First, we establish
a link between our problem on Young tableaux and a related problem on trees. Second, we explain
the connection between the related problem on trees and the model of periodic urns.

\pagebreak

\begin{figure}[ht!]
		\begin{center}	
			\includegraphics[width=.6\textwidth]{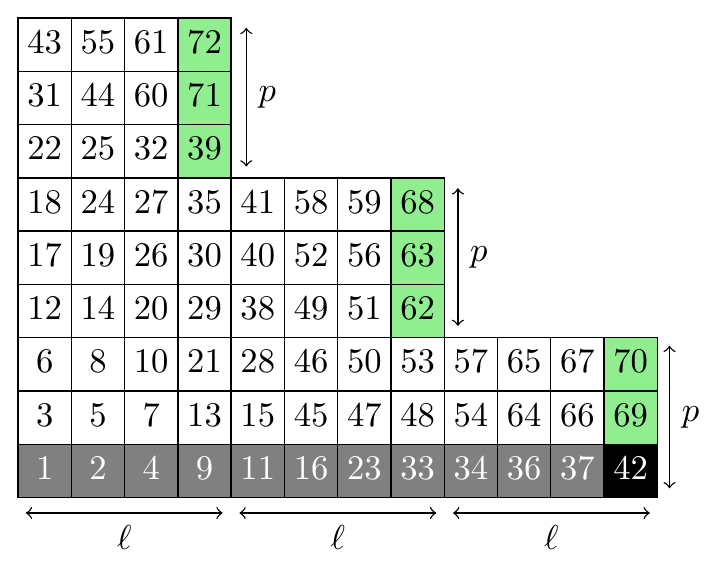} \\ \bigskip \smallskip 
			\includegraphics[width=.87\textwidth]{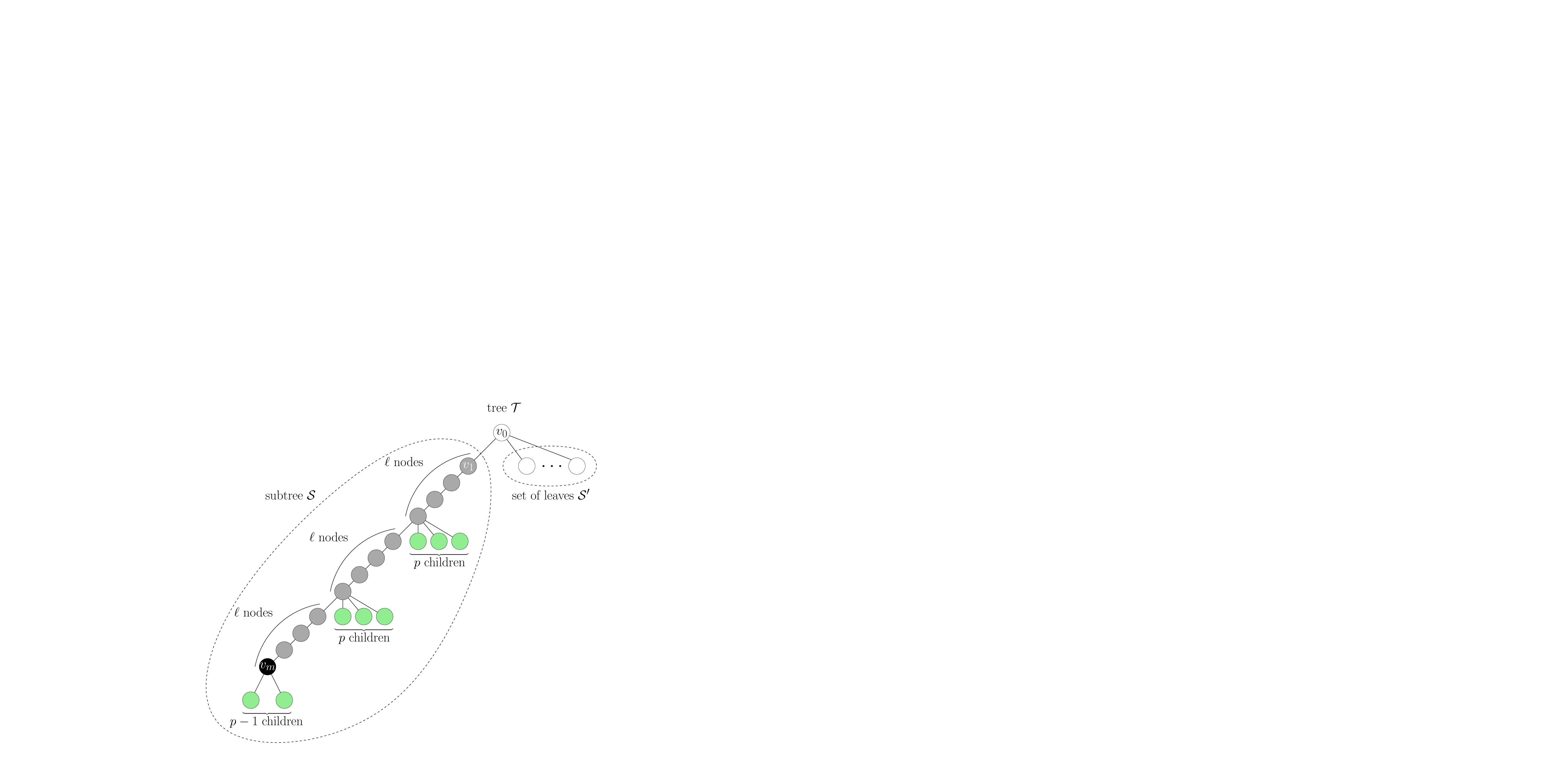}
\end{center} \internallinenumbers
			\caption{In this section, we see that there is a relation between Young tableaux with a given periodic shape, some trees, and the
			periodic Young--P\'olya urns. 
			The key observation is that the cells (in grey) in the first row of the tableaux have the same hook lengths as the nodes (in grey) in the leftmost branch of the tree. 
			The south-east cell $v$ (in black) of this Young tableau has also the same hook length as the node $\V$ (in black) in the tree, 
and is following the same distribution we proved for urns (generalized gamma product distribution).} 		\label{fig:tabtreeurn}
\end{figure}

\clearpage

\subsection{The link between Young tableaux and trees}\label{sec4.1}

We will need the following definitions. 

\begin{definition}[The shape of a tableau\footnote{Some authors define the shape of a tableau as its row lengths from bottom to top. In this article we use the list of column lengths, as it directly gives the natural quantities to state our results in terms of trees and urns.}]\label{ShapeTableau} 
We say that a tableau has \textit{shape} $\lambda_1^{i_1}\cdots \lambda_n^{i_n}$
(with $\lambda_1> \dots > \lambda_n$)
if it has (from left to right) first $i_1$ columns of height $\lambda_1$, etc., and ends with 
$i_n$ columns of height $\lambda_n$.
\end{definition}
As an illustration, the tableau on the top of Figure~\ref{fig:tabtreeurn} has shape $9^4 6^4 3^4$.

\begin{definition}[The shape of a tree] \label{ShapeTree}
Consider a rooted planar
tree $\Tc$ with at least two vertices
and having the shape of a ``comb'': at each level only the leftmost node can have children.
It has \textit{shape} $(i_0,j_0; i_1,j_1; \ldots; i_n, j_n)$ if
\begin{itemize}
\item
when $n=0$, then $\Tc$ is the tree with $j_0$ leaves and $i_0$ internal nodes, all of them unary except for the last one which has $j_0$ children;
\item
when $n\geq 1$, then $\Tc$ is the tree with \textit{shape} 
$(i_0,j_0; i_1,j_1; \ldots; i_{n-1}, j_{n-1})$ to which we attach a tree of shape $(i_n,j_n)$ as a new leftmost subtree to
the parent of the leftmost leaf.
\end{itemize}
\end{definition}

Figure~\ref{fig:treeshapeex} illustrates the recursive construction of a tree of shape $(1,4; 1,2; 2,2)$.
As another example, the tree~$\Tc$ in Figure~\ref{fig:tabtreeurn} has shape $(1,|\Sc'|;4,3 ;4,3; 4,2)$, where $|\Sc'|$ stands for the number of leaves in $\Sc'$.	

\begin{figure}[hb!]
		\begin{center}	
			\includegraphics[width=.78\textwidth]{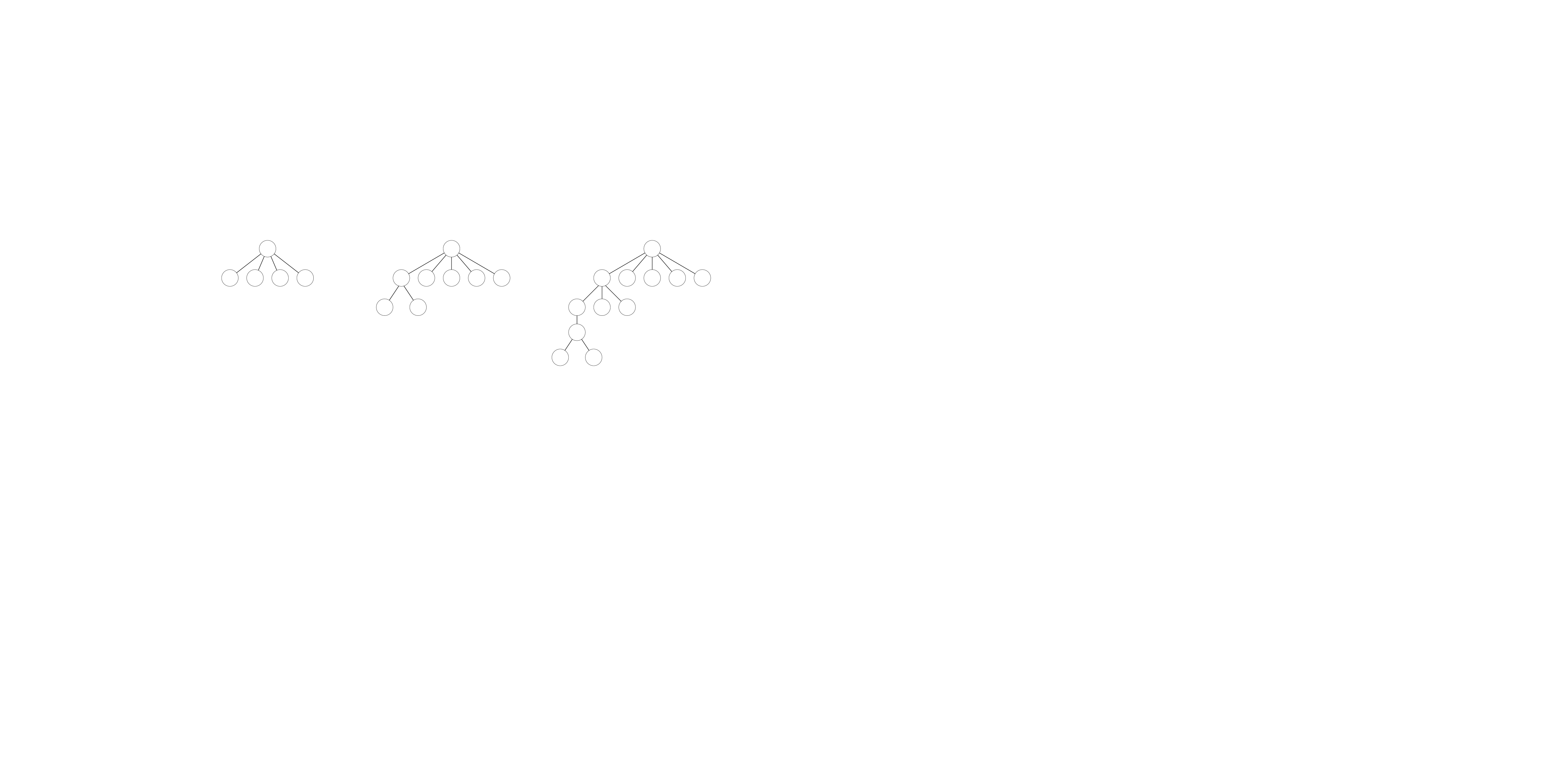}\internallinenumbers
			\caption{The recursive construction of a tree of shape $(1,4; 1,2; 2,2)$. First, a tree of shape $(1,4)$, second, a tree of shape $(1,4; 1,2)$, third, a tree of shape $(1,4; 1,2; 2,2)$.} \label{fig:treeshapeex}
		\end{center}
\end{figure}

Let us end this small collection of definitions with a more classical one:
\begin{definition}[Linear extension of a poset and of a tree] \label{LinearExtension}
A \textit{linear extension}~$E$ of a poset~$\Ac$ of size $N$ 
is a bijection between this poset and $\{1,\ldots, N\}$ satisfying $E(u)\leq E(v)$ whenever $u\leq v$. 
Accordingly, a linear extension of a tree $\Ac$ with $N$ vertices
is a bijection $E$ between the vertices of $\Ac$ and $\{1,\ldots, N\}$ 
satisfying $E(u)\leq E(v)$ whenever $u$ is a child of~$v$.\newline We denote by $\ext(\Ac)$ the number of linear extensions of $\Ac$.
\end{definition}

\begin{remark}
	In combinatorics, a linear extension is also called an increasing labelling. In the sequel, we will sometimes say ``(increasing) labelling'' instead of ``linear extension'',
hoping that this less precise terminology will help the intuition of the reader.
\end{remark}

We are now ready to state the following result:
\begin{proposition}[Link between the south-east corner of Young tableaux and linear extensions of trees]\label{theo:tableautree} 
\gdef\propclaim{Fix 
a tableau with shape $\lambda_1^{i_1} \cdots \lambda_n^{i_n}$ and consider
a random uniform Young tableau~$\Yc$ with this given shape. Let $E_{\Yc}(v)$ 
be the entry of the south-east corner of this Young tableau.
Let $\Tc$ be a tree with shape 
$(1,N-m-\lambda_1+1;$ $i_1,\lambda_1-\lambda_2; i_2, \lambda_2-\lambda_3; \ldots; i_n,\lambda_n-1)$,
where $N=\sum \lambda_k i_k$ is the size of the tableau $\Yc$ and $m=i_1 +\dots +i_n$
is the number of its columns.
Let $E_\Tc$ be a random uniform linear extension of~$\Tc$,
and $\V$ be the $m$-th vertex in the leftmost branch of this tree~$\Tc$.
Then $E_\Tc(\V)$ and $1+E_{\Yc}(v)$ have the same law.}
\propclaim 
\end{proposition}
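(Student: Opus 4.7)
The plan is to compute both probabilities via the hook length formula and reduce the claim to a single combinatorial identity.

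\textbf{Step 1 (hook length bookkeeping).} A direct subtree-size computation in $\Tc$ gives $h_\Tc(v_0)=N+1$, $h_\Tc(v_j)=(m-j)+\lambda_{t(j)}$ for $j=1,\ldots,m$ (where $t(j)$ is the column-group index of column $j$ in $\lambda$), and $h_\Tc\equiv 1$ on every leaf. Since $h_\lambda((1,j))=(m-j)+\lambda_{t(j)}$, the leftmost-branch hook lengths of $\Tc$ match the first-row hook lengths of $\lambda$, and the hook length formulas for trees and Young tableaux yield
\[
\ext(\Tc)=\frac{N!}{\prod_{j=1}^m h_\lambda((1,j))},\quad f^\lambda=\frac{N!}{\prod_{u\in\lambda}h_\lambda(u)},\quad \frac{\ext(\Tc)}{f^\lambda}=\prod_{u\notin\text{row }1}h_\lambda(u).
\]

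\textbf{Step 2 (the tree complement is an antichain).} For any order ideal $S$ of $\Tc$ containing $v_0,\ldots,v_{m-1}$ and excluding $\V$, the complement $\Tc\setminus S\setminus\V$ consists entirely of isolated leaves: the $\lambda_n-1$ children of $\V$ together with the un-selected leaves from each bunch attached to $v_0,v_{i_1},\ldots,v_{i_1+\cdots+i_{n-1}}$. Hence $\ext(\Tc\setminus S\setminus\V)=(N-k)!$, and decomposing linear extensions of $\Tc$ according to which vertices receive labels in $\{1,\ldots,k\}$ yields
\[
|\{\text{LEs of }\Tc:E_\Tc(\V)=k+1\}|=(N-k)!\sum_{S}\ext(S),
\]
where $S$ ranges as above.

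\textbf{Step 3 (reduction to a single identity).} The analogous decomposition for Young tableaux gives $|\{\Yc:E_\Yc(v)=k\}|=\sum_\mu f^\mu\cdot f^{\lambda/(\mu\cup v)}$, where $\mu$ ranges over Young sub-diagrams of $\lambda$ with $|\mu|=k-1$, $\mu\supseteq\{(1,1),\ldots,(1,m-1)\}$, and $v\notin\mu$. Combining with Steps 1 and 2, the desired equality $\Pr(E_\Tc(\V)=k+1)=\Pr(E_\Yc(v)=k)$ becomes equivalent to
\[
(N-k)!\sum_{S}\ext(S)=\Big(\prod_{u\notin\text{row }1}h_\lambda(u)\Big)\sum_{\mu}f^\mu\cdot f^{\lambda/(\mu\cup v)}.
\]

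\textbf{Main obstacle.} The identity above pairs a hypergeometric-type sum over free leaf-compositions $(s_0,\ldots,s_{n-1})$ of $k-m$ (parametrising $S$, with $\ext(S)=k!/\prod_i h_S(v_i)$ by hook lengths) against a sum over Young sub-diagrams whose weights involve the skew-tableau count $f^{\lambda/(\mu\cup v)}$, for which no simple product formula exists. I expect to settle it by induction on $|\lambda|$: peel a convex corner $c$ of $\lambda$ and simultaneously remove a leaf from the bunch of $\Tc$ attached to the vertex indexed by $c$'s column group, then verify via a short hook-length calculation that the paired removal probabilities agree. The inductive hypothesis closes the argument; the boundary case $c=v$ (possible only when $\lambda_n=1$) contributes directly to $k=N$. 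An alternative route would invoke the Naruse hook length formula for skew shapes and exhibit the required cancellations by direct algebra.
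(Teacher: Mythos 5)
Your Step 1 isolates exactly the right structural fact (the first-row hook lengths of $\lambda$ coincide with the leftmost-branch hook lengths of $\Tc$), and Steps 2--3 correctly reduce the claim to an identity between two point-probability sums. But the proof stops there: the ``Main obstacle'' paragraph is a plan, not an argument, and the identity you defer is the entire content of the proposition. Worse, the induction you sketch does not set up correctly. Removing a convex corner $c$ from $\lambda$ changes the shape signature $\lambda_1^{i_1}\cdots\lambda_n^{i_n}$ (one $i_k$ drops by one and a new column-height class $\lambda_k-1$ appears), so the tree that the proposition associates with $\lambda\setminus c$ is \emph{not} $\Tc$ minus one leaf from the $k$-th bunch: deleting such a leaf changes the height difference $j_k=\lambda_k-\lambda_{k+1}$, which corresponds to lowering an entire block of $i_k$ columns, not a single cell. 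There is no one-cell/one-vertex coupling of the two removal processes, so ``verify that the paired removal probabilities agree'' has no meaning as stated; making it work would require comparing $f^{\lambda\setminus c}/f^{\lambda}$ with a sum over leaf classes of $\Tc$, which is again an identity of the same difficulty as the one you are trying to prove.

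The paper avoids this comparison of point probabilities altogether. It works with the order polytopes of $\Yc$ and of $\Tc$ and the (polynomial) densities $g_M$ and $h_{v_m}$ of the corner coordinate in each; the discrete laws are recovered from these densities by Lemma~\ref{lemma:fact}. The proportionality $h_{v_m}=c\,g_M$ is then proved in Lemma~\ref{lemma:filament} by attaching a filament of length $L$ to the corner of the tableau and to $v_m$ in the tree, and evaluating the number of linear extensions of each extended object in two ways: once as $(N+L)!\int_0^1 g_M(y)(1-y)^L\,dy/L!$ (resp.\ the analogue for the tree), and once via the hook length formula, where only first-row (resp.\ leftmost-branch) hooks change and these changes agree by your Step~1 observation. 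Equality of $\int_0^1 g_M(y)(1-y)^L\,dy$ and $c^{-1}\int_0^1 h_{v_m}(y)(1-y)^L\,dy$ for \emph{all} $L\geq 1$ forces the two polynomials to be proportional. If you want to complete your route, you need either to prove your skew-shape identity directly (the Naruse formula will not give an obvious telescoping) or to switch to a continuous argument of this ``moment'' type; as written, the proposal has a genuine gap at its central step.
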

\begin{proof}
The proof will be given on page~\pageref{proof:theo:tableautree},
as it requires two ingredients, which have their own interest 
and which are presented in the two next sections (\ref{densityYoung} on the density method for Young tableaux, 
and \ref{densitytrees} on the density method for trees).
\end{proof}

\begin{example}
	Let us apply the previous result to the tree of shape $(1,4;1,2;$ $2,2)$ from Figure~\ref{fig:treeshapeex}.
	There we have $n=2$, $m=3$.
	Then, this tree corresponds to a Young tableau of shape $5^1 3^2$ and size $N=11$.
\end{example}

\begin{remark}
In the simplest case when the tableau is a rectangle (i.e.~it has shape $\lambda_1^{i_1}$), the associated tree has shape
$(1,(\lambda_1-1)(i_1-1); i_1,\lambda_1-1)$. In that case, the law of $E_\Tc(\V)$ is easy to compute and we get
an alternative proof of the following formula, first established in~\cite{Marchal16}:
\begin{equation*}
\Prob(E_{\Yc}(v)=k)=\frac{\binom{k-1}{i_1-1}\binom{\lambda_1\,i_1\,-\,k}{\lambda_1-1}}
{\binom{\lambda_1\,i_1}{\lambda_1+i_1-1}}.
\end{equation*}
\end{remark}

The fact that $\Yc$ and $\Tc$ are related is obvious from the construction of~$\Tc$,
but it is not a priori granted that it will lead to a simple, nice link
between the distributions of~$v$ and~$v_m$ (the two black cells in Figure~\ref{fig:tabtreeurn}). 
So, $E_\Tc(\V) \= 1+E_{\Yc}(v)$ deserves a detailed proof: 
it will be the topic of the next subsections. The proof has a nice feature: it uses a generic method, which we call the \textit{density method}
and which was introduced in our articles~\cite{Marchal18,BanderierMarchalWallner18a}.
In fact, \textit{en passant}, these next subsections also illustrate the efficiency of the density method 
in order to enumerate (and to perform uniform random generation)
of combinatorial structures (like we did in the two aforementioned articles for permutations with some given pattern, 
or rectangular Young tableaux with ``local decreases'').

The advantage of Proposition~\ref{theo:tableautree} is that linear extensions of a tree are 
easier to study than Young tableaux and can, in fact, be related to our periodic urn models, as 
shown in Section~\ref{densitytrees}.

\subsection{The density method for Young tableaux}\label{densityYoung}

Trees and Young tableaux can be viewed as posets~\cite{Stanley86}.
We will use this point of view to prove
Proposition~\ref{theo:tableautree}. We recall here some general facts
that will be useful in the sequel.

\begin{definition}[Order polytope of a poset]
Let $\mathcal{A}$ be a general poset with cardinality $N$ and order relation $\leq$. 
We can associate with $\mathcal{A}$ a polytope $\Pc \subset[0,1]^{\mathcal{A}}$ 
defined by the condition $(Y_e)_{e \in \mathcal{A}}\in \Pc$ if and only if $Y_e\leq Y_{e'} $ whenever $e\leq e'$. 
Then $\Pc$ is called the \textit{order polytope} of the poset~$\mathcal{A}$.
\end{definition}

\begin{example}
	\label{ex:orderpoly}
	Let $\Ac$ be the set of subsets of $\{a,b\}$ ordered by inclusion.
	Then its order polytope is given by $\Pc = \{ (Y_{\emptyset},Y_{\{a\}},Y_{\{b\}},Y_{\{a,b\}}) \in [0,1]^4 : $ $Y_{\emptyset} \leq Y_{\{a\}},$ $Y_{\emptyset} \leq Y_{\{b\}},$ $Y_{\emptyset} \leq Y_{\{a,b\}},$ $Y_{\{a\}} \leq Y_{\{a,b\}},$ $Y_{\{b\}} \leq Y_{\{a,b\}} \}$.
\end{example}

Let $Y=(Y_e)_{e \in \mathcal{A}} \in [0,1]^{\mathcal{A}}$ be a tuple of random variables\footnote{When the poset is a Young tableau, this corresponds to what is called a Poissonized Young tableau in~\cite{GorinRahman18}.} chosen according to the uniform measure on the polytope $\Pc$.
Then we consider the function $X$ having integer values, defined by $X_e:=k$ if $Y_e$ is the $k$-th smallest real in the set of reals $\{Y_e : e\in \mathcal{A}\}$. It is sometimes called order statistic. Note that $X$ is a random variable, defined almost surely as we have a zero probability that some marginals of~$Y$ have the same value,
and $X$ is uniformly distributed on the set of all linear extensions of~$\mathcal{A}$. 
The last claim holds because the wedges of each linear extension have equal size $1/N!$ for $N = |\Ac|$ being the size of the poset $\Ac$.

\begin{example}
	Continuing Example~\ref{ex:orderpoly}, there are two linear extensions of $\Ac$: $(X_{\emptyset},$ $X_{\{a\}},$ $X_{\{b\}},$ $X_{\{a,b\}})$ $= (1,2,3,4)$ and $(X_{\emptyset},$ $X_{\{a\}},$ $X_{\{b\}},$ $X_{\{a,b\}})$ $= (1,3,2,4)$. 	
	They correspond to the following two wedges in $\Pc$: $Y_{\emptyset} \leq Y_{\{a\}} \leq Y_{\{b\}} \leq Y_{\{a,b\}}$ and $Y_{\emptyset} \leq Y_{\{b\}} \leq Y_{\{a\}} \leq Y_{\{a,b\}}$. 
	The volume of each of them is $1/24$, while the volume of $\Pc$ is $1/12$.
\end{example}

Conversely, if $X$ is a random uniform increasing labelling of $\mathcal{A}$, one gets a random variable~$Y$ on the polytope $\Pc$ via 
$Y_e:=T_{X_e}$, where $T$ is a random uniform $N$-tuple from the set $\left\{ (T_1,\dots,T_N) \in [0,1]^N :T_1 < \dots <T_N \right\}$.
Therefore, $Y$ is uniformly distributed on $\Pc$.
What is more, $T_k$ is the $k$-th largest uniform random variable among $N$ 
independent uniform random variables. 
Thus, it has density
$ k\binom{N}{k} x^{k-1}(1-x)^{N-k}$. 
As a consequence, for any~$e\in\mathcal{A}$, $Y_e$ has density
\begin{equation}
\label{eq:orderstat2}
g_e(x)=\sum_{k=1}^{N} \PR(X_e=k) k \binom{N}{k} x^{k-1}(1-x)^{N-k}.
\end{equation}

\pagebreak

This formula can be read as two different writings of the same polynomial in two different bases;
thus, by elementary linear algebra, it implies that $\PR(X_e=k)$ can be deduced from the polynomial~$g_e$.
In particular, we have the following property:

\begin{lemma}
\label{lemma:fact}
Let $\mathcal{A}$, $\mathcal{A}'$ be two posets with the same cardinality, and let $\Pc$, $\Pc'$ be their respective order polytopes. 
Let $X$ (resp. $X'$) be a random linear extension of~$\mathcal{A}$ (resp. $\mathcal{A}'$).
Let $Y$ (resp. $Y'$) be a uniform random variable on $\Pc$ (resp. $\Pc'$).
Then, for any $e \in \Ac$ and $e' \in \Ac'$, such that $Y_e$ and $Y'_{e'}$ have the same density, $X_e$ and $X'_{e'}$ have the same law.
\end{lemma}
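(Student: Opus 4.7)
My plan is to deduce the lemma as an immediate consequence of formula \eqref{eq:orderstat2} combined with a linear-independence argument.

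First I would apply \eqref{eq:orderstat2} to both posets. Since $|\mathcal{A}|=|\mathcal{A}'|=N$, the density of $Y_e$ is
\begin{equation*}
g_e(x) \;=\; \sum_{k=1}^{N} \PR(X_e = k)\, k\binom{N}{k} x^{k-1}(1-x)^{N-k},
\end{equation*}
and the density of $Y'_{e'}$ is given by the same formula with $X_e$ replaced by $X'_{e'}$. Crucially, the two expansions are in the same family of polynomials $\bigl(p_k(x)\bigr)_{k=1}^{N}$, where $p_k(x) := k\binom{N}{k}x^{k-1}(1-x)^{N-k}$, because this family depends only on~$N$ and not on the underlying poset.

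The second step is to note that $(p_k)_{1\le k\le N}$ is a basis of the space of polynomials of degree at most $N-1$. Indeed, up to nonzero normalizing constants, these are precisely the Bernstein basis polynomials $\binom{N-1}{k-1}x^{k-1}(1-x)^{N-k}$ for $0\le k-1\le N-1$, whose linear independence is standard (one quick argument: evaluate successive derivatives at $x=0$ to see that $p_k$ has valuation exactly $k-1$, so the family is triangular in the monomial basis and thus free).

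By hypothesis $g_e = g'_{e'}$ as functions on $[0,1]$, hence as polynomials. Unique representation in the basis $(p_k)$ forces
\begin{equation*}
\PR(X_e = k) \;=\; \PR(X'_{e'} = k) \qquad \text{for every } k\in\{1,\dots,N\},
\end{equation*}
which is exactly the claim that $X_e$ and $X'_{e'}$ have the same law. No substantive obstacle arises: once formula \eqref{eq:orderstat2} is in hand, the lemma reduces entirely to the basic fact that the (rescaled) Bernstein polynomials form a basis, so the proof is essentially a two-line linear-algebra argument packaged on top of the density interpretation.
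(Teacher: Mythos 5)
Your argument is exactly the paper's: the lemma is deduced from formula \eqref{eq:orderstat2} together with the observation that the polynomials $k\binom{N}{k}x^{k-1}(1-x)^{N-k}$ form a basis (the paper phrases this as ``two different writings of the same polynomial in two different bases''), so equal densities force equal distributions. Your proposal is correct and simply makes the linear-independence step explicit via the Bernstein/valuation argument.
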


Let $\Yc$ be a tableau with shape $\lambda_1^{i_1} \dots \lambda_n^{i_n}$ and total size $N=\sum_k {\lambda_k i_k}$.
We view $\Yc$ as a poset: $\Yc$ is a set of $N$ cells equipped
with a partial order ``$\leq$'', where $c\leq c'$ if one can go from~$c$ to $c'$ with only north and east steps. 
We denote by $\mathcal{P}$ the order polytope of the tableau $\Yc$.

We will introduce an algorithm generating a random element of $\mathcal{P}$ according to the uniform measure.
In order to do so, we fill the diagonals one by one. Let us introduce some notation.
The tableau $\Yc$ can be sliced into $M=\lambda_1+i_1 +\dots +i_n-1$ diagonals $D_1,\ldots, D_M$ as follows: 
$D_1$ is the north-west corner and recursively, $D_{k+1}$ is the set of cells which are adjacent to one of the cells of
$D_1\cup\dots \cup D_k$ and which are not in $D_1\cup\dots \cup D_k$. In particular,
$D_M$ is the south-east corner. For example, Figure~\ref{fig:tabtreeurn} has $M=20$ such diagonals.

Note that between two consecutive diagonals $D_k$ and $D_{k+1}$ (let us denote their cell entries by $y_1 < \cdots < y_j$ and $x_1< \dots < x_{j'}$), 
there exist four different interlocking relations illustrated by Figure~\ref{fig:diagtypes}. 
The shape of the tableau implies that for each~$k$ we are in one of these four possibilities, each of them thus corresponds to a polytope $\mathcal{P}_k$ defined as:

\vspace{-3mm}
\begin{align}
	\label{case1} \text{ case 1:} &&& \mathcal{P}_k := \{ y_1< x_1< \dots< y_j< x_j \}, \\
	\label{case2} \text{ case 2:} &&& \mathcal{P}_k := \{ x_1< y_1< \dots< x_j< y_j\}, \\
	\label{case3} \text{ case 3:} &&& \mathcal{P}_k := \{ y_1< x_1< \dots< x_{j-1} < y_j\}, \\
	\label{case4} \text{ case 4:} &&& \mathcal{P}_k := \{ x_1< y_1< \dots< x_j< y_j< x_{j+1}\}. 
\end{align}

\begin{figure}[ht!]
		\begin{center}	
			\includegraphics[width=.23\textwidth]{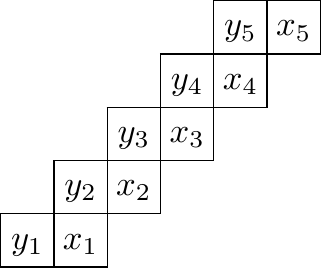} \,
			\includegraphics[width=.23\textwidth]{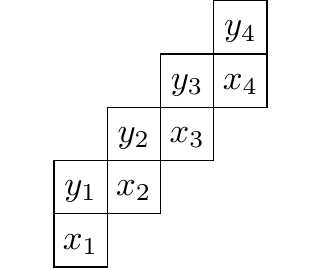} \, 
			\includegraphics[width=.23\textwidth]{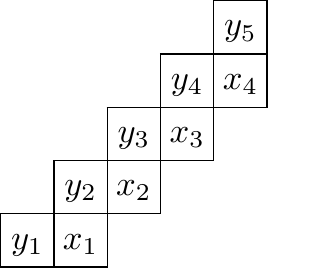} \,
			\includegraphics[width=.23\textwidth]{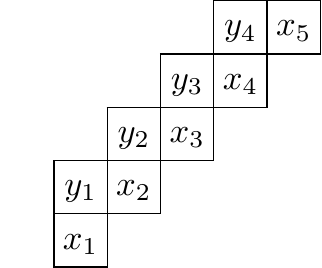} \internallinenumbers
			\caption{Young tableaux of any shape can be generated by a sequence of ``diagonals'', which interlock according to the four possibilities above.} 
			\label{fig:diagtypes}
		\end{center}
\end{figure}

Our algorithm will make use of conditional densities along the $M$ diagonals of $\Yc$.
For this purpose, for every $k\in \{1,\ldots,M\}$ we define a polynomial 
$g_k$ in $|D_k|$ variables as follows.
First, one sets $g_1:=1$; the next polynomials are defined by induction. 
Suppose that $1\leq k\leq M-1$ and $D_k=(y_1<\dots< y_j)$.
The four above-mentioned possibilities for $D_{k+1}$ lead to the definition 
of the following polynomials.
\begin{enumerate}
\item
In the first case (interlocking given by~\eqref{case1}), this gives
\begin{equation}
g_{k+1}(x_1,\ldots, x_j) :=
\int_{0}^{x_1} dy_1 \int_{x_1}^{x_2}dy_2 \ldots \int_{x_{j-1}}^{x_j}dy_{j} \, 
g_k (y_1,\ldots, y_j).
\end{equation}
\item
In the second case (interlocking given by~\eqref{case2}), this gives
\begin{equation}
g_{k+1}(x_1,\ldots, x_j) :=
\int_{x_1}^{x_2} dy_1 \int_{x_2}^{x_3}dy_2 \ldots \int_{x_{j-1}}^{x_j}dy_{j-1}
\int_{x_j}^{1}dy_j \, g_k (y_1,\ldots, y_j).
\end{equation}
\item
In the third case (interlocking given by~\eqref{case3}), this gives
\begin{equation}
g_{k+1}(x_1,\ldots, x_{j-1}) :=
\int_{0}^{x_1} dy_1 \int_{x_1}^{x_2}dy_2 \ldots \int_{x_{j-1}}^{1}dy_{j} \, 
g_k (y_1,\ldots, y_j).
\end{equation}
\item
In the fourth case (interlocking given by~\eqref{case4}), this gives
\begin{equation}
g_{k+1}(x_1,\ldots, x_{j+1}) :=
\int_{x_1}^{x_2} dy_1 \int_{x_2}^{x_3}dy_2 \ldots \int_{x_{j}}^{x_{j+1}}dy_{j} \, 
g_k (y_1,\ldots, y_j).
\end{equation}
\end{enumerate}

Now, we use these polynomials to formulate a random generation algorithm which will also be able to enumerate 
the corresponding Young tableaux. Note that faster random generation algorithms are known
(like the hook walk from~\cite{GreeneNijenhuisWilf84}), 
but it is striking that the above polynomials~$g_k$ will be the key 
to relate the distributions of different combinatorial structures, allowing us to capture
second order fluctuations in Young tableaux, trees, and urns.
It is also noteworthy that our density method is in some cases 
the most efficient way to enumerate and generate combinatorial objects 
(see~\cite{BanderierMarchalWallner18a} for applications 
on variants of Young tableaux, where the hook length formula is no more available,
and see~\cite{Devroye86} for algorithmic subtleties related to sampling conditional multivariate densities).

Recall that $\mathcal{P}$ is
the order polytope of the tableau $\Yc$ and that we want to 
generate a random element of $\mathcal{P}$ according to
the uniform measure. 
The algorithm is the following. We generate by descending induction
on $k$, for each diagonal $D_k$, a $|D_k|$-tuple of reals in 
$[0,1]$ which will be the entries of the cells of $D_k$.

First, remark that the functions defined by~\eqref{algo1Gm} and~\eqref{conditionaldensity}
in Algorithm~\ref{algo1} 
are indeed probability densities. 
That is, they are measurable, positive functions and their integral is equal to~$1$. 
To prove this, remark first that these functions are polynomials and therefore measurable. 
Next, by definition, as integrals of positive functions, they are positive. 
Finally, the fact that the integral is equal to~$1$ follows from their definition. 

\pagebreak

\begin{coloralgorithm}[a random uniform Young tableau~$\Yc$, via the density method]
\label{algo1}
\begin{itemize} 
\item[Step 1.] 
Recall that $D_M$ is the south-east corner. Generate the corresponding cell entry at random with probability density 
\begin{equation}\label{algo1Gm}
\frac{g_M(x)}{\int_0^1 g_M(y) \, dy}.
\end{equation}
\item[Step 2.]
By descending induction on $k$ from $M-1$ down to $1$, 
generate the diagonal $D_k$ (seen as a tuple of $|D_k|$ reals in $[0,1]$) according to the density
\begin{equation}\label{conditionaldensity}
\frac{g_k(x_1,\ldots, x_{|D_k|}) } {g_{k+1}(D_{k+1})} {\bf 1}_{\Pc_k},
\end{equation}
where $g_k$ and ${\bf 1}_{\Pc_k}$ are chosen according to the cases given by~\eqref{case1}, \eqref{case2}, \eqref{case3}, \eqref{case4}.
\end{itemize}
\end{coloralgorithm}

\medskip

We then claim that Algorithm~\ref{algo1} 
yields a random element $(D_1,\ldots, D_M)$ of $\mathcal{P}$ with the uniform measure. 
Indeed, by construction, its density is the product of the conditional densities of the diagonals~$D_1,\ldots, D_M$. 
The crucial observation now is that the product of the conditional densities~\eqref{conditionaldensity} is a telescopic product,
so the algorithm generates each Young tableau $\Yc$ with the same ``probability'' (or more rigorously, as we have continuous variables, with the same \textit{density}):
\begin{equation}\label{telescop}
\frac{g_M(D_M)}{\int_0^1 g_M(y) \, dy}
 \prod_{k=1}^{M-1} \frac{g_k(D_k) } {g_{k+1}(D_{k+1})}{\bf 1}_{\Pc_k}= 
\frac{{\bf 1}_{\{\Yc \in \mathcal{P}\}}}{\int_0^1 g_M(y) \, dy}.
\end{equation}

This indeed means that our algorithm yields a uniform random variable on the order polytope~$\mathcal{P}$.
Alternatively, one can say that the Young tableau $\Yc$ is a random variable on $[0,1]^N$ with density given by~\eqref{telescop}, therefore
\begin{equation}
\int_{[0,1]^N} {\bf 1}_{\{Z\in \mathcal{P}\}} \, dZ=\int_0^1 g_M(y) \, dy.
\end{equation}
Now, suppose that we pick uniformly at random an element $Z'$ of $[0,1]^N$. Then one has
\begin{equation}
\PR(Z' \in \mathcal{P}) \,\,=\,\, \int_{[0,1]^N} {\bf 1}_{\{Z\in \mathcal{P}\}} \, dZ
\,\,=\,\, \frac{\fill(\Yc)}{N!},
\end{equation}
where $\fill(\Yc)$ is the number of increasing labellings (linear extensions) of the tableau $\Yc$. Thus, 
\begin{equation*}
\fill(\Yc)=N!\int_0^1 g_M(y) \, dy.
\end{equation*}

In the next section, we turn our attention to the density method for trees.

\subsection{The density method for trees}\label{densitytrees} 

Let the tree $\Tc$, its subtree $\Sc$, and the vertices $v_0,\dots,v_m$ be defined as on page~\pageref{eq1} (see Figure~\ref{fig:tabtreeurn}).
As in Section~\ref{densityYoung}, it is possible to construct a random linear extension of~$\Sc$ by using a uniform random variable~$Y$ on the order polytope of~$\Sc$. 
The vertex $v_m$ has then a random value $Y_{v_m}$ between~$0$ and~$1$, and we want to compute its density. 
To this aim, we associate to each internal node~$v_k$ a polynomial $f_k$ (in
$\sigma_k$ variables, where $\sigma_k$ is the number of siblings of~$v_k$). 
These polynomials $f_k$ are defined by induction starting with $f_1:=1$, while $f_2,\dots, f_{m-1}$ are defined by
\begin{equation}
f_{k}(x_0,\ldots, x_{\sigma_{k}}):=\int_0^{\inf \{x_0,\ldots, x_{\sigma_{k}}\}}dy_0 \int_0^1dy_1\ldots \int_0^1dy_{\sigma_{k-1}} \, f_{k-1}(y_0,y_1,\ldots, y_{\sigma_{k-1}}),
\end{equation}
The last polynomial, $f_m$, additionally depends on the number $j$ of children of~$v_m$:
\begin{align}
\label{h}\qquad
\begin{aligned}
 &f_m(x_0,\ldots, x_{\sigma_m}) := \\ 
                 &(1-x_0)^{j} \int_0^{\inf \{x_0,\ldots, x_{\sigma_m}\}} dy_0 \int_0^1 \! dy_1\ldots \int_0^1 \! dy_{\sigma_{m-1}} \, f_{m-1}(y_0,y_1,\ldots, y_{\sigma_{m-1}}).
\end{aligned}
\end{align} 
\gdef\hvm{h_{v_m}}
We also define $\hvm$:
\begin{equation}
\hvm(x):= \int_0^1 dx_1 \ldots \int_0^1dx_{\sigma_m} \, f_m(x,x_1,\ldots, x_{\sigma_m}).
\end{equation}
We claim that $\hvm(x)$ is (up to a multiplicative constant) the density of $Y_{v_m}$. 
This is shown as in Section~\ref{densityYoung} using 
Algorithm~\ref{algo2}, which generates uniformly at random a labelling of $\Sc$.

\begin{coloralgorithm}[a random uniform increasing labelling $Y$ of the tree~$\Sc$]
\label{algo2}
\begin{itemize}
\item[Step 1.]
Generate $Y_{v_m}$ according to the density 
$\frac{\hvm(x)}{\int_0^1 \hvm(x) \, dx}$.
\item[Step 2.]
If $v_m$ has $j$ children 
$s_1,\ldots, s_{j}$, then
generate $(Y_{ s_1},\ldots, Y_{s_j})$ according to the density
\begin{equation}
\frac{\prod_{i=1}^j {\bf 1}_{\{y_i> Y_{v_m}\}}}{(1-Y_{v_m})^j}.
\end{equation}
\item[Step 3.]
If $v_m$ has $j$ siblings $s_1,\ldots, s_j$, 
then generate $(Y_{ s_1},\ldots, Y_{s_j})$ according to the density
\begin{equation}
\frac{ f_m(Y_{v_m}, y_1,\ldots, y_j)} {\int_0^1dy_1\ldots \int_0^1dy_j \, f_m(Y_{v_m}, y_1,\ldots, y_j)}.
\end{equation}
\item [Step 4.]
By descending induction for $k$ from $m-1$ down to 1, if $v_k$ has $j$ siblings $s_1,\ldots,s_j$, then generate the tuple ${\bf Y}_k=(Y_{ v_k}, Y_{s_1},\ldots, Y_{s_j})$
according to the density
\begin{equation}
\frac{f_k(y_0,\ldots, y_j)} 
{f_{k+1}({\bf Y}_{k+1})}
{\bf 1}_{\{ y_0 < \min {\bf Y}_{k+1}\}}.
\end{equation}
\end{itemize}
\end{coloralgorithm}

\pagebreak

Indeed, the random tuple $Y$ generated by this algorithm is by construction an element of the order polytope.
What is more, we have the uniform distribution, as the probabilities of all $Y$'s are equal
to a telescopic product similar to Formula~\eqref{telescop}. 
Therefore $h_m(x)$ is (up to a multiplicative constant) the density of $Y_{v_m}$ and the number $\ext(\Sc)$ of linear extensions of~$\Sc$ is given by \vspace{-2mm}
\begin{equation}
\ext(\Sc)=|\Sc|!\int_0^1 \hvm(x) \, dx.
\end{equation}

\vspace{-2mm}

It remains to connect the densities of~$v$ in $\Yc$ and $v_m$ in $\Sc$; we do this in the following lemma.

\begin{lemma}
	\label{lemma:filament}
	The polynomial $g_M(x)$ (which gives the density of~$v$, the south-east corner of the Young tableau $\Yc$)
 and the polynomial $\hvm(x)$ (which gives the density of~$v_m$ in the tree $\Sc$) are equal up to a multiplicative constant:
	\begin{align*}
		\hvm(x) &= c\, g_M(x) & \text{ with } &&
		c &= \frac{|\Yc|!}{|\Sc|!}\frac{\ext(\Sc)}{\fill(\Yc)}.
	\end{align*}
\end{lemma}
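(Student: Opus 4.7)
The statement decomposes into two parts: the proportionality $h_{v_m}(x) = c\cdot g_M(x)$, and the explicit value of the constant $c$. The second part is an immediate consequence of the first: integrating both sides of $h_{v_m}(x) = c\cdot g_M(x)$ over $[0,1]$ and invoking the two volume identities already established in Sections~\ref{densityYoung} and~\ref{densitytrees}, namely $\fill(\Yc) = |\Yc|!\int_0^1 g_M(y)\,dy$ and $\ext(\Sc) = |\Sc|!\int_0^1 \hvm(y)\,dy$, forces
\[ c \;=\; \frac{\int_0^1 \hvm(y)\,dy}{\int_0^1 g_M(y)\,dy} \;=\; \frac{|\Yc|!}{|\Sc|!}\cdot\frac{\ext(\Sc)}{\fill(\Yc)}, \]
which is the announced value. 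Hence the substantive content of the lemma is the proportionality itself.

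Since $g_M/\!\int g_M$ and $\hvm/\!\int \hvm$ are, by construction, the marginal densities on $[0,1]$ of the distinguished coordinates (namely $Y_v$ under the uniform measure on~$\mathcal{P}$ for the tableau, and $Y_{v_m}$ under the uniform measure on the order polytope of~$\Sc$ for the tree), proving proportionality of $\hvm$ and $g_M$ is tantamount to showing that these two normalized marginal densities coincide pointwise as functions of $x\in[0,1]$.

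My strategy is to match, step by step, the two iterated integration procedures that produce the densities. On the tableau side, $g_M$ is obtained by reading Algorithm~\ref{algo1} backwards: starting from the uniform measure on $\mathcal{P}$, marginalize out diagonals $D_1,\dots,D_{M-1}$. On the tree side, $\hvm$ is obtained by reading Algorithm~\ref{algo2} backwards: starting from the uniform measure on the order polytope of~$\Sc$, marginalize out the chain ancestors $v_1,\dots,v_{m-1}$ together with their siblings at each level, and the $j$ children of $v_m$. The decisive structural link is the correspondence highlighted in Figure~\ref{fig:tabtreeurn} and used in the construction of $\Tc$ and $\Sc$: the $k$-th chain vertex $v_k$ of $\Sc$ and the $k$-th first-row cell of $\Yc$ share the same hook length, and more generally the off-chain vertices hanging below $v_k$ match, as a sub-poset, the cells of $\Yc$ lying above the corresponding first-row cell (in the piecewise sense dictated by the shape $\lambda_1^{i_1}\cdots\lambda_n^{i_n}$). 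This allows pairing the two integration sequences so that each paired step contributes the same $x$-dependent factor, up to a multiplicative constant independent of $x$. Telescoping over the $m$ paired steps yields $\hvm(x) = c\cdot g_M(x)$, and the previous paragraph then identifies~$c$.

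The main obstacle will be the careful verification of the step-by-step matching: one must unfold the four interlocking cases \eqref{case1}--\eqref{case4} of Algorithm~\ref{algo1}, compare them case by case with the sibling-integration formula feeding Algorithm~\ref{algo2}, and track how the $x$-dependence propagates through each layer, making sure that the polynomial factors match up to an $x$-independent scalar at every stage. This is essentially a bookkeeping exercise, but a delicate one, and it is precisely the hook-length identity between the first row of $\Yc$ and the chain of $\Sc$ that makes the bookkeeping balance.
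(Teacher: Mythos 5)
Your reduction of the constant $c$ to the proportionality statement is correct and matches the paper: once $\hvm = c\,g_M$ is known, integrating over $[0,1]$ and using $\fill(\Yc)=N!\int_0^1 g_M$ and $\ext(\Sc)=|\Sc|!\int_0^1 \hvm$ gives the claimed value of $c$. The problem is that the substantive half --- the proportionality itself --- is not proved, and the strategy you sketch for it rests on a structural correspondence that does not exist. The cells of $\Yc$ lying above a given first-row cell form a \emph{chain} (a column), whereas the off-branch vertices attached at the corresponding level of $\Sc$ are an \emph{antichain} of leaves; the two posets are not isomorphic, only their hook lengths (sizes of principal order ideals) agree. Consequently the intermediate polynomials $g_k$ (in $|D_k|$ variables, built from the interlocking constraints \eqref{case1}--\eqref{case4}) and $f_k$ (in $\sigma_k+1$ variables, built from free integrations of siblings over $[0,1]$ plus one constrained integration of the chain vertex) are genuinely different multivariate objects, in different numbers of variables, and there is no sense in which they ``match up to an $x$-independent scalar at every stage.'' The telescoping you invoke cannot get off the ground; only the final univariate marginals coincide.

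The paper's actual mechanism is an indirect moment argument that you would need to supply. One appends a filament of $L$ extra cells to the corner of $\Yc$ (giving $\Yc_L$) and a filament of $L$ extra vertices below $v_m$ in $\Sc$ (giving $\Sc_L$). The density-method volume identities applied to these extended objects give $\fill(\Yc_L)=(N+L)!\int_0^1 g_M(y)(1-y)^L/L!\,dy$ and $\ext(\Sc_L)=(|\Sc|+L)!\int_0^1 \hvm(y)(1-y)^L/L!\,dy$, while the hook length formulas (for tableaux and for trees) show that $\fill(\Yc)/N! = G_L\,\fill(\Yc_L)/(N+L)!$ and $\ext(\Sc)/|\Sc|! = G_L\,\ext(\Sc_L)/(|\Sc|+L)!$ with the \emph{same} correction factor $G_L$ --- and this is exactly where the hook-length agreement between the first row of $\Yc$ and the leftmost branch of $\Sc$ enters, since $G_L$ depends only on those hook lengths. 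Hence $\int_0^1 \hvm(y)(1-y)^L\,dy = c\int_0^1 g_M(y)(1-y)^L\,dy$ for every $L\geq 1$, and since $(1-y)^L$, $L\geq 0$, spans the polynomials, the two polynomials $\hvm$ and $c\,g_M$ must be equal. Without this (or some equivalent global argument), your proof has a gap precisely at the lemma's main claim.
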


\begin{proof}
	The main idea of the proof consists in adding a filament to the tree and to the tableau,
 and inspecting the consequences 
 via the density method. 
 
\textit{Part 1 (adding a filament to the tableau).} Let $\Yc_L$ be the tableau obtained 
	by adding to~$\Yc$ $L$~cells horizontally to the right of its south-east corner $v$ (and denote these new cells by $e_1,\dots,e_L$).
	We can generate a random element of the order polytope of $\Yc_L$
	as follows: remark that $\Yc$ is a subtableau of $\Yc_L$ and that the first $M$ diagonals $D_1,\ldots, D_M$
	of $\Yc_L$ are the same as the first $M$ diagonals of $\Yc$ 
	(recall that the diagonals are 	lines with positive slope $+1$, starting from each cell of the first column and row).
	In particular, $D_M$ is the south-east corner cell~$v$. 
	Then, we can extend Algorithm~\ref{algo1} in the following way:
	\begin{coloralgorithm}[a random uniform increasing labelling $X$ of the tableau with $L$ added cells]
	\label{algo3} 
	\begin{itemize}
	\item[Step 1.]
	Generate $X_{M,L}$ the entry of the cell~$v$ according to the density 
	
 \vspace{-2.7mm} 
\begin{equation}
	\frac{g_{M,L}(x)}{\int_0^1 g_{M,L}(y) \, dy}
	\text{\qquad where \qquad}
	g_{M,L}(x):= \frac{g_M(x)(1-x)^L}{L!}.
	\end{equation}
	\item[Step 2.]
	Generate the entries of the diagonals $D_{M-1},\ldots, D_1$ 
	as in Algorithm~\ref{algo1}.
	\item[Step 3.]
	Generate the entry $X_1$ of $e_1$ with density 
	\begin{equation}
	L\frac{(1-x)^{L-1}}
	{(1-X_{M,L})^L} {\bf 1}_{\{x> X_{M,L} \}}.
	\end{equation}
	\item[Step 4.]
	For $i$ from $1$ to $L-1$, 
 generate the entry $X_{i+1}$ of $e_{i+1}$ with density 
	\begin{equation}
	(L-i)\frac{(1-x)^{L-i-1}}	{(1-X_i)^{L-i}} {\bf 1}_{\{x> X_i \}}.
	\end{equation}
	\end{itemize}
	\vspace{-1\baselineskip}
	\end{coloralgorithm}

\clearpage

	Using the same arguments as for Algorithm~\ref{algo1}, we can show that 
	Algorithm~\ref{algo3} yields a uniform random variable on the
	order polytope of $\Yc_L$ and that 
	the number of increasing labellings of $\Yc_L$ is
	\begin{equation*}
	\fill(\Yc_L) = (N+L)!\int_0^1 g_{M,L}(y) \, dy= (N+L)!\int_0^1 \frac{g_{M}(y)(1-y)^L}{L!} \, dy.
	\end{equation*}

	On the other hand, using the hook length formula, we see that the hook 
	lengths of $\Yc_L$ are the same as those of $\Yc$, except for the first row. A straightforward computation shows that
	\begin{equation*}
	\frac{\fill(\Yc)}{N!}=\frac{\fill(\Yc_L)}{(N+L)!}\times G_L,
	\end{equation*}
	where, as $\Yc$ has shape $\lambda_1^{i_1} \cdots \lambda_n^{i_n}$, the constant $G_L$ is given by
	\begin{equation}\label{G_L}
	G_L=L!\prod_{k=1}^n \frac{ \ffac{(i_1+\dots+i_k+L+\lambda_k-1)}{i_k}}
	{ \ffac{(i_1+\dots+i_k+\lambda_k-1)}{i_k}},
	\end{equation}
	where we reuse the falling factorial notation
	$\ffac{a}{b}=a(a-1)\cdots(a-b+1)$.
	Therefore, this leads to
\begin{equation} \label{fillFc}
	\int_0^1 g_{M}(y)(1-y)^L \, dy= \frac{L!}{G_L} \frac{\fill(\Yc)}{N!}.
\end{equation}	

\textit{Part 2 (adding a filament to the tree).} Suppose that we extend the tree $\Sc$ by adding a filament of length $L$.
Let $\Sc_L$ be the tree obtained from $\Sc$ by attaching to~$v_m$ a subtree consisting of a line with $L$ vertices. Put 
\begin{equation}f_L(x):=\frac{(1-x)^{L}\hvm(x)}{L!}.\end{equation}
With the same arguments as for the function $\hvm$ defined in \eqref{h}, we
see that $f_L/\int_0^1f_L(x) \, dx$ is the density of 
$Y_L(v_m)$ where $Y_L$ is a uniform random variable on the order polytope of~$\Sc_L$. 
Following the same reasoning, we can show that the number of linear extensions of~$\Sc_L$ is
\begin{equation}\ext(\Sc_L)=(|\Sc|+L)!\int_0^1 f_L(y) \, dy.\end{equation}
On the other hand, recall that a version of the hook length formula holds for trees (see e.g.~\cite{SaganYeh89,Han10,KubaPanholzer16a}):
the number of linear extensions of a tree of size~$N$ is given by
\begin{equation}\frac{N!}{\prod_{v\in S}\operatorname{hook}(v)},\end{equation} 
where here $\operatorname{hook}(v)$ is the number of descendants of~$v$ (including $v$ itself). 

\noindent Applying this formula to the tree $\Sc$ yields
\begin{equation}\frac{\ext(\Sc)}{|\Sc|!}=\frac{\ext(\Sc_L)}{(|\Sc|+L)!}\times G_L,\end{equation}
with the same $G_L$ as in~\eqref{G_L}.
Indeed, the most crucial point is that the hook lengths of the Young tableau on the first row
are \textit{the same} as the hook lengths of the tree along the leftmost branch. 
This key construction allows us to connect these two structures. Hence, one has
\begin{equation}\label{eqSc}
\int_0^1 \hvm(y)(1-y)^L \, dy= \frac{L!}{G_L} \frac{\ext(\Sc)}{|\Sc|!}.
\end{equation}

\textit{Part 3 (conclusion): linking tableaux and trees.}
Comparing~\eqref{fillFc} with~\eqref{eqSc}, we see that for every integer $L\geq 1$,
\begin{equation*}\int_0^1 \hvm(y)(1-y)^L \, dy=c\int_0^1 g_{M}(y)(1-y)^L \, dy,\end{equation*}
where $c$ is the constant given by
\begin{equation*}c=\frac{|\Yc|!}{|\Sc|!}\frac{\ext(\Sc)}{\fill(\Yc)}.\end{equation*}
Since $\hvm(x)$ and $g_M(x)$ are polynomials, this implies that $\hvm=c \, g_M$.
\end{proof}

Before establishing the final link between Young tableaux and urns,
we start by collecting what we got via the density method:
this gives the proof of Proposition~\ref{theo:tableautree}, which we now restate.

\begingroup
\newtheorem*{proposition*}{Proposition \ref{theo:tableautree}}
\begin{proposition*}[Link between the corner of a Young tableau and linear extensions of trees]
\propclaim
\end{proposition*}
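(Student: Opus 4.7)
The plan is to reduce the statement to the density method of Sections~\ref{densityYoung} and~\ref{densitytrees}, chaining Lemmas~\ref{lemma:fact} and~\ref{lemma:filament}. First I would observe that since $v_0$ is the root of~$\Tc$, every linear extension of~$\Tc$ assigns the label~$1$ to~$v_0$; hence, writing $\Tc' := \Tc \setminus \{v_0\}$, a uniform linear extension $E_\Tc$ of~$\Tc$ is obtained by setting $E_\Tc(v_0)=1$ and drawing a uniform linear extension of~$\Tc'$ with labels shifted by one. In particular $E_\Tc(\V) \= 1 + E_{\Tc'}(\V)$, so it is enough to prove $E_{\Tc'}(\V) \= E_\Yc(v)$. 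As a poset, $\Tc'$ decomposes as the disjoint union $\Sc \sqcup \Sc'$, where $\Sc$ is the subtree of~$\Tc$ rooted at~$v_1$ and $\Sc'$ is the set of $N-m-\lambda_1+1$ former children of~$v_0$, viewed as mutually incomparable elements. A size check yields $|\Tc'| = |\Sc| + |\Sc'| = (m+\lambda_1-1) + (N-m-\lambda_1+1) = N = |\Yc|$, so the hypothesis of Lemma~\ref{lemma:fact} is satisfied.

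By Lemma~\ref{lemma:fact}, it is enough to show that, for uniform random elements $Y$ and $Y'$ of the order polytopes of~$\Yc$ and~$\Tc'$ respectively, the marginal densities of $Y_v$ and $Y'_{\V}$ coincide. Because $\Tc'$ is a disjoint union of posets, its order polytope factors as a Cartesian product and the marginal density of $Y'_{\V}$ reduces to the marginal density of $Y_{\V}$ in the order polytope of~$\Sc$ alone (the factor coming from~$\Sc'$ integrates out to~$1$). By the density method for trees, this density is proportional to the polynomial $\hvm(x)$ introduced in Section~\ref{densitytrees}; by the density method for Young tableaux, the density of $Y_v$ is proportional to the polynomial $g_M(x)$ of Section~\ref{densityYoung}. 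So it only remains to verify $\hvm = c \cdot g_M$ for some positive constant~$c$.

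This proportionality is the content of Lemma~\ref{lemma:filament}, via the filament argument that attaches $L$ cells to the right of~$v$ in~$\Yc$ and, simultaneously, a line of $L$ vertices below~$\V$ in~$\Sc$, yielding $\int_0^1 \hvm(y)(1-y)^L\,dy = c \int_0^1 g_M(y)(1-y)^L\,dy$ for every integer $L\geq 1$, whence $\hvm = c\cdot g_M$ by polynomial identification. The crucial input is the identity between the hook lengths of the cells $(1,1),\ldots,(1,m)$ in the first row of~$\Yc$ and the hook lengths of $v_1,\ldots,v_m$ along the leftmost branch of~$\Sc$: a direct unrolling of Definition~\ref{ShapeTree} shows that $v_j$ in~$\Sc$ has hook length $\lambda_k + m - j$, where $k$ is the unique index with $i_1+\dots+i_{k-1} < j \leq i_1+\dots+i_k$, exactly matching the hook length of the cell $(1,j)$ in~$\Yc$. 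As a consequence, the same multiplicative constant $G_L$ (given by~\eqref{G_L}) appears on both sides when applying the hook length formulas for tableaux and for trees.

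The main obstacle is not analytic but combinatorial bookkeeping: one has to verify that the tree shape prescribed in the proposition is precisely the one which produces this hook-length identity, and that Lemma~\ref{lemma:filament}, whose proof was phrased for the specific triangular tableau of the main theorem, transfers verbatim to an arbitrary shape $\lambda_1^{i_1}\cdots\lambda_n^{i_n}$ (the filament construction and the two hook length formulas work without modification). Once this verification is in place, chaining $E_\Tc(\V) \= 1 + E_{\Tc'}(\V) \= 1 + E_\Yc(v)$ completes the proof.
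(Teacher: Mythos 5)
Your proof is correct and follows essentially the same route as the paper's: removing the root to reduce to the forest $\Sc\sqcup\Sc'$, factoring the order polytope so that the marginal density of $\V$ is governed by $\hvm$, and chaining Lemma~\ref{lemma:fact} with the filament identity $\hvm=c\,g_M$ of Lemma~\ref{lemma:filament}. The only remark worth making is that Lemma~\ref{lemma:filament} and the density computations of Sections~\ref{densityYoung}--\ref{densitytrees} are already stated for a general shape $\lambda_1^{i_1}\cdots\lambda_n^{i_n}$, so the transfer you flag as the "main obstacle" requires no additional work.
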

\begin{proof}\label{proof:theo:tableautree}
The reader is invited to have a new look on Figure~\ref{fig:tabtreeurn} (page~\pageref{fig:tabtreeurn}), 
 which illustrates for this proof 
the idea of the trees $\Tc$, $\Sc$, and the set of leaves~$\Sc'$.
We first introduce a forest $\Tc^*:=\Sc \cup \Sc'$ obtained by adding $N-m-\lambda_1+1$ vertices without any order relation to the tree~${\Sc}$. 
$\Tc^*$ has an order relation inherited from the order relation $\leq$ on $\Sc$: 
two nodes~$x,y$ of~$\Tc^*$ are comparable if and only if they belong to~$\Sc$ and in that case, the order relation on~$\Tc^*$ is the same as the one on~$\Sc$.

Let $\mathcal{P'}$ be the order polytope of~$\Sc$. Then it is clear that the order polytope of~$\Tc^*$ is
\begin{equation*} \mathcal{P}=\mathcal{P'}\times [0,1]^{N-m-\lambda_1+1}. \end{equation*}
In particular, if $Y'$ is a uniform random variable on $\mathcal{P'}$ and if $Y$ is a uniform random variable on~$\mathcal{P}$, then $Y'_v$ and $Y_v$ have the same density. 
This density is proportional to the function $\hvm$ computed in Section~\ref{densitytrees}.
Next, recall the notation $g_M$ and $D_M$ from Section~\ref{densityYoung}. 
Lemma~\ref{lemma:filament} gives that $\hvm=c \, g_M$.
Thus, the density of $Y_{v_m}$ is the same as the density of $D_M$. Moreover,
$\Tc^*$ and $\Yc$ have the same cardinality. Therefore, Lemma~\ref{lemma:fact}
entails that if $E_{\Tc^*}$ is a random uniform linear extension
of~$\Tc^*$ and if $\Ynv$ is the entry of the south-east corner in a random increasing labelling
of~$\Yc$, 
then $E_{\Tc^*}(v_m)$ and $\Ynv$ have the same distribution. 

Now, it is easy to deduce from $E_{\Tc^*}$ a random uniform linear extension~$E_{\Tc}$ of~$\Tc$:
set $E_{\Tc}(u)=1$ if $u$ is the root of~$\Tc$, and 
set $E_{\Tc}(u)=1+E_{\Tc^*}(u)$ for the other nodes (since any such node $u$ can be identified as a node of~$\Tc$).
Applying this to the vertex $v_m$ finishes the proof of Proposition~\ref{theo:tableautree}.
\end{proof}
\endgroup

\subsection{The link between trees and urns}\label{linkyu} 

In order to end the proof of Theorem~\ref{TheoremCorner}, we need two more propositions.
\vspace{-1mm}

\begin{proposition}[Link between trees and urns]\label{prop:urntree} 
 Consider a tree $\Sc$ with shape $(i_1,j_1;\ldots; i_n, j_n)$.
Let $v$ be the parent of the leftmost leaf if $j_n\geq 1$, or the 
leftmost leaf if $j_n=0$. Let $E_{\Sc}$ be a random uniform 
linear extension of~$\Sc$.

Let $X=|\Sc|-E_{\Sc}(v)$. Then, $X$ has the same law as the number of black balls in the following urn process:
\begin{itemize}
\item
Initialize the urn with $b_0:=j_n+1$ black balls and $w_0:=i_n$ white balls.
\item 
For $k$ from $n-1$ to $1$, perform the following steps:
\begin{enumerate}
\item
Perform $j_{k}-1$ times the classical P\'olya urn with replacement matrix 
$	\begin{pmatrix}
			1 & 0 \\
			0 & 1
		\end{pmatrix}$. 
\item 
Make one transition with the replacement matrix 
$	\begin{pmatrix}
			1 & i_k \\
			0 & 1+ i_k
		\end{pmatrix}$. 
\end{enumerate}
\end{itemize}
\end{proposition}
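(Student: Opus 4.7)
The plan is to prove the proposition by induction on $n$, the number of blocks in the shape of $\Sc$, \emph{peeling off the top block} $(i_1,j_1)$ at each step; this direction is forced by the urn, whose outermost (last-executed) iteration $k=1$ must match the block added last in the inductive construction of $\Sc$. Throughout the induction I would maintain the invariant that the urn's total ball count is a fixed deterministic shift of the current tree size, and its black-ball count is the same shift of $X$; the base case $n=1$ (where both sides are deterministic) fixes this shift.

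For the inductive step, let $\Sc^\ast$ denote the subtree obtained from $\Sc$ by deleting $v_1,\dots,v_{i_1}$ together with the $j_1$ bump leaves attached to $v_{i_1}$; it has shape $(i_2,j_2;\dots;i_n,j_n)$, is rooted at $v_{i_1+1}$, and still contains $v=v_m$. By the induction hypothesis, $X^\ast:=|\Sc^\ast|-E_{\Sc^\ast}(v)$ has the law of the urn initialised at $(j_n+1,i_n)$ and run through iterations $k=n-1,\dots,2$. The key structural input is a \emph{decomposition lemma}: a uniform random linear extension of $\Sc$ can be built by (i) setting $E_\Sc(v_k)=k$ for $k\le i_1$ (forced because each such $v_k$ has a single candidate at its labelling step); (ii) drawing an independent uniform linear extension of $\Sc^\ast$; and (iii) drawing an independent uniform shuffle of the $j_1$ distinguishable bump leaves with the $|\Sc^\ast|$ ordered $\Sc^\ast$-nodes. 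Uniformity is checked by matching the count $(|\Sc|-i_1)!\,\ext(\Sc^\ast)/|\Sc^\ast|!$ produced by this recipe with the value of $\ext(\Sc)$ obtained from the tree hook-length formula. Writing $Q$ for the number of bump leaves landing after $v_m$ in the shuffle then yields the additive decomposition $X=X^\ast+Q$.

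It remains to identify $Q$ with the black increment during iteration $k=1$ of the urn. A direct negative-hypergeometric count yields
\begin{equation*}
\Prob(Q=q\mid X^\ast)=\binom{X^\ast+q}{q}\binom{|\Sc^\ast|-X^\ast-1+j_1-q}{j_1-q}\Big/\binom{|\Sc^\ast|+j_1}{j_1}.
\end{equation*}
On the urn side, iteration $1$ consists of $j_1-1$ classical P\'olya draws followed by one $M_2$-draw; crucially, the $M_2$-step uses the \emph{same} conditional draw probability as a classical step (only the replacement differs, the $M_2$-step depositing $i_1$ extra white balls irrespective of the drawn colour). Hence, writing $(B^\ast,W^\ast)$ for the urn composition entering iteration $1$, the number $Q'$ of black balls drawn obeys the classical P\'olya--Eggenberger law
\begin{equation*}
\Prob(Q'=q\mid B^\ast,W^\ast)=\binom{j_1}{q}\frac{B^{\ast(q)}W^{\ast(j_1-q)}}{(B^\ast+W^\ast)^{(j_1)}},\quad y^{(k)}:=y(y+1)\cdots(y+k-1).
\end{equation*}
The induction invariant expresses $B^\ast$ and $B^\ast+W^\ast$ as the appropriate shifts of $X^\ast$ and $|\Sc^\ast|$, and an elementary manipulation of rising factorials against binomial coefficients verifies that the two displayed laws coincide. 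Closure of the induction follows from the bookkeeping: iteration $1$ injects exactly $j_1+i_1=|\Sc|-|\Sc^\ast|$ balls, of which $Q'$ are black, so the post-iteration composition preserves both the total and the black-count invariants.

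The main obstacle is the algebraic matching of the negative-hypergeometric law of $Q$ with the classical P\'olya--Eggenberger law of $Q'$; it boils down to a rising-factorial identity, but care is needed to separate the dual role of the $M_2$-step, which shares its draw probability with a classical step but not its replacement rule. A secondary subtlety is tracking the fixed shift in the invariant consistently from the base case onward, so that the two sides line up after all $n-1$ iterations; this is essentially the content of the base case $n=1$, which pins down the shift once and for all.
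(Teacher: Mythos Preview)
Your approach is correct but organized differently from the paper's. The paper builds an increasing family of subsets $S_0\subset S_1\subset\cdots\subset\Sc$ by adjoining one leaf at a time and argues that at each step the new leaf's rank within the current subset is uniform, so that the event ``new leaf lands after $v$'' has exactly the P\'olya draw probability $B/(B+W)$; this is a step-by-step coupling with no closed-form identity needed. You instead peel off a full block $(i_1,j_1)$, use a clean decomposition lemma (uniform extension of $\Sc$ $=$ forced path labels $\times$ uniform extension of $\Sc^\ast$ $\times$ uniform shuffle with the $j_1$ bump leaves), and then verify a distributional identity between the negative-hypergeometric law of $Q$ and the P\'olya--Eggenberger law of $j_1$ consecutive draws. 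Your route is arguably tidier: the decomposition lemma is an exact statement about linear extensions of a disjoint union of posets, whereas the paper's assertion that the induced ranking $E_{\RR}$ is a uniform linear extension of $\RR$ is phrased more generally than it literally holds and relies implicitly on the special structure of the $S_k$'s. One small point: when you compute the shift in the base case $n=1$ you will find it equals $+1$ (black count $=X+1$, total $=|\Sc|+1$), which is exactly what the paper's own proof produces since $v$ is counted in both the $\ge$ and the $\le$ tallies; this harmless off-by-one versus the literal statement washes out in the asymptotic applications.
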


\begin{remark} 
Note that the urn scheme described in the proposition is precisely the model of periodic P\'olya urns 
covered by Theorem~\ref{ProdGenGammaGeneral}.
For Young--P\'olya urns, one has
$i_{k}=\ell$ and $j_{k}= p$ for $k<n$, and $i_n=\ell$ and $j_n=p-1$, compare Figure~\ref{fig:tabtreeurn}. 
\end{remark}

\begin{proof}[Proof (Proposition~\ref{prop:urntree})]
First consider the transition probabilities in the classical P\'olya urn.
At step $i>0$ the composition $(B_i,W_i)$ is obtained from $(B_{i-1},W_{i-1})$ 
by adding a black ball with probability
$\frac{B_{i-1}}{B_{i-1}+ W_{i-1}}$ and a white ball with probability
$\frac{W_{i-1}}{B_{i-1}+ W_{i-1}}$.
We will now show that the same transition probabilities are imposed by the linear extension of the tree.

\begingroup
\def\RR{\mathcal R}
We start with a definition. If $\RR\subset \Sc$ 
we define $E_{\RR}:\RR\to \{1,\ldots,|\RR|\}$ as the only bijection preserving the order relation induced by~$E_\Sc$. 
That is, $E_{\RR}(u)=k$ if and only if $E_{\Sc}(u)$ is the $k$-th smallest value in the set~$\{E_{\Sc}(r) : r\in \RR\}$. 
It is easy to check that $E_{\RR}$ is a uniform linear extension of~$\RR$ seen as a poset equipped with the order relation inherited 
from~$\Sc$.
\endgroup

Let us prove our claim. 
On the one hand, for every vertex $w$ which is one of the $j_n$ children of~$v$, we have $E_{\Sc}(w) > E_{\Sc}(v)$. 
On the other hand, for every vertex $u$ which is one of the $(i_n-1)$-st most recent ancestors 
of~$v$, we have $E_{\Sc}(u) < E_{\Sc}(v)$. 
Let $S_0$ be the set consisting of~$v$, all 
its children and its $(i_n-1)$-st most recent ancestors; see Figure~\ref{fig:tree2urn}.

We will perform two nested inductions. The outer one is decreasing from $k=n-1$ to~$1$, and each inner one increasing from $1$ to $j_k$.
We start with $k=n-1$.
First, let $u_n$ be the $i_n$-th most recent ancestor of~$v$.
The node $u_n$ has $j_{n-1}$ children which are not ancestors of~$v$.
Call these $u_{n,1},\ldots, u_{n,j_{n-1}}$. 
Let $S_1:=S_0\cup\{u_{n,1}\}$,
then $E_{S_1}(u_{n,1})$ is uniformly distributed on $\{1,\ldots, |S_1|\}$. 
As a consequence, $E_{S_1}(u_{n,1}) > E_{S_1}(v)$ with probability 
$(j_n+1)/(j_n+1+i_n)$. This probability can be expressed as
$\frac{b_0}{b_0+w_0}$,
where $b_0$ is the number of vertices $u$ in $S_0$ such that 
$E_{\Sc}(u) \geq E_{\Sc}(v)$ and $w_0$
is the number of vertices $u$ in $S_0$ such that 
$E_{\Sc}(u) \leq E_{\Sc}(v)$.
Conditionally on the initial configuration $S_0$, this defines two random variables: let $B_1$ be the number of vertices 
$u$ in $S_1$ such that 
$E_{\Sc}(u) \geq E_{\Sc}(v)$ and $W_1$
be the number of vertices $u$ in $S_1$ such that 
$E_{\Sc}(u) \leq E_{\Sc}(v)$. 

Next, let $S_2:=S_1\cup\{u_{n,2}\}$,
then $E_{S_2}(u_{n,2})$ is uniformly distributed on $\{1,\ldots,|S_2|\}$. 
Then, conditionally on $B_1$ and $W_1$, one has
$E_{S_2}(u_{n,2}) \geq E_{S_2}(v)$, with probability 
$\frac{B_1}{B_1+W_1}$.
This process is then continued by induction until $S_{j_{n-1}}$. 
After that $i_{n-1}$ white balls are added. 

Continuing this process via a decreasing induction in $k$ from $n-2$ to $1$ finishes the proof.
\end{proof}
\begingroup
\setlength{\belowcaptionskip}{-15pt}
\begin{figure}[!ht]
		\begin{center}	
			\includegraphics[width=.56\textwidth]{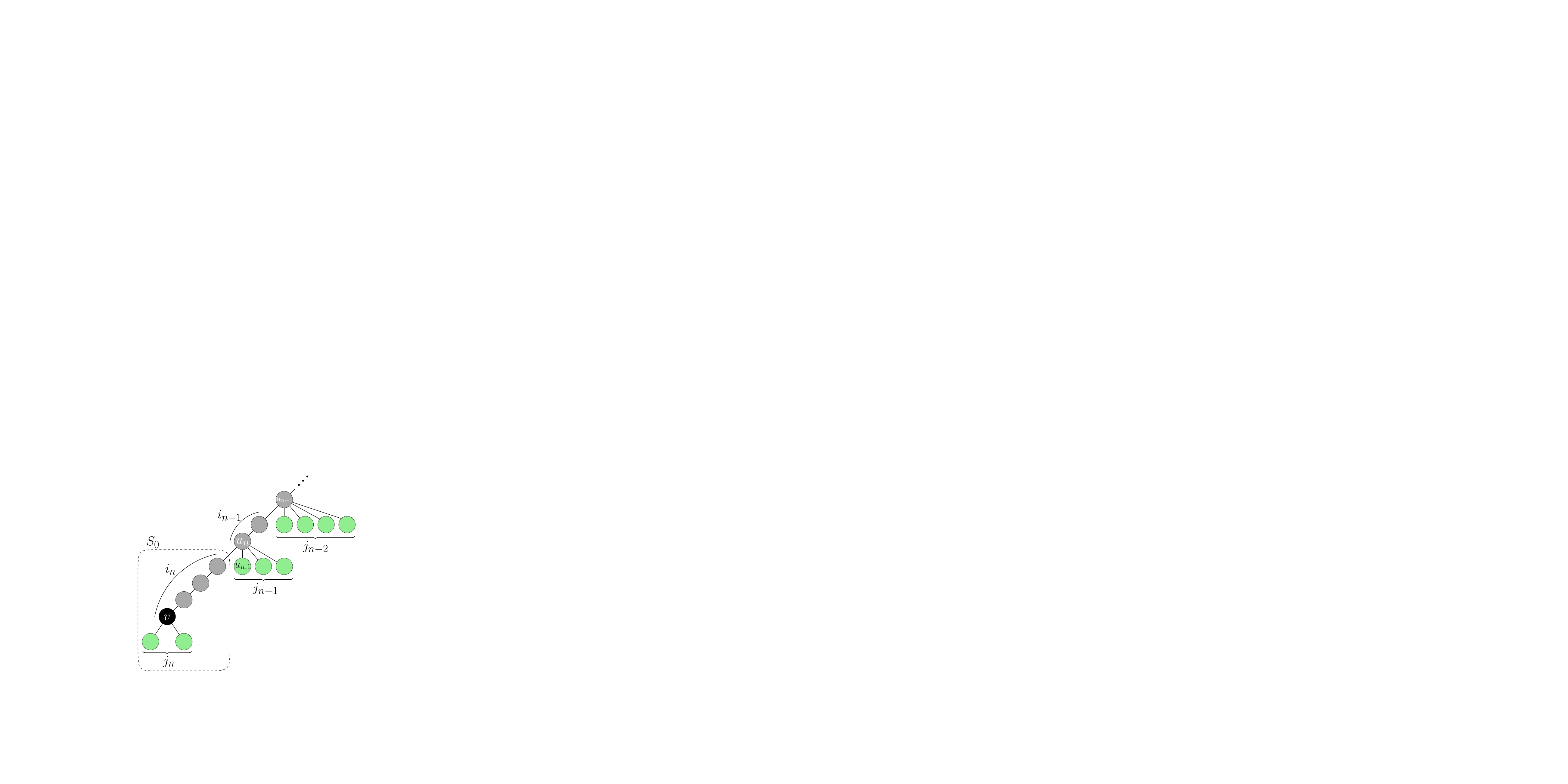}\internallinenumbers
			\caption{Proposition~\ref{prop:urntree} relates the labels in the tree $\Sc$ with a P\'olya urn process.	
			For periodic shapes, it gives a periodic P\'olya urn.
			The initial conditions are given by $S_0$. 
			The tree is traversed bottom to top, along vertices not in the leftmost branch, starting at $u_{n,1}$. 
			Each of these nodes corresponds to a classical P\'olya urn step, whereas each vertex in the leftmost branch corresponds to an additionally added white ball.}
 \label{fig:tree2urn} 		
	\end{center}
\end{figure}
\endgroup

Our final proposition requires first the following basic lemma.
\begingroup
\def\Ss{s}
\begin{lemma}[Order statistics comparisons]\label{orderstatistics}
Let $(Z_i, 1\leq i\leq N-\Ss-1)$ be independent, uniform random variables on $[0,1]$
and let $Z$ be a random variable on $[0,1]$, independent of each $Z_i$,
and distributed like $\operatorname{Beta}(a,\Ss+1-a)$.
Let $I$ be the number of indices $i\geq 1$ such that $Z_i<Z$.
Then, one has
\begin{equation}\label{orderstatistics1}\E(I)=\frac{(N-\Ss-1)\, a}{\Ss+1}\end{equation}
and 
\begin{equation}\label{orderstatistics2}\E(I^2) = \frac{a (N-\Ss-1) ((a+1) N-(\Ss+2) a)}{(\Ss+1)(\Ss+2)}.\end{equation}
\end{lemma}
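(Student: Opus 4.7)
The plan is to handle this by simple conditioning on $Z$. Given $Z=z$, the events $\{Z_i < z\}$ are independent Bernoulli$(z)$ trials because the $Z_i$ are independent uniforms on $[0,1]$ independent of $Z$; hence the conditional law of $I$ is Binomial$(N-\Ss-1,z)$. This gives the conditional moments
\begin{equation*}
\E(I\mid Z)=(N-\Ss-1)\,Z,\qquad \E(I^2\mid Z)=(N-\Ss-1)\,Z+(N-\Ss-1)(N-\Ss-2)\,Z^2,
\end{equation*}
using $\mathrm{Var}(\mathrm{Bin}(n,z))=nz(1-z)$ together with $\E(X^2)=\mathrm{Var}(X)+\E(X)^2$.

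Next I would integrate with respect to the law of $Z$. Since $Z\sim \operatorname{Beta}(a,\Ss+1-a)$, the classical Beta moments give
\begin{equation*}
\E(Z)=\frac{a}{\Ss+1},\qquad \E(Z^2)=\frac{a(a+1)}{(\Ss+1)(\Ss+2)}.
\end{equation*}
Plugging the first of these into $\E(I)=\E(\E(I\mid Z))$ yields~\eqref{orderstatistics1} at once.

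For~\eqref{orderstatistics2}, substituting both Beta moments into $\E(I^2)=\E(\E(I^2\mid Z))$ gives
\begin{equation*}
\E(I^2)=\frac{a(N-\Ss-1)}{\Ss+1}+\frac{a(a+1)(N-\Ss-1)(N-\Ss-2)}{(\Ss+1)(\Ss+2)}.
\end{equation*}
Putting the right-hand side over the common denominator $(\Ss+1)(\Ss+2)$ and factoring out $a(N-\Ss-1)$ reduces the bracket to $(\Ss+2)+(a+1)(N-\Ss-2)=(a+1)N-(\Ss+2)a$, which is exactly~\eqref{orderstatistics2}. No real obstacle is expected: the only step requiring care is the final algebraic simplification, where one must not confuse the two shape parameters of the Beta distribution (the sum $\alpha+\beta=\Ss+1$ is what makes the moments simplify so cleanly).
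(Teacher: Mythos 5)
Your proposal is correct and is essentially the paper's own argument in a different dress: the paper expands $I=\sum_i \mathbf{1}_{\{Z_i<Z\}}$ and uses the pairwise probabilities $\PR(Z_i<Z)=\E(Z)=\frac{a}{\Ss+1}$ and $\PR(Z_i<Z,Z_j<Z)=\E(Z^2)=\frac{a(a+1)}{(\Ss+1)(\Ss+2)}$, which are exactly the Beta moments you obtain by conditioning on $Z$ and invoking the binomial second moment. Your final algebraic reduction of the bracket to $(a+1)N-(\Ss+2)a$ checks out, so nothing is missing.
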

\pagebreak
\begin{proof}
The density of the beta distribution $Z$ was already encountered 
 in Equation~\eqref{eq:orderstat2}; $Z$ is thus the $a$-th order statistic of the uniform distribution. 
It is easily seen that for all $1\leq i<j\leq N-\Ss-1$,
\begin{equation}
	\PR(Z_i<Z)=\frac{a}{\Ss+1} \text{ \qquad and \qquad }
	\PR(Z_i<Z, Z_j<Z)=\frac{a(a+1)}{(\Ss+1)(\Ss+2)}.
\end{equation}

Moreover, writing the random variable $I$ as $I=\sum_{i=1}^{N-\Ss-1}{\bf 1}_{\{ Z_i<Z\}}$, we get
\begin{equation}
\E(I)=\sum_{i=1}^{N-\Ss-1} \PR(Z_i<Z)=\frac{(N-\Ss-1)a}{\Ss+1}
\end{equation}
\begin{eqnarray*}
\hspace{2cm} \E(I^2)&=&\sum_{1\leq i\ne j\leq N-\Ss-1}\PR(Z_i<Z, Z_j<Z)+\sum_{i=1}^{N-\Ss-1} \PR(Z_i<Z)\\ 
&=& (N-\Ss-1)(N-\Ss-2) \frac{a(a+1)}{(\Ss+1)\, (\Ss+2)}
+\frac{(N-\Ss-1)a}{\Ss+1}\text.\hspace{4mm}\qedhere 
\end{eqnarray*} 
\end{proof}
\endgroup

In order to finish the proof of Theorem~\ref{TheoremCorner},
we still have to relate $\Ss-E_{\cal S}(\V) $ to the quantity that we are interested in, namely
$N-E_{\cal T}(\V)$.

\begin{proposition}[Same asymptotic densities] \label{prop:TeqS}
The random variables $E_{\cal S}(\V)$ and $E_{\cal T}(\V)$ satisfy asymptotically the following link: for any $s,t\in \R^+$, one has
\begin{equation}\label{TeqS}
\lim_{n \to \infty} \PR\left(s<\frac{\Ss-E_{\cal S}(\V)}{n^{\delta}} < t \right)=
\lim_{n \to \infty} \PR\left(s<\frac{2(p+\ell)}{p \ell} \frac{N-E_{\cal T}(\V)}{n^{1+\delta}}<t \right).
\end{equation}
\end{proposition}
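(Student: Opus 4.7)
My plan is to decompose a uniform random linear extension of $\Tc$ through its restriction to the subtree $\Sc$ and then to exploit a binomial-type count of the $\Sc'$-leaves sitting above $v_m$ in order to connect $E_\Tc(v_m)$ with $E_\Sc(v_m)$. First, I lift to order polytopes in the spirit of Sections~\ref{densityYoung} and~\ref{densitytrees}. Let $Y$ be uniform on the order polytope of $\Tc$. Since $v_0$ is the minimum of this poset and the leaves of $\Sc'$ are only constrained by $v_0$, the change of variables $\tilde Y_u := (Y_u - Y_{v_0})/(1 - Y_{v_0})$ produces a vector $\tilde Y$ uniform on the order polytope of $\Sc$, an i.i.d.\ uniform $[0,1]$ family $(\tilde U_i)_{i\in\Sc'}$, and the scalar $Y_{v_0}$, jointly independent (an explicit Jacobian computation shows that the conditional laws given $Y_{v_0}$ do not depend on $Y_{v_0}$). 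Counting the $|\Tc|-E_\Tc(v_m)=N+1-E_\Tc(v_m)$ vertices of $\Tc$ whose rank exceeds that of~$v_m$, according to whether they lie in $\Sc$ or in $\Sc'$, yields the deterministic identity
\begin{equation*}
N - E_\Tc(v_m) \;=\; X + J - 1, \qquad J := \#\{ i \in \Sc' : \tilde U_i > \tilde Y_{v_m}\},
\end{equation*}
where $X = \Ss - E_\Sc(v_m)$ is precisely the urn random variable analysed in Proposition~\ref{prop:urntree}.

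The second step is to identify the conditional law of $\tilde Y_{v_m}$ and to apply Lemma~\ref{orderstatistics}. Conditionally on the linear extension $E_\Sc$, the tuple $(\tilde Y_u)_{u\in\Sc}$ is uniform on the simplex of correspondingly ordered tuples, so $\tilde Y_{v_m}$ is the $E_\Sc(v_m)$-th order statistic of $\Ss$ i.i.d.\ uniform random variables, hence follows the $\operatorname{Beta}(\Ss-X,\,X+1)$ distribution conditionally on $X$. Applying Lemma~\ref{orderstatistics} with the dictionary $a = \Ss-X$, $s = \Ss$, $N-s-1 = |\Sc'|$ and $I = |\Sc'|-J$ then gives
\begin{align*}
\E(J \mid X) &\;=\; \frac{|\Sc'|\,(X+1)}{\Ss+1}, \\
\Vb(J \mid X) &\;=\; \LandauO\!\left(\frac{|\Sc'|^2\,(X+1)}{(\Ss+1)^2} + \frac{|\Sc'|\,(X+1)}{\Ss+1}\right),
\end{align*}
the variance estimate following from the decomposition $\Vb(J\mid X)=|\Sc'|\,\E[\tilde Y_{v_m}(1-\tilde Y_{v_m})\mid X]+|\Sc'|^2\,\Vb(\tilde Y_{v_m}\mid X)$ with explicit Beta moments.

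Third, I feed the scaling into these estimates. The construction of $\Sc$ in the proof of Theorem~\ref{TheoremCorner} gives $\Ss = n(p+\ell) - p - 1$ and $|\Sc'| = N - \Ss \sim \tfrac{p\ell}{2}\,n^2$, whence $|\Sc'|/((\Ss+1)\,n) \to \tfrac{p\ell}{2(p+\ell)}$. By Proposition~\ref{prop:urntree} combined with Theorem~\ref{theo:PGG}, the sequence $X/n^\delta$ converges in law and is in particular tight, so, on the event $\{X \leq A\,n^\delta\}$, the above estimate yields $\Vb(J\mid X) = \LandauO(n^{2+\delta})$, which is of smaller order than $(n^{1+\delta})^2 = n^{2+2\delta}$ since $\delta>0$. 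A conditional Chebyshev inequality together with a standard truncation on $X$ then produces
\begin{equation*}
\frac{J}{n^{1+\delta}} \;-\; \frac{|\Sc'|}{(\Ss+1)\,n}\cdot\frac{X+1}{n^\delta} \;\longrightarrow\; 0 \qquad \text{in probability.}
\end{equation*}

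Finally, since $(X-1)/n^{1+\delta}\to 0$ almost surely, combining the last display with the identity $N - E_\Tc(v_m)=X+J-1$ gives
\begin{equation*}
\frac{2(p+\ell)}{p\ell}\cdot\frac{N - E_\Tc(v_m)}{n^{1+\delta}} \;=\; \frac{X}{n^\delta} + o(1) \qquad \text{in probability,}
\end{equation*}
from which~\eqref{TeqS} follows at once. The most delicate point is the clean decoupling of the $\Sc'$-leaves from the $\Sc$-structure via the $(Y_{v_0},\tilde Y,\tilde U)$ change of variables: it is precisely this decoupling that justifies treating $(\tilde U_i)$ as i.i.d.\ uniform samples independent of $\tilde Y_{v_m}$ and thus enables the verbatim application of Lemma~\ref{orderstatistics}; all remaining estimates are routine variations of the scaling arguments already performed in Section~\ref{sec:Moments}.
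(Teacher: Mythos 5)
Your proposal is correct and follows essentially the same route as the paper: decoupling $\Sc'$ from $\Sc$ through the product structure of the order polytope, writing $N-E_{\Tc}(\V)$ as $X$ plus the number of $\Sc'$-vertices ranked above $\V$, identifying the conditional beta law of $\tilde Y_{\V}$ so that Lemma~\ref{orderstatistics} applies, and concluding by conditional Chebyshev concentration at scale $n^{1+\delta}$. The only differences (keeping the root and rescaling instead of deleting it, deriving the conditional variance via the binomial mixture rather than quoting the lemma's second-moment formula, and phrasing the conclusion as convergence in probability rather than conditioning on the events $F_{s,t}$) are cosmetic.
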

\begin{proof}

Let $\cal T^*=\Sc\cup\Sc'$ be the graph obtained from $\cal T$ by removing the root. 
Then $\cal T^*$ is a poset where there is no order relation between any vertex of~$\Sc'$ and any other vertex from $\cal T^*$. 
Due to this independence, the order polytope of~$\cal T^*$ is
 the Cartesian product of the order polytope of~$\Sc$ and $[0,1]^{|\Sc'|}$.
Now, let $a>0$ be an integer and 
let $F_a$ be the event that 
\begin{equation*}\Ss-E_{\cal S}(\V)=a.
\end{equation*}
In other words, $a$ is the number of vertices in ${\cal S}$ with a label greater than $E_{\cal S}(\V)$.
Let $I$ be the random variable counting the number of vertices in ${\cal S'}$ with a label greater than $\E_{\cal T}(\V)$.
Then, conditionally on the event $F_a$, the random variable $N-E_{\cal T}(\V)$ has the same law as $I+a$.
Indeed, $N-E_{\cal T}(\V)$ counts the number of vertices in ${\cal T}$ with a label greater than $E_{\cal T}(\V)$.
Note that $I$ satisfies the conditions of Lemma~\ref{orderstatistics} (with $s:=\Ss$ therein),
due to the order polytope independence mentioned above.

Recall that $\Ss=\Theta(n)$ while $N=\Theta(n^2)$
(in fact, $\Ss=(p+\ell)n-1$ and $|\mathcal T|=N=\frac{1}{2} p\ell n (n+1)$). 
Therefore, if $(a_n)_{n\geq 1}$ is a sequence of integers tending to $+\infty$ and such that $a_n=o(n)$, 
then, thanks to~\eqref{orderstatistics1}, 
we have the estimates for the conditional expectation
\begin{equation}\label{muI}
\E(I|F_{a_n})\sim \frac{a_nN}{ \Ss}\sim c \, n a_n,
\end{equation}																		with the constant $c = \frac{p \ell}{2 (p+\ell)}$										 
and, thanks to~\eqref{orderstatistics2}, for the conditional variance 
\begin{equation}\label{varI}
\operatorname{var}(I|F_{a_n})=\E(I^2|F_{a_n}) - (\E(I|F_{a_n}))^2 \sim 
c^2 n^2 a_n.
\end{equation}
Combining~\eqref{muI} and~\eqref{varI}, the Bienaym\'e--Chebyshev inequality gives that (for any $\kappa>0$):

\begin{equation}\label{titi}
\PR\left(\left\{\left|\frac{I}{cna_n}-1\right|>\kappa\right\}|F_{a_n}\right)
\leq \frac{1+\varepsilon_n}{\kappa^2a_n}\,,
\end{equation}
where $\varepsilon_n$ is a sequence converging to 0 as $n\to \infty$. Since we have
\begin{equation}
\frac{N-E_{\cal T}(\V)}{na_n}=\frac{I+a_n}{na_n}=\frac{I}{na_n}+\frac{1}{n},
\end{equation}
the inequality~\eqref{titi} can be rewritten into
\begin{equation}\label{toto}
\PR\left(\left\{\left|\frac{N-E_{\cal T}(\V)}{cna_n}-1\right|>\kappa\right\}|F_{a_n}\right)
\leq \frac{1+\varepsilon'_n}{\kappa^2a_n}\,,
\end{equation}
where $\varepsilon'_n$ is a sequence converging to 0 as $n\to \infty$.
In particular, for any $t>0$ and $0<\delta<1$, 
setting $a_n=\left \lceil{ tn^\delta}\right \rceil$ in~\eqref{toto} gives
\begin{equation}\label{dirac}
\PR\left(\left\{\left|\frac{N-E_{\cal T}(\V)}{c n^{1+\delta}}-t\right|>\kappa t\right\}|F_{a_n}\right)
\leq \frac{1+o(1)}{\kappa^2 t n^{\delta}}\,.
\end{equation}

Finally, for all reals $0<s<t$, define the event 
\begin{equation} F_{s,t}\,\,=\,\,\bigcup_{sn^\delta<a<tn^\delta}F_a\,\,=\,\,
\left\{
	s < \frac{\Ss-E_{\cal S}(\V)}{n^{\delta}} < t \, 
\right\}.
\end{equation}
According to~\eqref{dirac} (set $\kappa = \varepsilon/t$ for any $\varepsilon>0$), we have for $n\to\infty$
\begin{equation}
	\PR\left( \left\{s<\frac{N-E_{\cal T}(\V)}{cn^{1+\delta}}<t\right\} | F_{s,t}\right) \to 1. 
\end{equation}
Thus, conditioning on the complementary event ${\bar F}_{s,t}$, we have 
\begin{equation}\label{Zurich1}
\lim_{n \to \infty} \PR\left(\left\{s<\frac{N-E_{\cal T}(\V)}{cn^{1+\delta}}<t\right\}\cap {\bar F}_{s,t} \right) = 0,
\end{equation}
whereas conditioning on $F_{s,t}$ gives
\begin{equation}\label{Zurich2}
\lim_{n \to \infty} \PR\left(\left\{s<\frac{N-E_{\cal T}(\V)}{cn^{1+\delta}}<t\right\}\cap F_{s,t} \right) = \lim_{n \to \infty} \PR\left(s<\frac{\Ss-E_{\cal S}(\V)}{n^{\delta}} < t \right).
\end{equation} 
Summing~\eqref{Zurich1} and~\eqref{Zurich2} 
leads to~\eqref{TeqS}.
\end{proof}

\smallskip

In summary, in this Section we have proven that the four following quantities
have asymptotically the same distribution: 
\begin{equation}\label{fullstory}
\frac{2}{p \ell} \frac{N-E_{\Yc}(v)}{ n^{1+\delta}}
 \underset{\mathllap{
 \begin{tikzpicture}
 \draw[->] (-0.3, 0) to[bend right=20] ++(0.3,2ex);
 \node[below left,text width=3cm] at (0,0) {\phantom{i}Proposition~\ref{theo:tableautree} (density method)};
 \end{tikzpicture}
 }}{\,\,\,=\,\,\,}
\frac{2}{p \ell} \frac{N-E_{\Tc}(\V)}{ n^{1+\delta}}
 \underset{
	 \mathclap{
 \begin{tikzpicture}
 \draw[->] (0, 0) to ++(0,2ex);
 \node[below,text width=3cm] at (0,0) {Proposition~\ref{prop:TeqS} \phantom{\,}(order statistics)};
 \end{tikzpicture}
		}}
	{\,\,\,\sim\,\,\,}
 \frac{1}{p+\ell} \frac{\Ss-E_{\Sc}(\V)}{ n^{\delta}}
 \underset{\mathrlap{
 \begin{tikzpicture}
 \draw[->] (0.3, 0) to[bend left=20] ++(-0.3,2ex);
 \node[below right,text width=2.8cm] at (0,0) {Proposition~\ref{prop:urntree} \phantom{aai}(P\'olya urn)};
 \end{tikzpicture}
 }}{\,\,\,=\,\,\,}
\frac{1}{p+\ell} \frac{B_{(n-1)p}}{n^\delta}.
\end{equation} 
In conjunction with Theorem~\ref{theo:PGG} proven via analytic combinatorics methods, this implies that the four quantities 
in~\eqref{fullstory}
converge in law to the distribution $\PGG(p,\ell,b_0,w_0)$, when $\delta=p/(p+\ell)$.
This is exactly the statement of Theorem~\ref{TheoremCorner}.

\smallskip

\noindent\textit{Nota bene:} It should be stressed that the sequence of transformations in~\eqref{fullstory} is \textit{not} a bijection between Young tableaux and urns,
it is only \textit{asymptotically} that the corresponding distributions are equal.

\smallskip

\begingroup 
The perspicacious reader would have noted that in the previous pages, we used several small 
lemmas and propositions which were stated with slightly more generality than what was a priori needed.
In fact, this now allows us to state an even stronger version of Theorem~\ref{TheoremCorner}.
(It would have been not pedagogical to introduce it first: we think it would have been
harder for the reader to digest the different key steps/definitions/figures used in the proof.)
In order to state this generalization to any Young tableau with a more general periodic shape,
we need a slight extension of the shape $\lambda_1^{i_1}\cdots \lambda_n^{i_n}$
introduced in Definition~\ref{ShapeTableau}:
we allow some of the indices $i_k$ to be equal to zero, in which case there is no column of height $\lambda_k$:
\begin{definition}[Periodic tableau] \label{def:PeriodicYoungTableau}
 For any tuple of nonnegative integers $(\ell_1,\ldots, \ell_p)$, a tableau with periodic pattern shape $(\ell_1,\ldots, \ell_p ; n)$ is a tableau with shape
\begin{equation*} \big( (np)^{\ell_p}(np-1)^{\ell_{p-1}}\cdots (np-p+1)^{\ell_1} \big) \,\times\, \big( ((n-1)p)^{\ell_p}\cdots ((n-1)p-p+1)^{\ell_1}\big) \,\times\, \cdots \,\times\, \big(p^{\ell_p} \cdots 1^{\ell_1}\big).
\end{equation*}
 A uniform random Young tableau with periodic pattern shape $(\ell_1,\ldots, \ell_p ; n)$ 
 is a uniform random 
 filling of a tableau with periodic pattern shape $(\ell_1,\ldots, \ell_p ; n)$.
\end{definition}

Let us put the previous pattern in words: 
 we have a tableau made of $n$ blocks, each of these blocks consisting of $p$ smaller blocks of length $\ell_p,\ldots, \ell_1$,
and the height decreases by $1$ between each of these smaller blocks.
This leads to a tableau length $(\ell_1+\dots+\ell_p) n$, which repeats periodically the same sub-shape along its hypotenuse. 

Note that the triangular Young tableau of parameters $(\ell,p,n)$ from Definition~\ref{triangYoung}
corresponds to Definition~\ref{def:PeriodicYoungTableau} for the $(p+1)$-tuple $(0,\ldots, 0,\ell ; n)$. 
In order to state our main result in full generality, we extend the above-defined Young tableau by additional rows from below.

\begin{definition}\label{GeneralPatternFull}
 Let $b_0,w_0 >0$.
 A tableau of shape 
	$\lambda_1^{i_1}\cdots\lambda_n^{i_n}$ 
	\emph{shifted by a block $b_0^{w_0}$} is a tableau of shape 
	$(\lambda_1+b_0)^{i_1}\cdots(\lambda_n+b_0)^{i_n} b_0^{w_0}.$
\end{definition}

We can now state the main theorem of this section:

\begin{theorem}[The distribution of the south-east entry in periodic Young tableaux] \label{TheoremCornerGen}
 Choose a uniform random Young tableau with periodic pattern shape $(\ell_1,\ldots, \ell_p ; n)$ 
	shifted by a block~$b_0^{w_0}$.
Let $N$ be its size, set $\ell := \ell_1+\dots+\ell_p$ and $\delta:=p/(p+\ell)$.
Let $\Yn$ be the entry of the south-east corner.
Then $(N-\Yn)/n^{1+\delta}$ converges in law to the same limiting
distribution as the number of black balls in the periodic Young--P\'olya urn 
with initial conditions $(b_0,w_0)$ and 
with replacement matrices $M_i= 
		\begin{pmatrix}
			1 & \ell_i \\
			0 & 1+\ell_i
		\end{pmatrix}$:
\begin{equation}
\frac{2}{p \ell} \frac{N-\Yn}{ n^{1+\delta} } \stackrel{\mathcal L}{\longrightarrow} \operatorname{Beta}(b_0,w_0) 
	\prod_{\substack{i=1 \\i\neq\ell_1+\dots+\ell_j+j \\ \text{ with } 1\leq j \leq p-1}}^{p+\ell-1} \GG(b_0+w_0+i, p+\ell).
\end{equation}
\end{theorem}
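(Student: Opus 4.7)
The plan is to mimic the four-step strategy used for Theorem~\ref{TheoremCorner}, adapting each step to the more general periodic shape. All the heavy lifting has already been done: the density method (Section~\ref{densityYoung}), the tree--urn correspondence (Proposition~\ref{prop:urntree}), the order-statistics bridge (Proposition~\ref{prop:TeqS}), and the limit law for the general urn (Theorem~\ref{theo:general}). The task is just to recombine them with the right parameters.

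First, I would build the tree $\Sc$ corresponding to the Young tableau. Writing the shifted periodic shape in the notation $\lambda_1^{i_1}\cdots\lambda_n^{i_n}$, the columns come in successive ``bunches'' of widths $\ell_p,\ell_{p-1},\ldots,\ell_1$ whose heights decrease by one at each new bunch, terminated by $w_0$ columns of height $b_0$ coming from the shift. Applying Proposition~\ref{theo:tableautree} and Definition~\ref{ShapeTree}, this gives a tree $\Sc$ whose shape $(i_1,j_1;\ldots;i_n,j_n)$ is explicitly dictated by the pattern $(\ell_1,\ldots,\ell_p)$ and the block $b_0^{w_0}$. Extending $\Sc$ to the forest $\Tc^{*}=\Sc\cup\Sc'$ with $|\Sc'|$ extra isolated vertices and then adding a root to form $\Tc$, Proposition~\ref{theo:tableautree} yields $1+E_{\Yc}(v)\=E_{\Tc}(v_m)$, where $v_m$ is the $m$-th vertex on the leftmost branch of $\Tc$.

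Second, I would apply Proposition~\ref{prop:urntree} to $\Sc$. By the way the shape parameters $(i_k,j_k)$ have been chosen, the cycle of replacement matrices produced by that proposition matches exactly the periodic cycle $M_1,\ldots,M_p$ with $M_i=\begin{pmatrix}1 & \ell_i \\ 0 & 1+\ell_i\end{pmatrix}$, while the final block (coming from the $b_0^{w_0}$ shift) produces the initial urn configuration $(b_0,w_0)$. Hence $|\Sc|-E_{\Sc}(v_m)$ has the same law as the number of black balls after $(n-1)(p+\ell)$ steps of the periodic Young--P\'olya urn of Theorem~\ref{theo:general}. Third, I would transfer the conclusion from $\Sc$ to $\Tc$ via Proposition~\ref{prop:TeqS}: since $|\Sc|=\Theta(n)$ while $N=|\Tc|=\Theta(n^2)$, the Bienaym\'e--Chebyshev estimate on the order statistic counting how many leaves of $\Sc'$ exceed $E_\Tc(v_m)$ shows that
\begin{equation*}
\frac{2}{p\ell}\,\frac{N-E_{\Tc}(v_m)}{n^{1+\delta}} \quad\text{and}\quad \frac{1}{p+\ell}\,\frac{|\Sc|-E_{\Sc}(v_m)}{n^{\delta}}
\end{equation*}
share the same limiting distribution, the constant $p\ell/(2(p+\ell))$ arising from the ratio $N/|\Sc|$ exactly as in the triangular case.

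Combining these three reductions with Theorem~\ref{theo:general}, which identifies the limit law of $\frac{p^\delta}{p+\ell}\frac{B_n}{n^\delta}$ as the product $\operatorname{Beta}(b_0,w_0)\prod \GG(b_0+w_0+i,\,p+\ell)$ (over the indices $i$ appearing in the statement), yields the announced convergence. The main obstacle I anticipate is combinatorial bookkeeping rather than a new analytic idea: one must verify that the tree shape parameters $(i_k,j_k)$ produced by the shifted periodic pattern, when fed into Proposition~\ref{prop:urntree}, produce \emph{exactly} the urn with replacement matrices $M_i$ and initial conditions $(b_0,w_0)$ stated in the theorem, and that the scaling constant $2/(p\ell)$ tracks through the two bijections and the order-statistics step with the right factors of $p+\ell$.
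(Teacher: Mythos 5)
Your proposal is correct and follows essentially the same route as the paper, whose proof of Theorem~\ref{TheoremCornerGen} is precisely to rerun the chain of~\eqref{fullstory} --- Proposition~\ref{theo:tableautree} (density method), Proposition~\ref{prop:TeqS} (order statistics), Proposition~\ref{prop:urntree} (tree-to-urn) --- with the shifted periodic shape, and then conclude via Theorem~\ref{theo:general}. The only slip is in your step count: Proposition~\ref{prop:urntree} produces one urn draw per column of the tableau, so $|\Sc|-E_{\Sc}(v_m)$ matches the urn after $(n-1)p$ draws (the $\ell$ extra white balls per period are added without a draw), not $(n-1)(p+\ell)$; this matters for the normalizing constant but is exactly the bookkeeping you flagged at the end.
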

\begin{proof}
One just follows the same steps as in~\eqref{fullstory}.
The final proof holds \textit{verbatim}, only the equality $N = \frac{p \ell}{2}n(n+1)$ has to be replaced by an asymptotic $N \sim \frac{pl}{2}n$, which is anyway the only information that is used.
One then concludes via Theorem~\ref{ProdGenGammaGeneral}. 
\end{proof}

\bigskip
In the next section, we discuss some consequences of our results in the context of limit shapes of random Young tableaux. 
\endgroup

\section{Random Young tableaux and random surfaces}\label{Sec5}
There is a vast and fascinating literature related to the asymptotics of Young tableaux when their shape is free, but the number of cells is going to infinity:
it even originates from the considerations of Erd\H{o}s, Szekeres, and Ulam
on longest increasing subsequences in permutations 
(see~\cite{AldousDiaconis99,Romik15} for a nice presentation of these fascinating aspects). There, algebraic combinatorics and variational calculus appear to play a key r{\^o}le,
as became obvious with the seminal works of Vershik and Kerov, Logan and Shepp~\cite{VershikKerov77, LoganShepp77}.
The asymptotics of Young tableaux when the shape is constrained is harder to handle,
and this section tackles some of these aspects.

\nocite{BorodinOlshanski17,McKayMorseWilf02,Petrov15}
\subsection{Random surfaces}
\begingroup
\begin{figure}[ht!]
		\begin{center}	
			\includegraphics[width=1\textwidth]{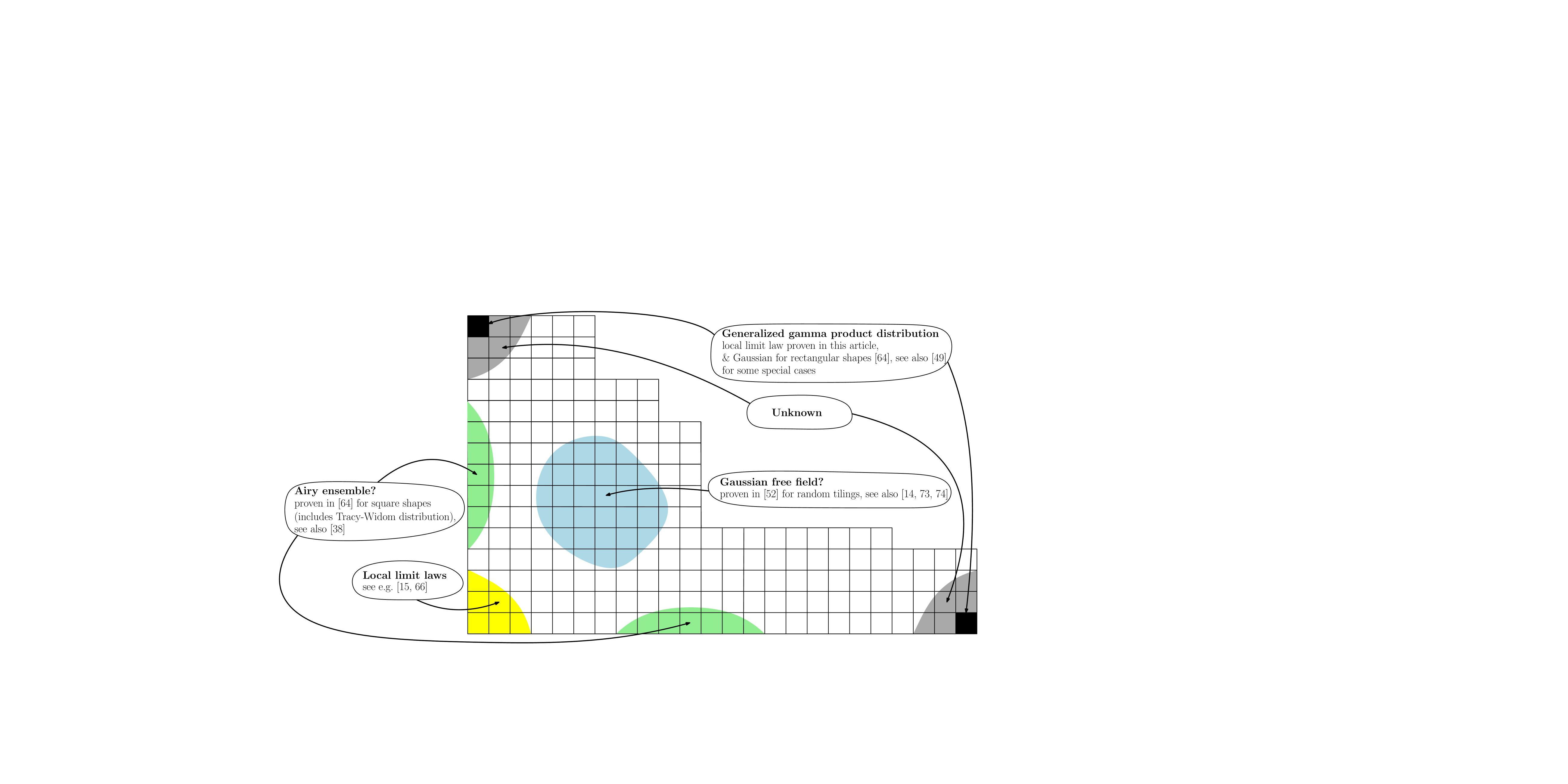}	
			\internallinenumbers
			\caption{Known and conjectured limit laws of random Young tableaux. 
Would it one day lead to a nice notion of ``continuous Young tableau''?} \label{fig:tablimit} 	
		\end{center}
\end{figure}
\endgroup
\def\Tr{\rm Tr} 
Figure~\ref{fig:tablimit} illustrates some known results and some conjectures 
on ``the continuous'' limit of Young tableaux (see also the notion of continual Young tableaux in~\cite{Kerov93}). 
Let us now explain a little bit 
what is summarized by this figure, which, in fact, refers to different levels of renormalization in order to catch the right fluctuations.
It should also be pinpointed that some results are established under the Plancherel distribution, while some others are established under the uniform distribution
(like in the present work).

First, our Theorem~\ref{TheoremCorner} can be seen as a result on random surfaces arising from
Young tableaux with a fixed shape. Let us be more specific. Consider a fixed rectangular triangle
$\Tr$ where the size of the edges meeting at the right angle are $p$ and $q$, respectively, where
$p$ and $q$ are integers. One can approximate $\Tr$ by a sequence of tableaux
$({\cal Y}_n)_{n\geq 0}$ of the same form as~$\cal Y$ in Section~\ref{sec:Tableaux} where the size
of the sides meeting at the right angle are $pn$ and $qn$.

\pagebreak

For each of these tableaux, one can pick a random standard filling and one can interpret it as a random discretized surface. 
More precisely, if $0\leq x\leq p$ and $0\leq y\leq q$ are two reals
and if the entry of the cell $(\left \lfloor{xn}\right \rfloor, \left \lfloor{yn}\right \rfloor)$
is $z$, then we set $f_n(x,y):=2z/(pqn^2)$. Thereby,
we construct a random function $f_n: \Tr\to[0,1]$ which is {\bf dis}continuous but it is to be expected
that, in the limit, the functions $f_n$ converge in probability to a deterministic,
continuous function $f$ (see Figure~\ref{fig:surfacesT}).
Intuitively, for every point $(x,y)$ on the hypotenuse, one will have $f(x,y)=1$ and this is
the case in particular for the south-east corner, that is, the point $(p,0)$. Then, one can view
Theorem~\ref{TheoremCorner} as a result on the fluctuations of the random quantity
$f_n(p,0)$ away from its deterministic limit, which is $1$.

\begingroup
\begin{figure}[ht!]
		\begin{center}	
			\includegraphics[height=55mm]{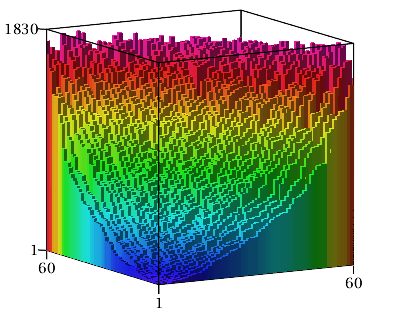} \hfill
			\includegraphics[height=55mm]{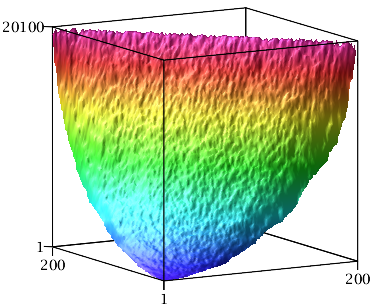} 
		\end{center}
		\begin{center}	
			\includegraphics[width=0.49\textwidth]{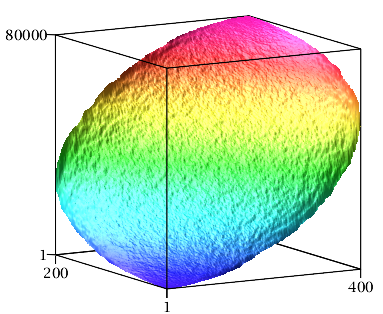} \hfill
			\includegraphics[width=0.47\textwidth]{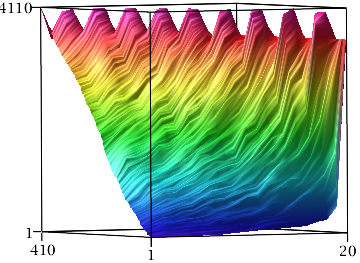} \\
	\internallinenumbers
			\caption{Random generation  
			of Young tableaux, seen as random surfaces (the colours correspond to level lines):\newline
\text{ }\hspace{1.5cm}$\bullet$ top: triangular Young tableaux (size $60\times60$, seen as histogram, and $200\times200$),\newline
\text{ }\hspace{1.5cm}$\bullet$ bottom: rectangular and triangular Young tableaux ($400\times200$ and $410\times 20$).\newline 
If one watches such surfaces from above, then one sees exactly the triangular/rectangular shapes, but one loses the 3D effect.
The images are generated via our own Maple package available at \url{https://lipn.fr/~cb/YoungTableaux}, relying on a variant of the hook-length walk of~\cite{GreeneNijenhuisWilf84}.
}
	\label{fig:surfacesT}
		\end{center}
\end{figure}
\endgroup

\begingroup
As a matter of fact, the convergence of $f_n$ to $f$ has only been studied when the shape of the tableau is fixed.
The convergence towards a limiting surface was first proven
 when the limit shape is a finite union of rectangles; see Biane~\cite{Biane98}. There, the limiting surface 
 can be interpreted in terms of characters of the symmetric group and free probability but this leads to
 complicated computations from which it is difficult to extract explicit expressions.
 
For rectangular Young tableaux, the limiting surface is described more precisely by Pittel and Romik~\cite{PittelRomik07}. 
A limiting surface also exists for staircase tableaux: it can be obtained by taking the limiting surface of a square tableau and cutting it along the diagonal; see~\cite{AngelHolroydRomikVirag07,LinussonPotkaSulzgruber18}. 
This idea does not work for {\em rectangular} (non-square) Young tableaux: 
if one cuts such tableaux along the diagonal, one 
{\em does not} get the limiting surface of triangular Young tableaux 
(the hypotenuse would have been the level line $1$, but the diagonal is in fact not even a level line, as visible in Figure~\ref{fig:surfacesT} and proven in~\cite{PittelRomik07}).

Apart from the particular cases mentioned above, convergence results for surfaces arising from Young tableau 
seem to be lacking. There are also very few results about the fluctuations away from the limiting surface. 
For rectangular shapes, these fluctuations were studied by Marchal~\cite{Marchal16}:
they are Gaussian in the south-east and north-west corner, 
while the fluctuations on each edge follow a Tracy--Widom limit law, 
at least when the rectangle is a square (for general rectangles, there remain some technicalities,
although the expected behaviour is the same). For staircase triangles, Gorin and Rahman~\cite{GorinRahman18}
use a sorting network representation to obtain asymptotic formulas using double integrals. In particular, they find the
limit law on the edge. Their approach may be generalizable to other triangular shapes.
Also, instead of renormalized limits, one may be interested in local limits,
there are then nice links with the famous jeu de taquin~\cite{Sniady14} and characters of symmetric groups~\cite{BorodinOlshanski17}.

There is another framework where random surfaces naturally arise, namely random tilings and related structures
(see e.g.~\cite{Sheffield05}). Indeed,
one can associate a height function with a tiling: this gives an interpretation as a surface. 
In this framework, similarly to the Young tableaux, there are results on the fluctuations of these surfaces. 
In the case of the Aztec diamond shape, Johansson and Nordenstam~\cite{JohanssonNordenstam07} proved
that the fluctuations of the Artic curve are related to eigenvalues of GUE minors 
(and are therefore Gaussian near the places where the curve is touching the edges, 
whereas they are Tracy--Widomian when the curve is far away from the edges). 
Note that this gives the same limit laws as for the Artic curve of a TASEP jump process
associated to rectangular Young tableaux~\cite{Romik12,Marchal16}.
Similar results were also obtained 
for 
pyramid partitions~\cite{BouttierChapuyCorteel17, BoutillierBouttierChapuyCorteelRamassamy17}.
Moreover, in other models of lozenge tilings, it is proven
that for some singular points, other limit laws appear: they are called cusp-Airy distributions,
and are related to the Airy kernel~\cite{DuseJohanssonMetcalfe16}.
It has to be noticed that, up to our knowledge, the generalized gamma distributions, which
appear in our results, have not been found in the framework of random~tilings.

\smallskip
A major challenge would be to capture the fluctuations of the surface in the interior of the domain.
For Young tableaux, it is reasonable to conjecture that 
these fluctuations could be similar to those observed for random tilings:
in this framework, Kenyon~\cite{Kenyon01} and Petrov~\cite{Petrov15}
proved that the fluctuations are given by the Gaussian free field (see also~\cite{BufetovGorin19}).
\smallskip

Finally, a dual question would be: in which cell does a given 
entry lie in a random 
filling of the tableau? In the case of triangular shapes 
like ours, if we look at the largest entry, we get:\endgroup 
\begin{proposition}[Limit law for the location of the maximum in a triangular Young tableau]
Choose a uniform random triangular Young tableau of parameters $(\ell,p,n)$ (see Definition~\ref{triangYoung}). 
Let $\operatorname{Posi}_n\in \{1,\ldots,\ell n\}$ be the $x$-coordinate of the 
cell containing the largest entry. 
Then, one has
\begin{equation*}\frac{\operatorname{Posi}_n}{\ell n} \stackrel{\mathcal L}{\longrightarrow} \operatorname{Arcsine}(\delta), \text{\,\, where $\delta:=p/(p+\ell)$.}
\end{equation*}
\end{proposition}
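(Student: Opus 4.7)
The largest entry $N=p\ell n(n+1)/2$ of the tableau must sit at one of the $n$ outer corners of $\lambda$, which for the triangular shape are the cells $c_i:=(i\ell,(n-i+1)p)$ for $i\in\{1,\ldots,n\}$; thus $\operatorname{Posi}_n=I\ell$ for an index $I\in\{1,\ldots,n\}$, and the task reduces to identifying the limit law of $I/n$. My starting point is the identity
$$\mathbb{P}(I=i)=\frac{f_{\lambda\setminus c_i}}{f_\lambda}=\frac{1}{N}\prod_{\substack{c\in\mathrm{row}(c_i)\cup\mathrm{col}(c_i)\\ c\neq c_i}}\frac{h_\lambda(c)}{h_\lambda(c)-1},$$
which follows from the hook length formula together with the observations that $h_\lambda(c_i)=1$ and that removing $c_i$ decreases by exactly $1$ only those hooks lying in its row (arm decrease) or column (leg decrease).

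Next I would evaluate this product in closed form by exploiting the block structure of $\lambda$. Along the row of $c_i$ the cells split into $i$ groups of $\ell$ consecutive columns; a direct computation of arms and legs shows that inside block $m<i$ the hooks form the arithmetic progression $(i-m)(\ell+p)+1,\ldots,(i-m)(\ell+p)+\ell$, whereas the rightmost block (containing $c_i$) supplies the hooks $1,2,\ldots,\ell$. Each block contributes a telescoping factor to $\prod h/(h-1)$, and using $\ell/(\ell+p)=1-\delta$ one obtains
$$\text{row product}=\ell\prod_{r=1}^{i-1}\frac{r(\ell+p)+\ell}{r(\ell+p)}=\ell\,\frac{\Gamma(i+1-\delta)}{\Gamma(2-\delta)\,\Gamma(i)}.$$
A symmetric argument along the column of $c_i$ (blocks of $p$ rows, with $p/(\ell+p)=\delta$) yields $\text{col product}=p\,\Gamma(n-i+1+\delta)/[\Gamma(1+\delta)\,\Gamma(n-i+1)]$, and substituting back gives an exact closed form for $\mathbb{P}(I=i)$ as a product of four gamma ratios.

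For the asymptotic step I set $i=\lceil un\rceil$ with $u\in(0,1)$ and apply Stirling's estimate $\Gamma(m+a)/\Gamma(m)\sim m^{a}$ together with $N\sim p\ell n^{2}/2$ to obtain
$$n\,\mathbb{P}(I=\lceil un\rceil)\xrightarrow[n\to\infty]{}\frac{2\,u^{1-\delta}(1-u)^{\delta}}{\Gamma(2-\delta)\,\Gamma(1+\delta)}=\frac{u^{1-\delta}(1-u)^{\delta}}{B(2-\delta,\,1+\delta)},$$
which is the density of the generalized arcsine law $\operatorname{Arcsine}(\delta)=\operatorname{Beta}(2-\delta,\,1+\delta)$ on $[0,1]$; the identity $\sum_{i}\mathbb{P}(I=i)=1$ automatically enforces the correct normalization.

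The main technical obstacle I foresee is upgrading this pointwise convergence of discrete masses to weak convergence of $I/n$: one needs uniformity of the Stirling remainders on compact subsets of $(0,1)$, and one must check that no mass escapes to the endpoints $u\in\{0,1\}$, where the individual gamma ratios degenerate. Since the limit density $u^{1-\delta}(1-u)^{\delta}$ is bounded and integrable on $[0,1]$, both issues can be handled directly from the exact closed form --- the boundary contributions are of order $O(n^{-\min(1-\delta,\,\delta)})$ --- and Scheffé's lemma then delivers the convergence in law.
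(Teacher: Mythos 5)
Your approach is exactly the paper's: express $\PR(\operatorname{Posi}_n=i\ell)$ as the hook-length ratio $f_{\lambda\setminus c_i}/f_\lambda$, reduce it to a product over the row and column of the removed corner, telescope block by block, and finish with Stirling. Your intermediate steps are all correct, and in fact more careful than the paper's own display, which inverts the ratio (it shows $\prod \operatorname{hook}/(1+\operatorname{hook})$, whereas the correct quantity is $\tfrac{1}{N}\prod(1+\operatorname{hook})/\operatorname{hook}$) and omits the factor $1/N$. Your exact closed form can be checked numerically: for the staircase $p=\ell=1$, $n=3$, it gives probabilities $(5,6,5)/16$ for $i=1,2,3$, which matches direct enumeration of the $16$ standard Young tableaux of shape $(3,2,1)$; note that this distribution is bell-shaped, not U-shaped. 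The Scheff\'e argument at the end is also the right way to pass from pointwise convergence of the rescaled masses to convergence in law.

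The genuine flaw is your last sentence. The density you derive, $u^{1-\delta}(1-u)^{\delta}/B(2-\delta,1+\delta)$, is that of $\operatorname{Beta}(2-\delta,1+\delta)$ (the semicircle law on $[0,1]$ when $\delta=1/2$), which vanishes at the endpoints. The generalized arcsine law $\operatorname{Arcsine}(\delta)$ is $\operatorname{Beta}(\delta,1-\delta)$, with U-shaped density $u^{\delta-1}(1-u)^{-\delta}/(\Gamma(\delta)\Gamma(1-\delta))$ --- this is exactly the density the proposition and the paper's proof assert, and it is a different distribution. The identification $\operatorname{Arcsine}(\delta)=\operatorname{Beta}(2-\delta,1+\delta)$ is false, so you cannot conclude the statement as written; what your (correct) computation actually establishes is that the stated limit law should be $\operatorname{Beta}(2-\delta,1+\delta)$, the paper's asymptotic step having effectively evaluated the reciprocal of the right hook ratio and thereby landed on the reciprocal exponents $\delta-1$ and $-\delta$. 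You should report this discrepancy and state the corrected limit rather than relabeling your answer to match the claim; note also that the qualitative consequence drawn in the paper (the maximum drifting towards the bottom of the hypotenuse when $p\gg\ell$) gets reversed accordingly.
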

\begingroup 
\begin{proof}
Remove from the Young tableau $\Yc$ the cell containing its largest entry, and call $\Yc^*$ this new tableau.
Then, using the hook length formula, the probability that the largest entry of $\Yc$ is situated at $x$-coordinate~$k \ell$ is
\begin{equation}
\def\hook{\operatorname{hook}}
\PR(\operatorname{Posi}_n=k \ell)=\frac{ \prod_{c\in \Yc^*} \hook_{\Yc^*}(c)}{ \prod_{c\in \Yc} \hook_\Yc(c)} 
= \prod_{\substack{c \in {\Yc^*} \text{with $(x$-coord of $c) =k\ell$}\\ \text{ or $(y$-coord of $c)= (n-k+1)p$ }}} \frac{\hook_{\Yc^*}(c)}{1+\hook_{\Yc^*}(c)}\,.
\end{equation} An easy computation then gives (with $\delta=p/(p+\ell)$): 

\begin{equation}
\def\hook{\operatorname{hook}}
\PR(\operatorname{Posi}_n=k \ell) \sim
\frac{(k/n)^{\delta-1}(1-k/n)^{-\delta}}{\Gamma(\delta)\Gamma(1-\delta)} \frac{1}{n}.
\end{equation}
Here, one recognizes an instance of the generalized arcsine law 
on $[0,1]$ with density
\begin{equation*}
\frac{x^{\delta-1}(1-x)^{-\delta}}{\Gamma(\delta)\Gamma(1-\delta)}\,. \qedhere
\end{equation*}
\end{proof}
So, if we compare models with different $p$ and $\ell$, 
then the largest entry will have the tendency to be on the top of the hypotenuse when $\ell$ is much larger than $p$, 
while it will be on its bottom if $p$ is much larger than $\ell$ 
(and on the bottom or the top with equally high probabilities when $p\approx \ell$); see~Figure~\ref{fig:surfacesT}. 
This is in sharp contrast with the case of an $n\times n$ square tableau where, for $t\in(0,1)$,
the cell containing the entry $tn^2$ is asymptotically distributed according to the Wigner semicircle law
on its level line; see\cite{PittelRomik07}.
We also refer to Romik~\cite{Romik04} for further discussions on Young tableau landscapes and to Morales, Pak, and Panova~\cite{MoralesPakPanova17} for recent results on skew-shaped tableaux.

\pagebreak

\subsection{From microscopic to macroscopic models: universality of the tails}\label{slope}

\begin{figure}[ht!]
		\begin{center}	
			\includegraphics[width=0.3\textwidth]{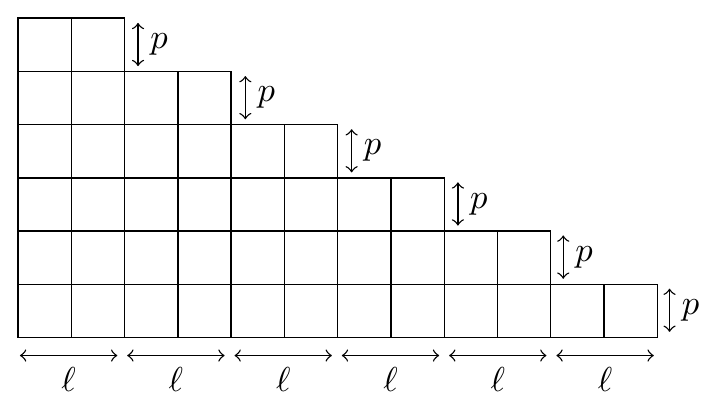} 
			\quad			
			\includegraphics[width=0.3\textwidth]{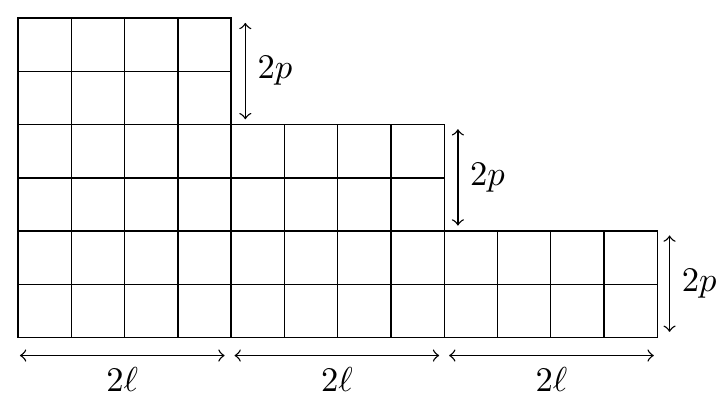} 
			\quad			
			\includegraphics[width=0.3\textwidth]{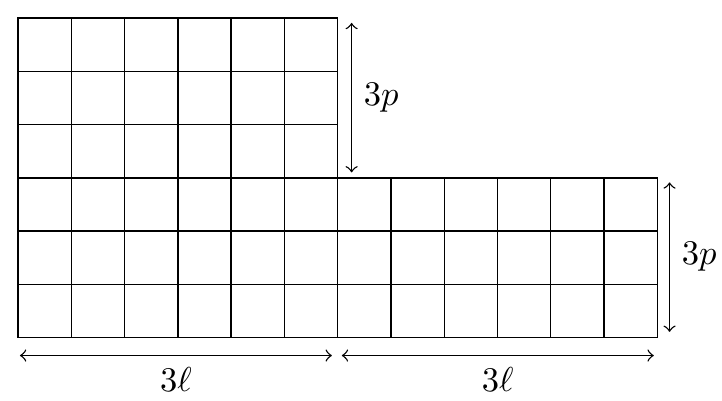} \internallinenumbers
			\caption{Different discrete models converge towards a tableau of slope $-p/\ell$. 
As usual for problems related to urns, many statistics have a sensibility to the initial conditions; 
it is therefore nice that some universality holds: 
the distributions (depending on~$p$,~$\ell$, and the ``zoom factor''~$K$) of our statistics 
have similar tails compared to Mittag-Leffler distributions.} 
		\label{fig:slope} 	
		\end{center}
\end{figure}

As mentioned in the previous section, we can approximate a triangle of slope $-p/\ell$ by a tableau
of parameters $(\ell,p,n)$ but what happens if we approximate it by a tableau of parameters $(K\ell, Kp, n)$ for any ``zoom factor'' $K\in \N$? (See Figure~\ref{fig:slope}.)
In the first case, we obtain as a limit law in the south-east corner $\PGG(p,\ell,p,\ell)$
whereas in the second case, we get the law $\PGG(Kp, K\ell, Kp, K\ell)$ and these two distributions are
different.

In fact, we could even imagine more general periodic patterns as in Theorem~\ref{TheoremCornerGen} 
corresponding to the same macrosopic object. All these models lead to different asymptotic distributions. 
However, we partially have some universal phenomenon in the sense that, although these 
limit distributions are different, they are closely related
by the fact that their tails are similar to the tail of a Mittag-Leffler distribution. 
\endgroup
\begingroup
\begin{definition}[Similar tails] \label{defTails}
One says that two random variables $X$ and $Y$ have similar tails and one writes $X\asymp Y$ if 
\begin{equation}\label{logtail}
 \frac{\log\frac{\E(X^r)}{\E(Y^r)}}{r}\to 0, \text{\qquad as $r\to\infty$}.
\end{equation}
\end{definition}
This definition has the advantage to induce an equivalence relation between random variables
which have moments of all orders: if $X,Y$ are in the same equivalence class, then 
for every $\varepsilon\in(0,1)$, for $r$ large enough, one has
\begin{equation*}
\E(((1-\varepsilon)X)^r)\leq \E(Y^r)\leq \E(((1+\varepsilon)X)^r)\,.
\end{equation*}
In the proof of the following theorem, we give much finer asymptotics than the above bounds.
\begin{theorem}[Similarity with the tail of a Mittag-Leffler distribution] 
 \label{prop:tailsML}
Let $X$ be a random variable distributed as $\PGG([\ell_1,\ldots,\ell_p];b_0,w_0)$ and put 
$\ell = \ell_1+\dots+\ell_p$, $\delta=p/(p+\ell)$. 
Let $Y:=\operatorname{ML}(\delta,\beta)$ 
(where $\operatorname{ML}$ is the Mittag-Leffler distribution defined as in~\eqref{MLmoments} hereafter, with any $\beta >-\delta$).
Then $X$ and $\delta p^{\delta-1}Y$ have similar tails in the sense of Definition~\ref{defTails}.
\end{theorem}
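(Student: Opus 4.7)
The plan is to reduce everything to a careful application of Stirling's formula on both sides. Write $X \sim \PGG([\ell_1,\ldots,\ell_p];b_0,w_0)$. By Theorem~\ref{theo:general} (and the argument leading to equation~\eqref{m_r}), the moments of $X$ can be expressed, up to constants in $r$, as a ratio of gamma functions of the form
\begin{equation*}
\E(X^r) \;=\; C \cdot \frac{\Gamma(b_0+r)}{\Gamma(s_0+r)} \prod_{i \in I} \frac{\Gamma\bigl(\frac{s_0+r+i}{p+\ell}\bigr)}{\Gamma\bigl(\frac{s_0+i}{p+\ell}\bigr)},
\end{equation*}
where $I$ is a set of $\ell$ indices (depending on the $\ell_j$'s) and $C$ is independent of $r$. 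On the Mittag-Leffler side, $Y=\operatorname{ML}(\delta,\beta)$ has moments of the form
\begin{equation*}
\E(Y^r) \;=\; C' \cdot \frac{\Gamma(r+\beta/\delta+1)}{\Gamma(\delta r + \beta + 1)}
\end{equation*}
(see~\eqref{MLmoments}, introduced just after this theorem). Both expressions have the same qualitative shape: a ratio of gamma functions with arguments that are affine in $r$, with a ``contraction factor'' $1/(p+\ell)$ respectively $\delta$ in the denominator gamma.

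The first step is to apply the expansion $\log\Gamma(ar+b)=ar\log r+ar(\log a-1)+(b-\tfrac12)\log r+O(1)$ termwise. For $\log\E(X^r)$, the $r\log r$ contribution of $\Gamma(b_0+r)$ is cancelled by that of $\Gamma(s_0+r)$, while each of the $\ell$ factors in the product contributes $\frac{r}{p+\ell}\log r$; hence the total $r\log r$ coefficient is $\frac{\ell}{p+\ell}=1-\delta$. The subleading linear-in-$r$ part equals $-(1-\delta)r\bigl[\log(p+\ell)+1\bigr]$, independently of the precise index set $I$ and of $b_0,w_0$. Thus
\begin{equation*}
\log\E(X^r)=(1-\delta)r\log r-(1-\delta)r\bigl[\log(p+\ell)+1\bigr]+O(\log r).
\end{equation*}

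The second step is the analogous expansion for $Y$: one finds $\log\E(Y^r)=(1-\delta)r\log r-(1-\delta)r-\delta r\log\delta +O(\log r)$, again independently of $\beta$. Rescaling by $\delta p^{\delta-1}$ adds exactly $r\log(\delta p^{\delta-1})=r\log\delta+(\delta-1)r\log p$, and collecting the linear-in-$r$ terms gives $-(1-\delta)r\bigl[1+\log(p/\delta)\bigr]=-(1-\delta)r\bigl[1+\log(p+\ell)\bigr]$, since $p/\delta=p+\ell$. The linear contributions therefore coincide exactly with those of $\log\E(X^r)$.

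The third step is the conclusion: subtracting the two asymptotic expansions, both the $r\log r$ and the linear-in-$r$ terms cancel, so
\begin{equation*}
\log\frac{\E(X^r)}{\E\bigl((\delta p^{\delta-1}Y)^r\bigr)}=O(\log r),
\end{equation*}
which after division by $r$ tends to $0$, giving~\eqref{logtail}. The anticipated main obstacle is purely bookkeeping: keeping track of the shifts $\frac{s_0+i}{p+\ell}$ (for $X$) and $\beta$ (for $Y$) and verifying that they only contribute to the $O(\log r)$ remainder, together with a sanity check that the prefactor $\delta p^{\delta-1}$ is indeed what makes the linear-in-$r$ terms match; the $r\log r$ terms match for ``dimensional'' reasons as soon as one recognizes that the critical exponent $\delta$ is shared by $\PGG$ and by $\operatorname{ML}(\delta,\beta)$.
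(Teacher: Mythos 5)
Your proposal is correct and follows essentially the same route as the paper: expand $\log\E(X^r)$ and $\log\E(Y^r)$ via Stirling's formula applied to the gamma-ratio moment expressions (\eqref{MomentsOfLove}/\eqref{m_r} for $\PGG$ and \eqref{MLmoments} for Mittag-Leffler), observe that the $r\log r$ coefficients both equal $1-\delta$ and that the linear-in-$r$ coefficients match after rescaling by $\delta p^{\delta-1}$, leaving an $O(\log r)$ discrepancy that vanishes after division by $r$. The only cosmetic difference is that you work with the post-Gauss-multiplication form of the moments (a product over $\ell$ gamma ratios plus a Beta factor) whereas the paper expands the $p$-factor product of \eqref{MomentsOfLove} directly; the resulting coefficients are identical.
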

\begin{proof}
First, recall from e.g.~\cite[page~8]{GoldschmidtHaas15} that the Mittag-Leffler distribution $\operatorname{ML}(\alpha,\beta)$ (where $0<\alpha<1$ and $\beta>-\alpha$) is determined
by its moments. Its $r$-th moment has two equally useful closed forms: 
\begin{align}\label{MLmoments}
	m_{\text{ML},r} &= 
\frac{\Gamma(\beta) \Gamma(\beta/\alpha+r)}{\Gamma(\beta/\alpha)\Gamma(\beta+\alpha r)}
= \frac{\Gamma(\beta+1) \Gamma(\beta/\alpha+r+1)}{\Gamma(\beta/\alpha+1)\Gamma(\beta+\alpha r+1)}.
\end{align}
Now, we prove that, for a fixed $\alpha$, the Mittag-Leffler distributions have similar tails.
 From the Stirling's approximation formula, we have
\begin{equation}\label{Stir}
\log \Gamma(\alpha r +\beta) = \alpha r \log(r) +(\alpha \log(\alpha) -\alpha) r + (\beta -\frac{1}{2}) \log(\alpha r)+ \frac{\log(2 \pi)}{2} +O\left(\frac{1}{r}\right).
\end{equation}
Applying this to the moments~\eqref{MLmoments} of the Mittag-Leffler distribution 
$Y=\operatorname{ML}(\alpha,\beta)$, we get
\begin{align}\label{EYr}
\quad \log \E(Y^r) 
= 
(1-\alpha) r \log(r) + \left( -\alpha\,\log(\alpha) +\alpha-1 \right) r
 + (\frac { \beta}{\alpha}-\beta)\log(r)+O(1),
\end{align}
and thus if one compares with another distribution $Y'=\operatorname{ML}(\alpha,\beta')$, this leads to $Y\asymp Y'$.

Next, we prove that $\PGG$ distributions with the same $\delta$ have similar tails.
The moments of $X=\PGG([\ell_1,\ldots,\ell_p];b_0,w_0)$ are given by 
Formula~\eqref{MomentsOfLove}.
Using the approximation~\eqref{Stir}, we get
\begin{align}
\log \E(X^r)
=& (1-\delta) r \log(r) +(1-\delta) \left(\log\left(\frac {\delta}{p}\right)-1\right) \,r\\
&+\left(b_0+s_0 \delta +\frac{(1+\delta) (p-1)}{2}-\frac{\delta}{p} \sum_{j=0}^{p-1} \sum_{k=1}^j \ell_k \right) \log(r) +O(1). \label{EXr}
\end{align}
Here, we see that in fact up to order $O(r)$ only the slope $\delta$ and the period length $p$ play a rôle; 
it is only at order $o(r)$ that $b_0$, $s_0$, and the $\ell_k$ really occur. Thus, 
if we now also consider $X'= \PGG([\ell'_1,\ldots,\ell'_{p'}];b'_0,w'_0)$, we directly deduce $X\asymp\left(\frac{p}{p'}\right)^{\delta-1} X'$.

Finally, we can compare the moments of $X$ (any $\PGG$ distribution associated to a slope $\delta$ and period $p$)
and $Y$ (any Mittag-Leffler distribution with $\alpha:=\delta$) via Formulas~\eqref{EYr} and~\eqref{EXr}, this leads to $X \asymp\delta p^{\delta-1} Y$.
\end{proof}

\begin{remark}
	The tails of this distribution are universal: they 
depend only on the slope~$\delta$ and the period length~$p$. They depend neither on the initial conditions $b_0$ and $w_0$, nor on further details of the geometry of the periodic pattern (the $\ell_i$'s).
\end{remark}

One more universal property which holds for some families of urn distributions is that they possess subgaussian tails,
a notion introduced by Kahane in~\cite{Kahane60} (see also~\cite{KubaSulzbach17} for some urn models exhibiting this behaviour):
\begin{definition}
	A random variable $X$ has subgaussian tails if there exist two constants $c,C > 0$, such that
	\begin{align*}
		\PR(|X| \geq t) \leq C e^{-c t^2}, \qquad t > 0.
	\end{align*}
\end{definition} 

\begin{proposition}
	\label{prop:tailsSubgaussian}
	The $\PGG(p,\ell,b_0,w_0)$ distributions have subgaussian tails if and only if $p \geq \ell$.
\end{proposition}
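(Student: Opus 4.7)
The plan is to exploit the moment characterization of subgaussianity: a nonnegative random variable $X$ has subgaussian tails if and only if there exists a constant $C$ such that $\E(X^r) \leq (C\sqrt{r})^r$ for every integer $r\geq 1$, which is equivalent to the upper bound $\log \E(X^r) \leq \tfrac{r}{2}\log r + O(r)$ as $r \to \infty$. This equivalence lets us translate the subgaussian question into a direct comparison of the growth rate of the $r$-th moment.

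First I would recall the asymptotic expansion already established in the proof of Theorem~\ref{prop:tailsML}: for $X \sim \PGG(p,\ell,b_0,w_0)$, Equation~\eqref{EXr} (specialized to a single matrix) gives
\begin{equation*}
\log \E(X^r) = (1-\delta)\, r\log r + O(r), \quad \text{with } \delta = \tfrac{p}{p+\ell}.
\end{equation*}
So the leading coefficient of $r \log r$ in $\log \E(X^r)$ is exactly $1-\delta = \ell/(p+\ell)$.

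Next I would compare this leading coefficient to the critical value $1/2$. The inequality $1-\delta \leq 1/2$ is equivalent to $\delta \geq 1/2$, which is equivalent to $p \geq \ell$. When $p \geq \ell$, the moment bound $\E(X^r) \leq (C\sqrt{r})^r$ follows from the expansion above (absorbing the $O(r)$ into $C^r$), so $X$ is subgaussian by the moment criterion.

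The converse is an easy contradiction: if $p < \ell$ then $1-\delta > 1/2$, so $\log \E(X^r) - \tfrac{r}{2}\log r \to +\infty$, which rules out any bound of the form $\E(X^r) \leq (C\sqrt{r})^r$ and hence rules out subgaussianity. The only mild care needed is that the moment characterization is phrased over integers while subgaussianity is a statement about all real moments, but this is standard (Hölder interpolation between integer moments gives the real-$p$ version at no cost). I do not expect any genuine obstacle here — the work has already been done in the asymptotic moment analysis of Theorem~\ref{prop:tailsML}, and the proposition is essentially a reading of the leading coefficient.
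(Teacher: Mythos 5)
Your proposal is correct and follows essentially the same route as the paper: the paper also invokes the moment criterion for subgaussian tails (citing Kahane's Proposition~9, i.e.\ $(\E(X^r))^{1/r} \leq K\sqrt{r}$) and reads off the exponent $1-\delta$ from the asymptotics $(m_r)^{1/r} \sim ((p+\ell)e)^{\delta-1} r^{1-\delta}$ already computed after Equation~\eqref{m_r}, concluding that subgaussianity holds precisely when $1-\delta \leq 1/2$, i.e.\ $p \geq \ell$. The only cosmetic difference is that you phrase the criterion over integer moments and interpolate, whereas the cited criterion is stated directly for all real $r>0$.
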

\begin{proof}
	The $\PGG$ distribution, as defined in Equation~\eqref{ProdGenGamma}, has moments given in Equation~\eqref{m_r}. As derived thereafter, it has moments asymptotically equivalent to 
	\begin{align*}
		(m_r)^{1/r} = ((p+\ell)e)^{(\delta-1)} r^{(1-\delta) } (1+\Landauo(1)).
	\end{align*}
	By \cite[Proposition~9]{Kahane60}, a random variable $X$ has subgaussian tails if and only if there exists a constant $K >0$ such that for all $r>0$ we have $(\E(X^r))^{1/r} \leq K \sqrt{r}$. As $\delta = \frac{p}{p+\ell}$ the claim follows. 
\end{proof}

Another useful notion which helps to gain insight into the limit of Young tableaux is the notion of a \emph{level line}:
let $\mathcal{C}_v$ be the curve separating the cells with an entry bigger than $v$ and the cells with an entry smaller than~$v$
(and to get a continuous curve, one follows the border of the Young tableau if needed; see Figure~\ref{fig:C}).

\begin{figure}[ht!]
		\begin{center}	
			\includegraphics[width=.42\textwidth]{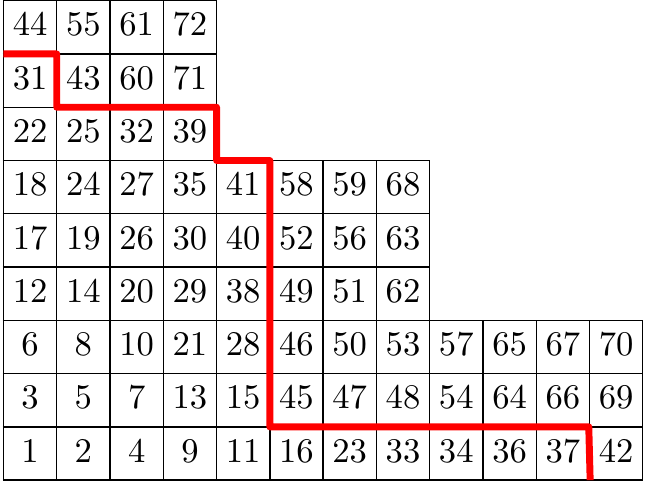} \qquad \qquad 
			\includegraphics[width=.42\textwidth]{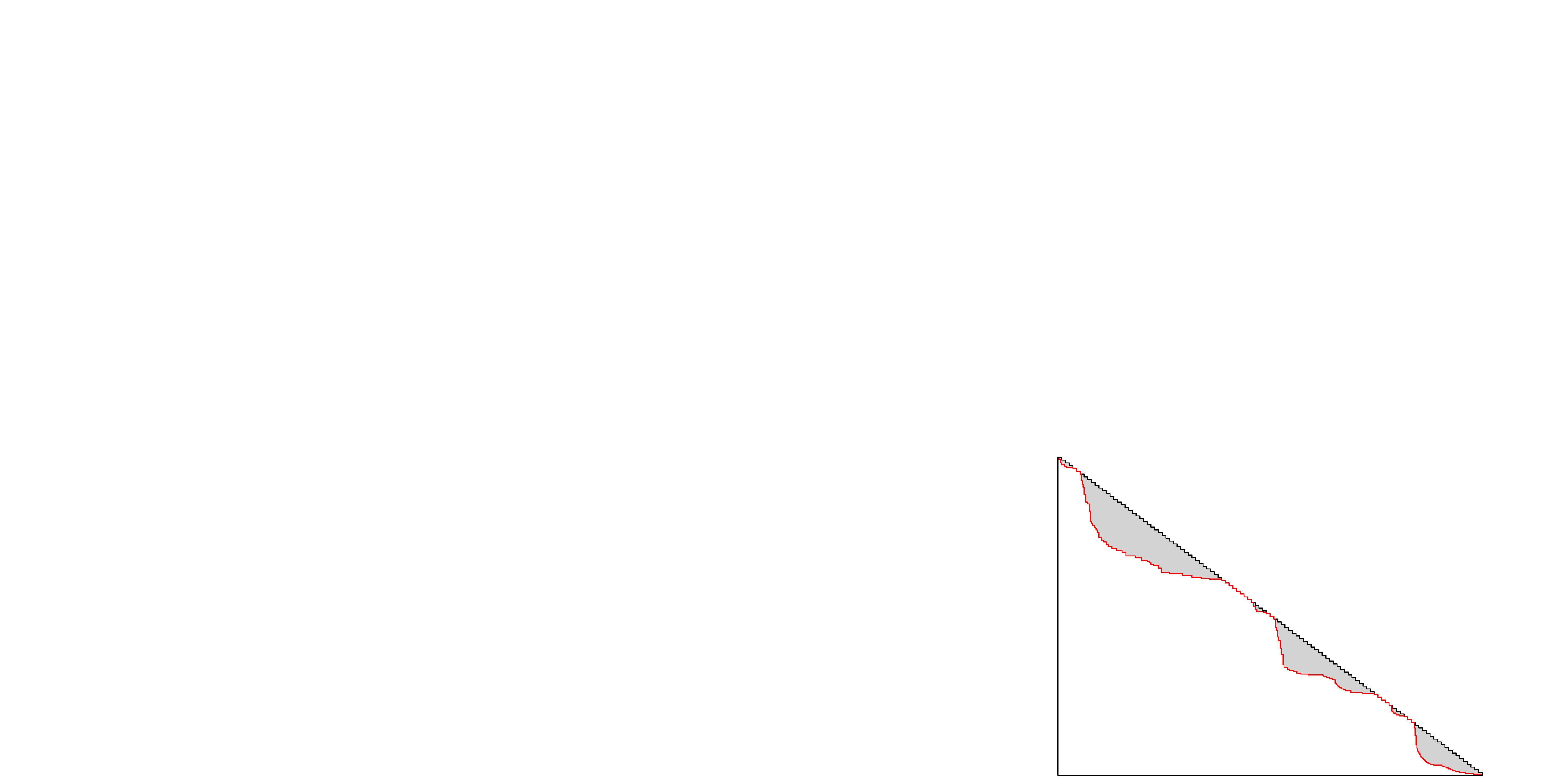} \quad \internallinenumbers
\caption{The level line (in red) of the south-east corner $X_n$: it separates all the entries smaller than $X_n$ from the other ones.
On the left: one example with the level line of $X_n=42$. One the right: the level line of $X_n$, for a very large Young tableau of size $N$ of triangular shape.
The area between this level line and the hypotenuse is the quantity $N-X_n$ analysed in Section~\ref{sec:Tableaux}.}
			\label{fig:C}
		\end{center}
\end{figure}

When $n\to\infty$, one may ask whether 
the level line $\mathcal{C}_{X_n}$ 
converges in distribution to some limiting random curve $\mathcal{C}$. 
If so, the limit laws we computed in Theorem~\ref{TheoremCorner}
would give the (renormalized) area between the macroscopic curve $\mathcal{C}$ and the hypotenuse. 
In particular, the law of $\mathcal{C}$ would depend on the microscopic details of the model, 
since we find for the renormalized area a whole family of distributions $\PGG(p,\ell,b_0,w_0)$ depending on $4$ parameters.
Besides, note that we could imagine even more general microscopic models for the same
macroscopic triangle. For instance, for a slope $-1$, starting from the south-east corner
we could have a periodic pattern 
($1$ step north, $2$ steps west, $2$ steps north, $1$ step west).
All shapes leading to the same slope 
are covered by Theorem~\ref{theo:general} (see also Example~\ref{othermodel}), 
and our method then gives similar, but distinct, limit laws.
Such models thus yield another limit law for the area, and thus another limiting random curve~$\mathcal C$. 

Note that the renormalized area between $\mathcal{C}$ and the hypotenuse does not have the same
distribution as the area below the positive part of a Brownian meander~\cite{Janson07}. 
Funnily, 
Brownian motion theory is cocking a snook at us: 
another one of Janson's papers~\cite{Janson10} studies the area below curves which are related to the Brownian supremum process and, here, 
one observes more similarities with our problem, 
as the moments of the corresponding distribution involve the gamma function. 
However, these moments grow faster than in the limit laws found in Theorem~\ref{TheoremCorner}. 
It is widely open if there is some framework unifying all these points of view.

\subsection{Factorizations of gamma distributions}\label{duality}
\newcommand*{\fnn}{j_m}

With respect to the asymptotic landscape of random Young tableaux, let us add one last result:
our results on the south-east corner directly imply 
similar results on the north-west corner.
In particular, the critical exponent for the upper left corner is $1-\delta$.
In fact, it is a nice surprise that there is even more structure: 
any periodic pattern shape is naturally associated with a family of patterns such 
that the limit laws of the south-east corners of the corresponding Young tableaux are related to each other.

First, let us describe the periodic pattern via a \emph{shape path} $(i_1,j_1; \ldots; i_m, j_m)$: it starts at the north-west corner of the tableau described by the pattern with $i_1$ right steps, followed by $j_1$ down steps, etc.; see Figure~\ref{fig:factorization}.
Then, its cyclic shift is defined by $(j_m, i_1; \ldots; j_{m-1},i_m)$.

Furthermore, this notion is equivalent to Definition~\ref{def:PeriodicYoungTableau} of a periodic tableau via the following formula:
\begin{align*}
	(\ell_1,\ldots,\ell_p) &= (\underbrace{0,\ldots,0,j_m}_{i_m\text{ elements }},\underbrace{0,\ldots,0,j_{m-1}}_{i_{m-1}\text{ elements }},\ldots,\underbrace{0,\ldots,0,j_1}_{i_1\text{ elements }}),
\end{align*} 
Then, the \emph{cyclic shift} is given by
\begin{align*}
	(\ell'_1,\ldots,\ell'_{p'}) &:= (\underbrace{0,\ldots,0,i_m}_{j_{m-1}\text{ elements }},\ldots, \underbrace{0,\ldots,0,i_{2}}_{j_{1}\text{ elements }},\underbrace{0,\ldots,0,i_1}_{j_m\text{ elements }}).
\end{align*} 
In particular we have $p'=\ell$ and $\ell'=p$.

Appending $n$ copies of the shape path $(i_1,j_1; \ldots; i_m, j_m)$ to each other corresponds to $n$ repetitions of the pattern and therefore
gives a periodic tableau.
Note that this new sequence is then equal to the shape of its associated tree similarly to Figure~\ref{fig:tabtreeurn} and in accordance with Definition~\ref{ShapeTree}.

\pagebreak 

\begin{figure}[ht!]
\setlength{\abovecaptionskip}{-1mm} \setlength{\belowcaptionskip}{-5mm}
		\begin{center}	
			\includegraphics[width=0.95\textwidth]{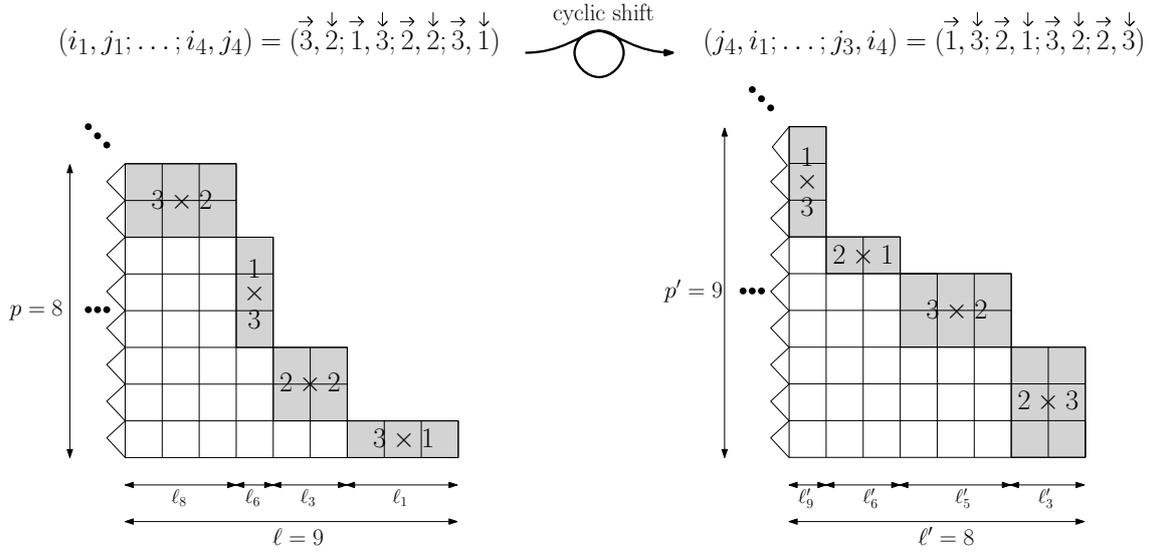} 
			\internallinenumbers
\end{center} \internallinenumbers
			\caption{Example of a cyclic shift on a periodic pattern.
On the left: one sees the shape path (3,2;1,3;2,2;3,1),
it corresponds to the pattern $(\ell_1,\dots,\ell_8)=(3,0,2,0,0,1,0,3)$ (as sequence of consecutive heights, from right to left). 
On the right: one sees its cyclic shift, which corresponds to the pattern 
 $(\ell'_1,\dots,\ell'_9)=(0,0,2,0,3,2,0,0,1)$. In grey we see the size of the sub-rectangles 
described by the shape path, i.e., the $k$-th rectangle has size $i_k \times j_k$.		 
			} 		\label{fig:factorization}
\end{figure}

\begin{proposition}[Factorization of gamma distributions]\label{prop:factorizationOfGamma}
 Let $(\ell_1,\ldots,\ell_p)$ and $(\ell'_1,\ldots,\ell'_{p'})$ be two sequences as defined above 
and let~$\fnn$ be the smallest index such that $\ell_{\fnn} >0$. 
 Let $b_0,w_0$ be two positive integers, and $Y$ and $Y'$ be 
independent random variables with respective distribution 
	$\PGGgen([\ell_1,\ldots,\ell_p];b_0,w_0)$ and $\PGGgen([\ell'_1,\ldots,\ell'_{p'}];b_0+w_0,\fnn)$
	from Theorem~\ref{theo:general}.
Then we have the factorization 
\begin{equation}\label{factor}
YY'\= \frac{1}{p+\ell} {\mathbf \Gamma}(b_0).
\end{equation}
\end{proposition}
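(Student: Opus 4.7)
The plan is to use the method of moments. Both $YY'$ and $(p+\ell)^{-1}\bGamma(b_0)$ have moments of all orders and are determined by their moments: for $\bGamma(b_0)$ this is classical, while for the product of two $\PGG$ distributions it follows from a Carleman-type estimate analogous to~\eqref{Carleman}. Hence it suffices to prove that
\begin{equation*}
\E\!\left[(YY')^{r}\right] \;=\; \E\!\left[\bigl((p+\ell)^{-1}\bGamma(b_0)\bigr)^{r}\right] \;=\; \frac{\rfac{b_0}{r}}{(p+\ell)^{r}}
\end{equation*}
for every integer $r\ge 0$.

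By independence, $\E[(YY')^{r}]=\E[Y^{r}]\E[(Y')^{r}]$, and the moments of each factor are given by formula~\eqref{MomentsOfLove} from Theorem~\ref{theo:general} (combined with the rescaling that converts urn-moments into moments of the limit $\PGG$-distribution). Writing $s_0:=b_0+w_0$, $\sigma_j:=j+\sum_{k=1}^{j}\ell_k$, and $\sigma'_j:=j+\sum_{k=1}^{j}\ell'_k$, and observing that the cyclic shift preserves $p+\ell$ (since $p'=\ell$ and $\ell'=p$) while sending $(b'_0,w'_0)=(s_0,j_m)$ to initial sum $s_0+j_m$, one obtains after multiplication
\begin{equation*}
\E[Y^{r}]\E[(Y')^{r}] \;=\; \frac{\rfac{b_0}{r}\,\rfac{s_0}{r}}{(p+\ell)^{2r}}\prod_{j=0}^{p-1}\frac{\Gamma\!\left(\tfrac{s_0+\sigma_j}{p+\ell}\right)}{\Gamma\!\left(\tfrac{s_0+r+\sigma_j}{p+\ell}\right)}\prod_{j=0}^{\ell-1}\frac{\Gamma\!\left(\tfrac{s_0+j_m+\sigma'_j}{p+\ell}\right)}{\Gamma\!\left(\tfrac{s_0+j_m+r+\sigma'_j}{p+\ell}\right)}.
\end{equation*}
Expanding $\rfac{s_0}{r}/(p+\ell)^{r}$ as a product of $p+\ell$ gamma-function ratios via the Gauss multiplication formula, the identity $\E[Y^{r}]\E[(Y')^{r}]=\rfac{b_0}{r}/(p+\ell)^{r}$ becomes equivalent to the purely combinatorial multiset equality
\begin{equation*}
\bigl\{\sigma_j\,:\,0\le j\le p-1\bigr\}\;\sqcup\;\bigl\{j_m+\sigma'_j\,:\,0\le j\le \ell-1\bigr\}\;=\;\{0,1,\ldots,p+\ell-1\}.
\end{equation*}

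This last identity is where all the content lies, and will be the main obstacle. The natural approach is to encode the first $p+\ell$ balls drawn in the original urn as a word $w\in\{B,W\}^{p+\ell}$: at urn step $k$ one appends one letter $B$ (the black ball) followed by $\ell_k$ letters $W$ (the white balls), giving the explicit form $w=B^{j_m}W^{\ell_{j_m}}BW^{\ell_{j_m+1}}\cdots BW^{\ell_p}$. By construction, $\{\sigma_j\}$ is exactly the set of positions of the $B$'s in~$w$, and the analogous statement holds for $\{\sigma'_j\}$ in the event word $w'$ of the shifted urn, so the claim reduces to showing that the $W$-positions of $w$ coincide with $\{j_m+\sigma'_j:0\le j\le\ell-1\}$. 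The key observation is that the first $j_m$ letters of $w$ are all $B$ (since $\ell_1=\cdots=\ell_{j_m-1}=0$), and that the cyclic shift at the level of shape paths (which swaps the right- and down-blocks) corresponds, at the level of event words, to swapping $B\leftrightarrow W$ in $w$ and then rotating cyclically left by $j_m$ positions; the $W$-positions of $w$ then match, after this rotation and swap, the $B$-positions of~$w'$, namely the set $\{\sigma'_j\}$ shifted back by $+j_m$. The main technicality will be to verify this word-level correspondence cleanly, which reduces to a direct bookkeeping check from the block-structure formulas relating $(\ell_1,\ldots,\ell_p)$ and $(\ell'_1,\ldots,\ell'_{p'})$ via the shape-path parameters $(i_1,j_1;\ldots;i_m,j_m)$.
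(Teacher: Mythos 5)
Your proposal takes the same route as the paper's (very terse) proof: check the $r$-th moments via~\eqref{MomentsOfLove}, conclude by Carleman's criterion, and make everything collapse through the Gauss multiplication formula applied to $\rfac{s_0}{r}/(p+\ell)^r$. Your added value is to make explicit what the paper hides behind ``after simplification'': the reduction to the multiset identity $\{\sigma_j\}\sqcup\{\fnn+\sigma'_j\}=\{0,\ldots,p+\ell-1\}$ and its proof via the ``swap $B\leftrightarrow W$, then rotate left by $\fnn$'' correspondence between the event words $w$ and $w'$; I checked this correspondence on the pattern of Figure~\ref{fig:factorization} and on small cases, and it is correct. One concrete warning for your final bookkeeping step: the displayed block-structure formula in Section~\ref{duality} has the roles of the $i_k$ and $j_k$ interchanged relative to Figure~\ref{fig:factorization} (it would give $p=\sum i_k=9$ for the figure's pattern, whose caption has $p=8$), and if you run the verification with that formula as literally printed, the multiset identity \emph{fails} (e.g.\ for the pattern $(2,0,1)$ it would pair $\{0,3,4\}$ with $\{1,3,4\}$ instead of the correct $\{1,2,5\}$); the correct convention is the figure's, i.e.\ $(\ell_1,\ldots,\ell_p)=(0^{j_m-1},i_m,\ldots,0^{j_1-1},i_1)$, under which $\fnn$ is indeed the smallest index with $\ell_{\fnn}>0$ and your word correspondence goes through.
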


\begin{proof}
The equality in distribution is obtained by checking 
the equality of the $r$-th moments and then applying Carleman's theorem:
using Formula~\eqref{m_r} for the moments of $\PGG$
indeed leads (after simplification via the Gauss multiplication formula on the gamma function) to
$\E(Y^r)\E((Y')^r)=\frac{1}{(p+\ell)^r}\E( Z^r)$ where $Z$ is a random variable distributed according to ${\mathbf \Gamma}(b_0)$.
\end{proof}
 
 \begin{remark}[A duality between corners]
One case of special interest is the case of Young tableaux 
having the mirror symmetry
	$(\ell_{\fnn},\ldots,\ell_p) = (\ell_{p},\ldots,\ell_{\fnn})$,
where $\fnn$ is again the smallest index such that $\ell_{\fnn}>0$. Indeed, $Y$ and $Y'$ then correspond to the limit laws for the south-east (respectively north-west) corner of the same 
tableau. In this case, we can think of~\eqref{factor} as expressing a kind of duality between the corners of the tableau.
\end{remark}

Similar factorizations of the exponential law, which is a particular case of the gamma distribution, 
have appeared recently in relation with functionals of L\'evy processes, following~\cite{BertoinYor01}. 
These formulas are also some probabilistic echoes of identities satisfied by the gamma \textit{function}.

\pagebreak

\noindent We can mention one last result in this direction: indeed, 
Theorem~\ref{TheoremCornerGen} used 
for the Young tableau with periodic pattern shape $(\ell_1,\ldots, \ell_p ; 2n)$ 
and the (same) Young tableau 
with periodic pattern shape $(\ell_1,\dots,\ell_p,\ell_1,\dots,\ell_p; n)$ 
leads to two different closed forms of the same limit distribution, and one also gets other closed forms if one repeats $m$ times the pattern $(\ell_1,\ldots, \ell_p)$.
E.g., if one takes all the $\ell_i's$ equal to~$1$, this gives 
\begin{align}
\GG(3,2)&= \sqrt{2} \GG(3,4)\GG(5,4),
\end{align}
and, more generally, 
\begin{align*}
	\GG(s_0+1,2)&= \sqrt{m} \prod_{k=1}^m \GG(s_0+2k-1,2m).
\end{align*}

Using the fact that $\GG(a,1/b)=\bGamma(ab)^{b}$, we can rephrase this identity in terms of powers of $\bGamma$ distributions 
(the notation $\bGamma$, in bold, stands for the distribution, while $\Gamma$ stands for the function; below, we have only occurences in bold):
\begin{align}\bGamma\left(\frac{s_0+1}{2}\right)^{\frac{1}{2}}
&= \sqrt{m} \prod_{k=1}^m \bGamma\left(\frac{s_0+2k-1}{2m}\right)^\frac{1}{2m}.
\end{align}
With $x:=\frac{s_0+1}{2m}$, one gets the following formula equivalent to the Gauss multiplication formula: 
\begin{align}
\bGamma\left(mx\right)^{m}
&= m^m \prod_{k=1}^m \bGamma\left(x+\frac{k-1}{m}\right).
\end{align}

Choosing other values for the $\ell_i$'s leads
to more identities: 
\begin{align*}
 	\prod_{\substack{i=1 \\i\neq\ell_1+\dots+\ell_j+j \\\text{with } 1\leq j \leq p-1}}^{p+\ell-1} \!\!\!\!\! \GG(s_0+i,p+\ell) 
&= m^{1-\delta} \!\!\!\!\! \prod_{\substack{i=1 \\i\neq\ell'_1+\dots+\ell'_j+j \\\text{with } 1\leq j \leq mp-1}}^{m(p+\ell)-1} \!\!\!\!\! \GG(s_0+i,m(p+\ell)).    
\end{align*}
It is pleasant that it is possible to reverse engineer such identities and 
thus obtain a probabilistic proof of the Gauss multiplication formula (see~\cite{Dufresne10}).

\smallskip

This ends our journey in the realm of urns and Young tableaux; in
the next final section, we conclude with a few words 
on possible extensions of the methods used in this article.

\begin{flushright}
\textit{``A method is a trick used twice.''}\\
{\small George P\'olya (1887--1985)}\\
\qquad \\
\textit{``After this the reader who wishes to do so will have no difficulty in developing the theory of \emph{urns}\footnote{The reader is invited to compare with the original citations 
of P\'olya and Young in~\cite[p.~208]{Polya57} and~\cite[p.~366]{GraceYoung03}.}
when they are regarded as differential operators.''}\\
{\small Alfred Young (1873--1940)}
\end{flushright}

\section{Conclusion and further work}\label{Sec6}

In this article, we introduced P\'olya urns with periodic replacements 
and showed that they can be exactly solved with generating function techniques.
The initial partial differential equation encoding their dynamics leads to D-finite moment generating functions,
which we identify as the signature of a generalized gamma product distribution.
It is also pleasant that it finds applications for some 
statistics of Young tableaux.

Many extensions of this work are possible: 
\begin{itemize}
\item The \textbf{density method} which we introduced in~\cite{Marchal18,BanderierMarchalWallner18a} 
can be used to analyse other combinatorial structures, like we did already on permutations, trees, Young tableaux, and Young tableaux with local decreases.
In fact, the idea to use integral representations of order polytope volumes
		in order to enumerate poset structures is quite natural,
		and was used e.g.~in~\cite{Pak01,Elkies03,BaryshnikovRomik10}.
		Our approach, which uses this idea while following at the same time the densities of some parameter,
 allows us to solve both enumeration and random generation. 
We hope that some readers will give it a try on their favourite poset structure!
\item In~\cite{FlajoletGabarroPekari05}, Flajolet et~al.~analyse an urn model which leads 
to a remarkably simple factorization for the history generating function; see Theorem~1 therein and also Theorem~1 in \cite{FlajoletDumasPuyhaubert06}. 
This greatly helps them to perform the asymptotic analysis via \textbf{analytic combinatorics} tools.
Our model does not possess such a factorization; this makes the proofs more involved.
It is nice that our new approach remains generic and can be applied to more general periodic urn models 
(with weights, negative entries, random entries, unbalanced schemes, triangular urns with more colours, multiple drawings, \dots).
It is a full programme to investigate these variants, in order to get a better characterization of the zoo of special functions 
(combination of generalized hypergeometric, etc.) and distributions occurring for the different models.

\item There exists a theory of elimination for partial differential equations, 
chiefly developed in the 1920's by Janet, Riquier, and Thomas (see e.g.~\cite{GerdLangeHegermannRobertz2019,BoulierLemairePoteauxMorenomaza2019} for modern approaches). 
In our case, these approaches however fail to get the linear ordinary differential equations satisfied 
by our generating functions. It is thus an interesting challenge for \textbf{computer algebra} to get an efficient 
algorithm taking as input the PDE and its boundary conditions, and giving as output the D-finite equation (if any). 
Is it possible to extend holonomy theory beyond its apparent linear frontiers? (See the last part of~\cite{PetkovsekWilfZeilberger96}.) 
Also, as an extension of Remark~\ref{remark:FlajoletLafforgue}, it is natural to ask: 
is it possible to extend the work of Flajolet and Lafforgue to the full class of D-finite equations, thus exhibiting new universal limit laws like we did here?

\item Our approach can also be used to analyse the fluctuations of further cells in a random Young tableau.
It remains a challenge to understand the full \textbf{asymptotic landscape of surfaces} associated with \textbf{random Young tableaux}, 
even if it could be globally expected that they behave like a Gaussian free field, 
like many other random surfaces~\cite{Kenyon01}. 
Understanding the fluctuations and the universality of the critical exponent at the 
corner could help to get a more global picture. 
The Arctic circle phenomenon (see~\cite{Romik12}) and 
the study of the level lines $\mathcal{C}$ in random Young tableaux and their possible limits in distribution, as discussed in 
Section~\ref{slope}, seems to be an interesting but very challenging problem.
\end{itemize}

{\bf Acknowledgements.} Let us thank Vadim Gorin, Markus Kuba, C\'ecile Mailler, 
and Henning Sulzbach for kind exchanges on their 
work~\cite{GorinRahman18,KubaSulzbach17,LasmarMaillerSelmi18} and on related questions.
We also thank the referees of the preliminary AofA'2018 version~\cite{BanderierMarchalWallner18}
and of this Annals of Probability version for their careful reading and suggestions which improved the quality of our paper.
We also thank the organizers (Igor Pak, Alejandro Morales, Greta Panova, and Dan Romik) 
of the meeting \textit{Asymptotic Algebraic Combinatorics} (Banff, 11--15 March 2019)
where we got the opportunity to present this work.

\let\OLDthebibliography\thebibliography
\longversion{\renewcommand\thebibliography[1]{
  \OLDthebibliography{#1}
  \setlength{\parskip}{0pt}
  \setlength{\itemsep}{5pt plus 0.9ex}
}}{
\renewcommand\thebibliography[1]{
  \OLDthebibliography{#1}
  \setlength{\parskip}{0pt}
  \setlength{\itemsep}{3pt plus 0.9ex}
}}

\begingroup
\renewcommand{\specialsection}[2]{}
\smallskip
\begin{center}
\MakeUppercase\refname
\end{center}
\vspace{-2mm}
{\scriptsize
The links written as [doi] may require a subscription. 
Whenever available, we additionally give a link towards a free version of the article: just click on the title! 
}

\bibliographystyle{cyrbiburl}
\longversion{\linespread{.98}\selectfont\bibliography{../aofa2018_urns}}
{\linespread{0.98}

\bibliography{aofa2018_urns_short}}
\endgroup

\end{document}